\documentclass[11pt,a4paper]{article}%
\usepackage[centertags]{amsmath}
\usepackage{amsfonts}
\usepackage{amssymb}
\usepackage{amsthm}
\usepackage{epsfig}
\usepackage{setspace}
\usepackage{ae}
\usepackage{eucal}
\usepackage[usenames]{color}%
\setcounter{MaxMatrixCols}{30}%
\usepackage{graphicx}

\theoremstyle{plain}
\newtheorem{thm}{Theorem}[section]
\newtheorem{lem}[thm]{Lemma}
\newtheorem{cor}[thm]{Corollary}

\theoremstyle{definition}
\newtheorem{defi}[thm]{Definition}

\theoremstyle{remark}

\newtheorem{rem}[thm]{Remark}

\begin{document}

\title{On global schemes for highly degenerate Navier Stokes equation systems}
\author{J\"org Kampen }
\maketitle

\begin{abstract} 
First order semi-linear coupling of scalar hypoelliptic equations of second order leads to a natural class of incompressible Navier Stokes equation systems, which encompasses systems with variable viscosity and essentially Navier Stokes equation systems on manifolds. We introduce a controlled global solution scheme which is based on a) local contraction results in function spaces with polynomial decay of some order at spatial infinity related to the polynomial growth factors of standard a priori estimates of densities and their derivatives for hypoelliptic diffusions of H\"{o}rmander type (cf. \cite{KS}), and on b) a controlled equation system where we discuss variations of the scheme we considered in \cite{KB3}. Global regularity of the controlled velocity functions and the control function is obtained. We supplement our notes on global bounds of the Leray projection term and related controlled Navier Stokes equation schemes in \cite{KNS,K3, KB2, KB3, KIF2}. Some arguments for linear upper bounds for the control function are added. 
\end{abstract}


2000 Mathematics Subject Classification. 35K40, 35Q30.
\section{Introduction}
The classical incompressible Navier-Stokes equation in $n$-dimensional Euclidean space for the velocity $\mathbf{v}=(v_1,\cdots ,v_n)^T$ and the scalar pressure $p$, with initial data $\mathbf{h}=(h_1,\cdots ,h_n)^T$ and with viscosity $\nu>0$, i.e., the equation
\begin{equation}\label{nav}
\left\lbrace\begin{array}{ll}
    \frac{\partial\mathbf{v}}{\partial t}-\nu \Delta \mathbf{v}+ (\mathbf{v} \cdot \nabla) \mathbf{v} = - \nabla p, \\
    \\
\nabla \cdot \mathbf{v} = 0,~~~~t\geq 0,~~x\in{\mathbb R}^n,\\
\\
\mathbf{v}(0,.)=\mathbf{h},
\end{array}\right.
\end{equation}
has its special form due to Galilei invariance in flat space. As outlined in \cite{BS} this symmetry fixes the highly constrained structure of the equation, especially the coefficient of the nonlinear convection term. Although there are rather natural generalisations of the Navier Stokes equation model on Riemannian manifolds, there is some freedom of choice concerning the description of the coupling of the velocity field to the curvature in such cases, where this choice can be determined by other types of symmetries which fit with the manifold considered, e.g. Killing symmetry for spheres. Here we just note that these  and other phenomena, such as the fact that realistic modelling of fluids sometimes requires variable viscosity, motivate generalisations of the classical incompressible Navier-Stokes equation. We are concerned with such a generalisation where we start with the classical Navier Stokes equation in its equivalent Leray projection form, i.e., the equation system
\begin{equation}\label{Navleray}
\left\lbrace \begin{array}{ll}
\frac{\partial v_i}{\partial t}-\nu\sum_{j=1}^n \frac{\partial^2 v_i}{\partial x_j^2} 
+\sum_{j=1}^n v_j\frac{\partial v_i}{\partial x_j}=\\
\\ \hspace{1cm}\int_{{\mathbb R}^n}\left( \frac{\partial}{\partial x_i}K_n(x-y)\right) \sum_{j,k=1}^n\left( \frac{\partial v_k}{\partial x_j}\frac{\partial v_j}{\partial x_k}\right) (t,y)dy,\\
\\
\mathbf{v}(0,.)=\mathbf{h}.
\end{array}\right.
\end{equation}
Recall that the restriction of incompressibility reduces to the condition of incompressibility of the initial data for (\ref{Navleray}), i.e., the condition
\begin{equation}
\mbox{div}~\mathbf{v}(0,.)=\mbox{div}~\mathbf{h}=0.
\end{equation}
Recall furthermore that the pressure in (\ref{nav}) is determined by the solution of 
(\ref{Navleray}) in the form
\begin{equation}\label{pp}
p(t,x)=-\int_{{\mathbb R}^n}K_n(x-y)\sum_{j,k=1}^n\left( \frac{\partial v_k}{\partial x_j}\frac{\partial v_j}{\partial x_k}\right) (t,y)dy,
\end{equation}
where 
\begin{equation}
K_n(x):=\left\lbrace \begin{array}{ll}\frac{1}{2\pi}\ln |x|,~~\mbox{if}~~n=2,\\
\\ \frac{1}{(2-n)\omega_n}|x|^{2-n},~~\mbox{if}~~n\geq 3\end{array}\right.
\end{equation}
is the Poisson kernel. We are interested in $n\geq 3$, although our considerations may be applied in the case $n=2$ with modifications related to the different growth behavior and the different singularity of the Laplacian kernel and its first order derivatives in that case. We mention that $|.|$ denotes the Euclidean norm and $\omega_n$ denotes the area of the unit $n$-sphere. Next we shall generalize the Navier-Stokes equation system in its Leray projection form having in mind the global scheme we considered in \cite{KB2,KB3, KNS, K3}. We recall the main idea of the scheme in its most simple form as it was discussed in \cite{KB3} in order to indicate some differences to the generalized systems considered here.
One difference is that the equation systems considered include systems with highly degenerated second order coefficients. Concerning the solution scheme, the main difference is then related to an additional spatial polynomial growth factor in the H\"{o}rmander or Kusuoka-Stroock estimates of the densities of approximating equations (cf. our discussion below).
The global scheme considered in \cite{KB3} is based on local contraction results in strong norms and on the choice of dynamically defined control functions which ensure that spatial polynomial decay of a certain order is inherited from time step to time step and which helps in order to show that the solution is bounded in strong norms over time. For a linear upper bound of the Leray projection term we need less, as we indicated in \cite{KB3} and argue here more specifically. Let us reconsider these ideas from a slightly different point of view. We assume step size one in transformed coordinates by the time transformation $t=\rho_l \tau$, where the time step size $\rho_l$ will be small in general. The subscript $l$ in $\rho_l$ indicates that the time step size may be dependent on the time step number $l$. However, there are some obvious restrictions concerning the dependence on the time step number $l$ if we want to have a global scheme. For some versions of our scheme  with more sophisticated control functions even a time step size with an uniform lower bound can be chosen. Having computed the functions $v^{\rho,l-1}_i(l-1,.)$ 
for $1\leq i\leq n$ and $l\geq 1$, where $v^{\rho,0}_i(l-1,.):=h_i(.)$ for $1\leq i\leq n$, we consider the Leray projection form of the incompressible Navier-Stokes equation at each time step $l\geq 1$ on the domain $[l-1,l]\times {\mathbb R}^n$, i.e., the equation
\begin{equation}\label{qparasystnav2l}
\left\lbrace \begin{array}{ll}
\frac{\partial v^{\rho,l}_i}{\partial \tau}-\rho_l\nu\sum_{j=1}^n \frac{\partial^2 v^{\rho,l}_i}{\partial x_j^2} 
+\rho_l\sum_{j=1}^n v^{\rho,l}_j\frac{\partial v^{\rho,l}_i}{\partial x_j}\\
\\
=\rho_l\int_{{\mathbb R}^n}\sum_{i,j=1}^n \left( \frac{\partial v^{\rho,l}_i}{\partial x_j}\frac{\partial v^{\rho,l}_j}{\partial x_i}\right) (\tau,y)\frac{\partial}{\partial x_i}K_n(x-y)dy,\\
\\
\mathbf{v}^{\rho,l}(l-1,.)=\mathbf{v}^{\rho,l-1}(l-1,.).
\end{array}\right.
\end{equation}  
In order to have a global scheme the time step size should be at least $\rho_l\sim\frac{1}{l}$. Some polynomial decay assumption and regularity assumption on the data is useful in order to prove that the scheme is global. For $n\geq 3$ and for the classical model with constant viscosity (or even for classical Navier Stokes equations on manifolds) a condition of form $v^{\rho,l-1}_i(l-1,.)\in H^m\cap C^m$ for an integer $m$ with $m>\frac{1}{2}n$ is an appropriate choice in order to prove convergence of the local scheme to a classical solution via local contraction estimates. For the generalized highly degenerate model of this paper with diffusions satisfying a H\"{o}rmander condition we shall need stronger conditions of polynomial decay. The reasoning is quite similar in both cases. For the generalisations considered in this paper, polynomial decay assumptions (along with regularity assumptions) are very useful as they can be combined with Kusuoka-Stroock estimates for the densities related to the part of the operator which satisfies the H\"{o}rmander condition. Let us consider the simple Navier-Stokes equation model first.
The local solution of the incompressible Navier-Stokes equation in Leray-projection form is constructed via a functional series
\begin{equation}\label{funciv}
v^{\rho ,l }_i=v^{\rho,l-1}_i+\sum_{k=1}^{\infty} \delta v^{\rho, l,k}_i, 1\leq i\leq n,
\end{equation} 
where $v^{\rho, l,0}_i:=v^{\rho,l-1}_i$, and where for the most simple scheme $v^{\rho,l,1}_i$ solves 
 \begin{equation}\label{scalparasystlin10v}
\left\lbrace \begin{array}{ll}
\frac{\partial v^{\rho,l,1}_i}{\partial \tau}-\rho_l \nu\sum_{j=1}^n \frac{\partial^2 v^{\rho,l,1}_i}{\partial x_j^2}=\\
\\ 
-\rho_l\sum_{j=1}^n v^{\rho,l-1}_j(l-1,.)\frac{\partial v^{\rho,l-1}_i}{\partial x_j}\\
\\
+\rho_l\int_{{\mathbb R}^n}\sum_{j,m=1}^n \left( \frac{\partial v^{\rho,l-1}_j}{\partial x_m}\frac{\partial v^{\rho,l-1}_m}{\partial x_j}\right) (l-1,y)\frac{\partial}{\partial x_i}K_n(x-y)dy,\\
\\
{\bf v}^{\rho,l,1}(l-1,.)={\bf v}^{l-1}(l-1,.).
\end{array}\right.
\end{equation} 
Furthermore, the functional increments $\delta v^{\rho,k+1,l}_i=v^{\rho,k+1,l}_i-v^{\rho,k,l}_i,~1\leq i\leq n$ solve
\begin{equation}\label{deltaurhok0}
\left\lbrace \begin{array}{ll}
\frac{\partial \delta v^{\rho,k+1,l}_i}{\partial \tau}-\rho_l \sum_{j=1}^n \frac{\partial^2 \delta v^{\rho,k+1,l}_i}{\partial x_j^2}\\
\\
=-\rho_l\sum_{j=1}^n v^{\rho,k-1,l}_j\frac{\partial \delta v^{\rho,k,l}_i}{\partial x_j}
-\rho_l\sum_j\delta v^{\rho,k,l}_j\frac{\partial v^{\rho,k,l}}{\partial x_j}+\\ 
\\
\rho_l\int_{{\mathbb R}^n}K_{n,i}(x-y){\Big (} \left( \sum_{j,m=1}^n\left( v^{\rho,k,l}_{m,j}+v^{\rho,k-1,l}_{m,j}\right)(\tau,y) \right)  \delta v^{\rho,k,l}_{j,m}(\tau,y) {\Big)}dy,\\
\\
\mathbf{\delta v}^{\rho,k+1,l}(l-1,.)= 0,
\end{array}\right.
\end{equation}
and where $\delta v^{\rho,1,l}_j= v^{\rho,1,l}_j-v^{\rho,0,l}:=v^{\rho,1,l}_j-v^{\rho,l-1}_i(l-1,.)$. Note that $\delta v^{\rho,1,1}_j= v^{\rho,1,1}_j-h_j$ at the first time-step.
In \cite{KB3} we have shown that the functional series $\left( v^{\rho,l,k}_i\right)_{k\geq 1}$ converges to a local solution for an appropriate choice of the time step size $\rho_l$ in strong $C^0\left( [l-1,l], H^{m}\right)$, or $C^1\left( [l-1,l], H^{m}\right)$-norms via contraction estimates (supremum with respect to time). These contraction estimates can be based on Gaussian a priori estimates for densities, Young inequalities, Fourier transforms, and standard estimates for products in $H^m$. 

Now let us reconsider controlled Navier-Stokes equation systems, where we consider a variation of the scheme in (cf. \cite{KB3}) from a slightly different point of view in preparation of natural generalisations aimed at in this paper.
For a regular control function $\mathbf{r}=\left(r_1,\cdots ,r_n \right)^T:[0,\infty)\times {\mathbb R}^n\rightarrow {\mathbb R}^n$ the equation for the controlled velocity function 
\begin{equation}
\mathbf{v}^{r}:=\mathbf{v}+\mathbf{r},
\end{equation}
in original coordinates becomes  
\begin{equation}\label{Navleray2}
\left\lbrace \begin{array}{ll}
\frac{\partial v^r_i}{\partial t}-\nu\sum_{j=1}^n \frac{\partial^2 v^r_i}{\partial x_j^2} 
+\sum_{j=1}^n v^r_j\frac{\partial v^r_i}{\partial x_j}=
+\frac{\partial r_i}{\partial t}\\
\\
-\nu\sum_{j=1}^n \frac{\partial^2 r_i}{\partial x_j^2} 
+\sum_{j=1}^n r_j\frac{\partial v^r_i}{\partial x_j}+\sum_{j=1}^n v^r_j\frac{\partial r_i}{\partial x_j}-\sum_{j=1}^n r_j\frac{\partial r_i}{\partial x_j}
\\
\\+\int_{{\mathbb R}^n}\left( \frac{\partial}{\partial x_i}K_n(x-y)\right) \sum_{j,k=1}^n\left( v^r_{k,j}v^r_{j,k}\right) (t,y)dy\\
\\
-2\int_{{\mathbb R}^n}\left( \frac{\partial}{\partial x_i}K_n(x-y)\right) \sum_{j,k=1}^n\left( v^r_{k,j}r_{j,k}\right) (t,y)dy\\
\\
-\int_{{\mathbb R}^n}\left( \frac{\partial}{\partial x_i}K_n(x-y)\right) \sum_{j,k=1}^n\left( r_{k,j}r_{j,k}\right) (t,y)dy,\\
\\
\mathbf{v}^r(0,.)=\mathbf{h}.
\end{array}\right.
\end{equation}
This equation for $v^r_i\in C^{1,2}\left(\left[0,\infty\right)\times {\mathbb R}^n\right) , 1\leq i\leq n$ may be solved for an appropriate control function space $R$ such that the summand  $$v_i\in C^{1,2}\left(\left[0,\infty\right)\times {\mathbb R}^n\right)$$ for $1\leq i\leq n$ is a global classical solution of the incompressible Navier Stokes equation. The idea is to choose at the beginning of each time step $l$ control functions $r^l_i:[l-1,l]\times {\mathbb R}^n\rightarrow {\mathbb R},~1\leq i\leq n$ such that the controlled function becomes bounded on this domain while - in the most simple case - the increment of the control function is bounded by a constant which is fixed, i.e., especially independent of the time step number $l$. This leads to a global linear bound of the control functions and a global bound of the controlled velocity functions $v^{r}_i$. Even in this most simple case we can then conclude that there exist global classical solutions.

The construction is done time-step by time step on domains $\left[l-1,l\right]\times {\mathbb R}^n,~l\geq 1$, where for $1\leq i\leq n$ the restriction of the control function component $r_i$ to $\left[l-1,l\right]\times {\mathbb R}^n$ is denoted by 
$r^l_i$. The local functions $v^{r,\rho,l}_i$ with $v^{r,\rho,l}_i(\tau,x)=v^{r,l}_i(t,x)$ are defined inductively on $\left[l-1,l\right]\times {\mathbb R}^n$ along with the control function $r^l$ via the Cauchy problem for
\begin{equation}
\mathbf{v}^{r,\rho,l}:=\mathbf{v}^{\rho,l}+\mathbf{r}^l.
\end{equation}
We mention here that the control functions $r^l_i$ are chosen at every time step $l\geq 1$. This means that we can analyse the local behavior by analysis of functional sequences where the only reference to the control function is with respect to the initial data $v^{r,\rho,l-1}_i(l-1,.)$. 

Here, $\mathbf{v}^{\rho,l}=\left(v^{\rho,l}_1,\cdots ,v^{\rho,l}_n \right)^T$ is the time transformed solution of the incompressible Navier Stokes equation (in Leray projection form) restricted to the domain
$\left[l-1,l\right]\times {\mathbb R}^n$. Note that the local solution function at time-step $l\geq 1$, i.e., 
\begin{equation}
\mathbf{v}^{r,\rho,l}=\left(v^{\rho,l}_1+r^l_1,\cdots v^{\rho,l}_n+r^l_n\right)^T,
\end{equation}
satisfies the equation   
\begin{equation}\label{Navleraycontrolledlint}
\left\lbrace \begin{array}{ll}
\frac{\partial v^{r,\rho,l}_i}{\partial \tau}-\rho_l\nu\sum_{j=1}^n \frac{\partial^2 v^{r,\rho,l}_i}{\partial x_j^2} 
+\rho_l\sum_{j=1}^n v^{r,\rho,l}_j\frac{\partial v^{r,\rho,l}_i}{\partial x_j}=\\
\\
\frac{\partial r^l_i}{\partial \tau}-\rho_l\nu\sum_{j=1}^n \frac{\partial^2 r^l_i}{\partial x_j^2} 
+\rho_l\sum_{j=1}^n r^l_j\frac{\partial v^{r,\rho,l}_i}{\partial x_j}\\
\\
+\rho_l\sum_{j=1}^n v^{r,\rho,l}_j\frac{\partial r^l_i}{\partial x_j}-\rho_l\sum_{j=1}^n r^l_j\frac{\partial r^l_i}{\partial x_j}
\\
\\+\rho_l\int_{{\mathbb R}^n}\left( \frac{\partial}{\partial x_i}K_n(x-y)\right) \sum_{j,k=1}^n\left( \frac{\partial v^{r,\rho,l}_k}{\partial x_j}\frac{\partial v^{r,\rho,l}_j}{\partial x_k}\right) (\tau,y)dy\\
\\
-2\rho_l\int_{{\mathbb R}^n}\left( \frac{\partial}{\partial x_i}K_n(x-y)\right) \sum_{j,k=1}^n\left( \frac{\partial v^{r,\rho,l}_k}{\partial x_j}\frac{\partial r^l_j}{\partial x_k}\right) (\tau,y)dy\\
\\
-\rho_l\int_{{\mathbb R}^n}\left( \frac{\partial}{\partial x_i}K_n(x-y)\right) \sum_{j,k=1}^n\left( \frac{\partial r^l_k}{\partial x_j}\frac{\partial r^l_j}{\partial x_k}\right) (\tau,y)dy,\\
\\
\mathbf{v}^{r,\rho,l}(l-1,.)=\mathbf{v}^{r,\rho,l-1}(l-1,.).
\end{array}\right.
\end{equation}

At each time step $l\geq 1$ the functions $v^{r,\rho,l-1}_i(l-1,.)$ and $r^{l-1}_i(l-1,.)$ are defined in a regular space by inductive assumption, and we are free to choose the functions $r^l_i$ at the next time step $l\geq 1$ within a regular function space $R$ with the restriction that $r^l_i(l-1,.)=r^{l-1}_i(l-1,.)$. 
There are several possibilities for defining the control functions $r^l_i$ in order to get an upper bound of the Leray projection term. We shall construct a bounded solution with sophisticated control functions and linearly bounded solutions with less sophisticated control functions. However, concerning simplicity of the control function leads to a linear bound with respect to time for the velocity functions. Note that this is sufficient for existence of global classical solutions. The price to pay for the simplicity of the control function is that we need a refinement of the contraction, and we shall consider alternatives where this is not the case. We have two types of simple control functions. One simple type of control functions is based on the following idea. Assume inductively that we can realize a certain growth behavior with respect to time up to the time step number $l-1$ of the form 
\begin{equation}
D^{\alpha}_xv^{r,\rho,l-1}_i(l-1,.)\sim \sqrt{l-1} \mbox{ for }|\alpha|\leq m
\end{equation}
at the beginning of some time step $l\geq 1$ (inductive assumption). Now assume that we have constructed the control function up to time $l-1\geq 0$ such that
\begin{equation}
v^{r^{l-1},\rho,l}_i(l-1,.):=v^{\rho,l}_i(l-1,.)+r^{l-1}_i(l-1,.)
\end{equation}
are the initial data of time step $l\geq 1$. Let $v^{r^{l-1},\rho,l,1}_i$ and $\delta v^{r^{l-1},\rho,l,k}_i$ be solutions of the uncontrolled equations (\ref{scalparasystlin10v}) with data $v^{r^{l-1},\rho,l}_i(l-1,.)$ and (\ref{deltaurhok0}).
The local contraction result
\begin{equation}
{\big |}\delta v^{r^{l-1},\rho,l,k}_i{\big |}_{C^0\left( [l-1,l]\times H^{m}\right) }\leq \frac{1}{2}{\big |}\delta v^{r^{l-1},\rho,l,k-1}_i{\big |}_{C^0\left( [l-1,l]\times H^{m}\right)}
\end{equation}
for $m\geq 2$, and for all $1\leq i\leq n$ ensures (as can be shown easily) that the limit 
\begin{equation}\label{funciv2}
\mathbf{v}^{r^{l-1},\rho ,l }=\mathbf{v}^{r^{l-1},\rho,l-1}+\sum_{k=1}^{\infty} \delta \mathbf{v}^{r^{l-1},\rho,l,k}
\end{equation} 
of the corresponding local functional series represents a local solution of the incompressible Navier Stokes equation on the domain $[l-1,l]\times {\mathbb R}^n$. For a time step size $\rho_l$ of order
\begin{equation}
\rho_l\sim \frac{1}{l},
\end{equation}
we shall reconsider below the argument that a global scheme can be defined. For appropriate inductively defined control functions classical representations of the linear approximations $v^{r,\rho,l,1}_i$ and of the increments $\delta v^{r,\rho,l,k}_i,~k\geq 1$ in terms of convolutions with the fundamental solution of a heat equation with viscosity $\rho_l$ show that
\begin{equation}\label{firstincrg0}
D^{\alpha}_x\delta v^{r,\rho,l,1}_i\sim 1,~\mbox{ for }|\alpha|\leq m
\end{equation}
and 
\begin{equation}\label{secondincrgr0}
D^{\alpha}_x\delta v^{r,\rho,l,k}_i\sim \left( \frac{1}{\sqrt{l}}\right)^{k-1},~\mbox{ for }|\alpha|\leq m \mbox{ and }~k\geq 2.
\end{equation}
We shall consider details of the proof below, even in a more general situation. Our choice of the control functions $r^l_i$ is related to the observations (\ref{firstincrg0}) and (\ref{secondincrgr0}), and motivate our definition of a control functions $r^l_i$ (or a part of the control function) in \cite{KB1} and \cite{KB2}, where we defined
\begin{equation}\label{deltarl0}
\delta r^l_i=r^l_i-r^{l-1}_i(l-1,.)=-\delta v^{r,\rho,l,1}_i.
\end{equation}
This implies that we have
\begin{equation}\label{funciv3}
\begin{array}{ll}
v^{r,\rho ,l }_i=v^{r^{l-1},\rho,l-1}_i+\delta r^l_i+\sum_{k=1}^{\infty} \delta v^{r^{l-1},\rho, l,k}_i\\
\\
=v^{r,\rho,l-1}_i+\sum_{k=2}^{\infty} \delta v^{r^{l-1},\rho, l,k}_i
,~1\leq i\leq n.
\end{array}
\end{equation}
Note: since we choose $\delta r^l_i$ once at time step $l\geq 1$ before we compute the higher order term we may compute $v^{r,\rho ,l }_i$ as follows. At time step $l\geq 1$ we start with the the data $v^{r,\rho,l-1}_i$ and determine functions $v^{\rho,l,1}_i$ 
\begin{equation}\label{scalparasystlin10v23}
\left\lbrace \begin{array}{ll}
\frac{\partial v^{\rho,l,1}_i}{\partial \tau}-\rho_l \nu\sum_{j=1}^n \frac{\partial^2 v^{\rho,l,1}_i}{\partial x_j^2}=\\
\\ 
-\rho_l\sum_{j=1}^n v^{\rho,l-1}_j(l-1,.)\frac{\partial v^{\rho,l-1}_i}{\partial x_j}\\
\\
+\rho_l\int_{{\mathbb R}^n}\sum_{j,m=1}^n \left( \frac{\partial v^{\rho,l-1}_j}{\partial x_m}\frac{\partial v^{\rho,l-1}_m}{\partial x_j}\right) (l-1,y)\frac{\partial}{\partial x_i}K_n(x-y)dy,\\
\\
{\bf v}^{\rho,l,1}(l-1,.)={\bf v}^{r,l-1}(l-1,.).
\end{array}\right.
\end{equation} 
With a slight abuse of our notation so far the functional increments $\delta v^{\rho,k+1,l}_i=v^{\rho,k+1,l}_i-v^{\rho,k,l}_i,~1\leq i\leq n$ then solve the same equation as in (\ref{deltaurhok0})
\begin{equation}\label{deltaurhok023}
\left\lbrace \begin{array}{ll}
\frac{\partial \delta v^{\rho,k+1,l}_i}{\partial \tau}-\rho_l \sum_{j=1}^n \frac{\partial^2 \delta v^{\rho,k+1,l}_i}{\partial x_j^2}\\
\\
=-\rho_l\sum_{j=1}^n v^{\rho,k-1,l}_j\frac{\partial \delta v^{\rho,k,l}_i}{\partial x_j}
-\rho_l\sum_j\delta v^{\rho,k,l}_j\frac{\partial v^{\rho,k,l}}{\partial x_j}+\\ 
\\
\rho_l\int_{{\mathbb R}^n}K_{n,i}(x-y){\Big (} \left( \sum_{j,m=1}^n\left( v^{\rho,k,l}_{m,j}+v^{\rho,k-1,l}_{m,j}\right)(\tau,y) \right)  \delta v^{\rho,k,l}_{j,m}(\tau,y) {\Big)}dy,\\
\\
\mathbf{\delta v}^{\rho,k+1,l}(l-1,.)= 0.
\end{array}\right.
\end{equation}
Then we may choose the increment of the control function $\delta r^l_i=r^l_i-r^l_i(l-1,.)$
and 
\begin{equation}\label{funciv3simpl}
\begin{array}{ll}
v^{r,\rho ,l }_i
=v^{r,\rho,l-1}_i+\delta v^{r,\rho,l,1}_i+\delta r^l_i+\sum_{k=2}^{\infty} \delta v^{\rho, l,k}_i
,~1\leq i\leq n.
\end{array}
\end{equation}
We have to note that the increments $\delta v^{\rho, l,k}_i, k\geq 2$ are not the same as before, although they seem to be determined by identical equations (\ref{deltaurhok023}) and (\ref{deltaurhok0}). However, these equations are not identical since for $k+1=2$ the initial data $v^{r,l,0}_i=v^{r,\rho,l-1}_i(l-1,.)$ enter into the equation, and this has certainly an effect for the higher order terms as well. However these considerations simplifies the analysis. We do not have to establish local contraction results for the whole controlled system (\ref{Navleraycontrolledlint}) but only for the original type of Navier Stokes equations.
Indeed, this way of construction makes it possible to do the local analysis of the higher order terms analogously as for the local scheme - only the initial data are different at each time step.

\begin{rem}
We have to mention another ambiguity in notation here. At time step $l\geq 1$ we understand
\begin{equation}\label{vrl-1}
v^{r,\rho ,l-1 }_i=v^{\rho,l-1}_i+r^{l}_i=v^{\rho,l-1}_i+r^{l-1}_i+\delta r^l_i
\end{equation}
where the left side of the equation (\ref{vrl-1}) at time step $l-1$ is $v^{r,\rho,l-1}_i(l-1,.)=v^{\rho,l-1}_i+r^{l-1}_i$. Disambiguation is clear if a certain time step $l$ is fixed. 
\end{rem}

Now from (\ref{funciv3}) we have
\begin{equation}\label{funciv3}
\begin{array}{ll}
v^{r,\rho ,l }_i
=v^{r,\rho,l-1}_i+\sum_{k=2}^{\infty} \delta v^{r,\rho, l,k}_i\\
\\
\sim \sqrt{l-1}+\frac{1}{\sqrt{l}}
\end{array}
\end{equation}
for all $1\leq i\leq n$. This implies that
\begin{equation}
v^{r,\rho ,l }_i\sim \sqrt{l}~~
\mbox{${\Bigg (}$ or}~
\left( v^{r,\rho ,l }_i\right) ^2\sim l{\Bigg )},
\end{equation}
and heritage of this property renders the scheme global. We think in terms of algorithms if we define $\delta r^l_i$ as in (\ref{deltarl0}). Whatever choice is made for $\delta r^l_i$ it is an important property of the choice just made that on the original time scale $t=\rho_l\tau$ we have in original time 
\begin{equation}
 r^l_i\left(\sum_{m=1}^l \rho_l,. \right) \sim l,
\end{equation}
where the property $\rho_l\sim \frac{1}{l}$ ensures that we have a linear bound on a transformed time scale which is still global. This reasoning implies that there is a global linear bound of the Leray projection term on this transformed time scale.

For models with constant viscosity or Navier-Stokes equation models on manifolds the control functions defined in \cite{K3,KNS} are an alternative choice. Let us remark why the choice made there is not suitable for highly degenerate Navier Stokes equation systems. Essentially the choice in \cite{K3,KNS} is of the form
\begin{equation}\label{choicek3}
 \delta r^l_i(\tau,x)=\int_{l-1}^l\int_{{\mathbb R}^n}\phi^l_i(s,y)G_l(\tau-s,x-y)dyds,
 \end{equation}
where $G_l$ is the fundamental solution of
\begin{equation}
\frac{\partial p}{\partial \tau}-\rho_l\Delta p=0
\end{equation}
on $[l-1,l]\times {\mathbb R}^n$, and 
\begin{equation}
\phi^l_i=\phi^{l,v}_i,
\end{equation}
and
\begin{equation}\label{sourcek3}
\phi^{l,v}_i(\tau,.)=-\frac{v^{r,\rho,l-1}_i(l-1,.)}{C} \mbox{ for }\tau\in \left(l-1,l\right].
\end{equation}
The idea then is that for small time step size $\rho_l$ the value of the convoluted source term in (\ref{choicek3}) is close to the source function in (\ref{sourcek3}) and the value of the later source function has no time step size factor $\rho_l>0$. Since all other terms in the equation for $v^{r,\rho,l}_i$ have the small time step size $\rho_l$ as a coefficient, the convoluted source term value dominates all the other value which determine the growth of the increment $\delta v^{r,\rho,l}_i$ at time step $l\geq 1$. The definition in (\ref{sourcek3}) with its minus sign 'stabilizes' the dynamics of the controlled scheme in the sense that for all time step numbers $l\geq 1$ we get
\begin{equation}
\sup_{x\in {\mathbb R}^n}{\big |}v^{r,\rho,l-1}_i(l-1,.){\big |}\leq C\Rightarrow~\sup_{x\in {\mathbb R}^n}{\big |}v^{r,\rho,l}_i(l,.){\big |}\leq C.
\end{equation}
Depending on the time step size we get a similar growth control for all $m\geq 2$ and all multiindices $\alpha$ with $|\alpha|\leq m$
\begin{equation}
\sup_{x\in {\mathbb R}^n}{\big |}D^{\alpha}_xv^{r,\rho,l-1}_i(l-1,.){\big |}\leq C~\Rightarrow~\sup_{x\in {\mathbb R}^n}{\big |}D^{\alpha}_xv^{r,\rho,l}_i(l,.){\big |}\leq C.
\end{equation}
Well with a initial choice $r^0_i$ which may be chosen to be $r^0_i=0$, and the choice of the increment of the control function in (\ref{choicek3}) this implies immediately
\begin{equation}
r^l_i=r^0_i+\sum_{m=1}^l\delta r^l_i\leq \sum_{m=1}^l\frac{C}{C}=l,
\end{equation}
such that we have a linear global bound of the control function (at least). This implies the existence of a global linear bound of the value function
\begin{equation}
v^{\rho,l}_i=v^{r,\rho,l}_i-r _i,
\end{equation}
and this implies the existence of a global regular solution. This reasoning cannot be applied in this simple form to the model class of highly degenerate Navier Stokes equation systems considered in this paper because the related H\"{o}rmander type estimates involve a spatial polynomial growth factor with respect to the spatial variables, and we need the inheritance of polynomial decay of the value functions in time in order to ensure the scheme is a global one.

For the highly degenerate Navier Stokes equation models considered in this paper we may consider extended control functions. 
We may define the equation for the control functions $r^l_i$ such that a solution for $r^l_i$ leads to some source terms on the right side of of (\ref{Navleraycontrolledlint}), which are then constructed time-step by time-step such that they control the growth of the controlled velocity function and the control function itself. These source terms serve as 'growth consumption terms' for this controlled velocity function and sometimes for the control function itself, and are denoted by $\phi^{l,v}_i,~1\leq i\leq n$ and $\phi^{l,r}_i,~1\leq i\leq n$. For $1\leq i\leq n$ we may define
\begin{equation}\label{phiv}
\phi^{l,v}_i:\left[ l-1,l\right] \times {\mathbb R}^n\rightarrow {\mathbb R}
\end{equation}
as a consumption term for the growth of the local controlled velocity function $v^{r,\rho,l}_i$, and sometimes we may define for each $1\leq i\leq n$ a continuous extension of the function in (\ref{phiv}) as a growth consumption function for the control function $r^l_i$, i.e. a function
\begin{equation}\label{phir}
\phi^{l,r}_i:\left[ l-1,l\right] \times {\mathbb R}^n\rightarrow {\mathbb R}
\end{equation}
which will control the growth of the local control function $r^l_i$ it self. Next on the right side of (\ref{Navleraycontrolledlint}) we have functions $v^{r,\rho,l}_i$ which are not known at the beginning of the construction time step $l\geq 1$. However, we have the data $v^{r,\rho,l-1}_i(l-1,.)$ which will turn out to be close enough to the functions $v^{r,\rho,l}_i$ on the domain $[l-1,l]\times {\mathbb R}^n$ in order to do some relevant  growth estimates. Accordingly, the consumption functions $\phi^{l,v}_i$ and $\phi^{l,r}_i$ may be defined in terms of this information which we know at the beginning of time step $l\geq 1$, and which may determine the local growth consumption  function $\phi^l_i$ in the  equation (\ref{controllint}) below which is derived from the right side of (\ref{Navleraycontrolledlint}) - if we take the direct approach.
We emphasize that in the following at time step $l\geq 1$ function names with superscript $l-1$ are to be understood as functions evaluated at time $\tau=l-1$, i.e., for problems at time step $l\geq 1$ on the domain $[l-1,l]\times {\mathbb R}^n$ we use
\begin{equation}
v^{r,\rho,l-1}_i\equiv v^{r,\rho,l-1}_i(l-1,.)~\mbox{ and }r^{l-1}_i=r^{l-1}_i(l-1,.)
\end{equation}
as synonyms.
A direct approach would lead to a nonlinear equation of the form
\begin{equation}\label{controllint}
\left\lbrace \begin{array}{ll}
\frac{\partial r^l_i}{\partial \tau}-\rho_l\nu\sum_{j=1}^n \frac{\partial^2 r^l_i}{\partial x_j^2} 
+\rho_l\sum_{j=1}^n r^l_j\frac{\partial v^{r,\rho,l-1}_i}{\partial x_j}\\
\\
+\rho_l\sum_{j=1}^n v^{r,\rho,l-1}_j\frac{\partial r^l_i}{\partial x_j}+\rho_l\sum_{j=1}^n r^l_j\frac{\partial r^l_i}{\partial x_j}\\
\\+\rho_l\int_{{\mathbb R}^n}\left( \frac{\partial}{\partial x_i}K_n(x-y)\right) \sum_{j,k=1}^n\left( \frac{\partial v^{r,\rho,l-1}_k}{\partial x_j}\frac{\partial v^{r,\rho,l-1}_j}{\partial x_k}\right) (l-1,y)dy\\
\\
-2\rho_l\int_{{\mathbb R}^n}\left( \frac{\partial}{\partial x_i}K_n(x-y)\right) \sum_{j,k=1}^n\left( \frac{\partial v^{r,\rho,l-1}_k}{\partial x_j}(l-1,y)\frac{\partial r^l_j}{\partial x_k}(\tau,y)\right)dy\\
\\
-\rho_l\int_{{\mathbb R}^n}\left( \frac{\partial}{\partial x_i}K_n(x-y)\right) \sum_{j,k=1}^n\left( \frac{\partial r^l_k}{\partial x_j}\frac{\partial r^l_j}{\partial x_k}\right) (\tau,y)dy=\phi^l_i,\\
\\
\mathbf{r}^l(l-1,.)=\mathbf{r}^{l-1}(l-1,.).
\end{array}\right.
\end{equation}
If the source term $\phi^l_i$ is chosen appropriately, then the control system in (\ref{controllint}) is a possible construction, since we can solve such equations which are similar to the original Navier Stokes equation locally. Another direct possibility, preferable from an algorithmic point of view, is a linearisation. This turns out to be sufficient and is certainly preferable from a constructive and from an algorithmic point of view. A simple choice may be the equation
\begin{equation}\label{controllint2}
\left\lbrace \begin{array}{ll}
\frac{\partial r^l_i}{\partial \tau}-\rho_l\nu\sum_{j=1}^n \frac{\partial^2 r^l_i}{\partial x_j^2} 
+\rho_l\sum_{j=1}^n r^{l-1}_j\frac{\partial v^{r,\rho,l-1}_i}{\partial x_j}\\
\\
+\rho_l\sum_{j=1}^n v^{r,\rho,l-1}_j\frac{\partial r^{l-1}_i}{\partial x_j}+\rho_l\sum_{j=1}^n r^{l-1}_j\frac{\partial r^{l-1}_i}{\partial x_j}
\\
\\+\rho_l\int_{{\mathbb R}^n}\left( \frac{\partial}{\partial x_i}K_n(x-y)\right) \sum_{j,k=1}^n\left( \frac{\partial v^{r,\rho,l-1}_k}{\partial x_j}\frac{\partial v^{r,\rho,l-1}_j}{\partial x_k}\right) (l-1,y)dy\\
\\
-2\rho_l\int_{{\mathbb R}^n}\left( \frac{\partial}{\partial x_i}K_n(x-y)\right) \sum_{j,k=1}^n\left( \frac{\partial v^{r,\rho,l-1}_k}{\partial x_j}(l-1,y)\frac{\partial r^{l-1}_j}{\partial x_k}(l-1,y)\right)dy\\
\\
-\rho_l\int_{{\mathbb R}^n}\left( \frac{\partial}{\partial x_i}K_n(x-y)\right) \sum_{j,k=1}^n\left( \frac{\partial r^{l-1}_k}{\partial x_j}\frac{\partial r^{l-1}_j}{\partial x_k}\right) (l-1,y)dy=\phi^l_i,\\
\\
\mathbf{r}^l(l-1,.)=\mathbf{r}^{l-1}(l-1,.).
\end{array}\right.
\end{equation}  
As we get local contraction results for the local higher order correction terms $\delta v^{r,\rho,l,k}_i,~k\geq 2$ it makes sense to define the control function such that it compenates the first increments $\delta v^{r,\rho,l,1}_i=v^{r,\rho,l,1}_i-v^{r,\rho,l-1}_i(l-1,.)$, as we discussed above. Since the control function $r^l_i$ is chosen once at each time step and with the notation as in (\ref{scalparasystlin10v23}) and (\ref{deltaurhok023}) we have
\begin{equation}
\delta v^{r,\rho,l,k}_i=\delta v^{\rho,l,k}_i \mbox{ for }k\geq 2,
\end{equation}
if we understand that in a slight abuse of notation- as we discussed above-
\begin{equation}
\delta v^{\rho,l,k}_i=\delta v^{r^{l-1},\rho,l,k}_i
\end{equation}
This has the advantage that we can do the local analysis essentially without the control function, because at time step $l$ it appears only in the data $v^{r,\rho,l-1}_i(l-1,)$ and $r^{l-1}_i(l-1,.)$- this will change the higher correction terms $\delta v^{r^{l-1},\rho,l,k}_i$ in general but the form of the equation which determine them is the same as in the uncontrolled case. We then choose the increment $\delta r^l_i$ such that the increment of the controlled velocity function is controlled.
 We did that in \cite{KB3} and consider which kind of control functions may be chosen. Since we are interested in growth control with respect to time it is natural to consider the equation for the incremental functions 
\begin{equation}
\delta r^l_i:=r^l_i-r^{l-1}_i(l-1,.).
\end{equation}
If we look at the direct approach, then equation (\ref{controllint2}) becomes
\begin{equation}\label{controllint3}
\left\lbrace \begin{array}{ll}
\frac{\partial \delta r^l_i}{\partial \tau}-\rho_l\nu\sum_{j=1}^n \frac{\partial^2 \delta r^l_i}{\partial x_j^2} 
=-\rho_l\Delta r^{l-1}_i(l-1,.)-\rho_l\sum_{j=1}^n r^{l-1}_j\frac{\partial v^{r,\rho,l-1}_i}{\partial x_j}\\
\\
-\rho_l\sum_{j=1}^n v^{r,\rho,l-1}_j\frac{\partial r^{l-1}_i}{\partial x_j}-\rho_l\sum_{j=1}^n r^{l-1}_j\frac{\partial r^{l-1}_i}{\partial x_j}
\\
\\-\rho_l\int_{{\mathbb R}^n}\left( \frac{\partial}{\partial x_i}K_n(x-y)\right) \sum_{j,k=1}^n\left( \frac{\partial v^{r,\rho,l-1}_k}{\partial x_j}\frac{\partial v^{r,\rho,l-1}_j}{\partial x_k}\right) (l-1,y)dy\\
\\
+2\rho_l\int_{{\mathbb R}^n}\left( \frac{\partial}{\partial x_i}K_n(x-y)\right) \sum_{j,k=1}^n\left( \frac{\partial v^{r,\rho,l-1}_k}{\partial x_j}(l-1,y)\frac{\partial r^{l-1}_j}{\partial x_k}(l-1,y)\right)dy\\
\\
+\rho_l\int_{{\mathbb R}^n}\left( \frac{\partial}{\partial x_i}K_n(x-y)\right) \sum_{j,k=1}^n\left( \frac{\partial r^{l-1}_k}{\partial x_j}\frac{\partial r^{l-1}_j}{\partial x_k}\right) (l-1,y)dy+\phi^l_i,\\
\\
\delta \mathbf{r}^l(l-1,.)=\mathbf{0}.
\end{array}\right.
\end{equation} 
Note that except for $\phi^l_i$ all the terms on the right side of (\ref{controllint3}) have a factor $\rho_l$ which represents a small time step size at time step $l\geq 1$. For the higher order correction terms of the converging functional series contraction results show that the source term $\phi^l_i$ can dominate these corrections. Furthermore, except for first term on the right side of the first equation in (\ref{controllint3}) all terms on the right side of the first equation in (\ref{controllint3}) involve products of controlled velocity functions $v^{r,\rho,l-1}_m$ and control functions $r^l_m$ and/or spatial derivatives of these functions. In order to design a scheme which preserves a certain degree of polynomial decay it is useful to have representations for the approximating increments $\delta v^{r,\rho,l,k}_i$ and $\delta r^{l}_i$  which involve only such products.
Next we discuss a list of control functions. Each of them can be used in order to prove that the corresponding controlled Navier-Stokes equation scheme is global. We do not repeat the control functions of the direct approach above, but let is remark that the list of simple control functions below (simple compared to the direct approach) lead to schemes which are closely linked to the direct approach, i.e., the difference is small at each time step $l\geq 1$ of the scheme due to the small time step size $\rho_l>0$. Especially, if we add a source function $\phi^l_i$ in the following list of control functions for the classical model, then we convolute it with the 'Gaussian' $G_l$ and this means that in the equation for the controlled velocity functions $v^{r,\rho,l}_i$ the terms
\begin{equation}
\frac{\partial}{\partial \tau}r^l_i-\rho_l\Delta r^l_i 
\end{equation}
equals a source term plus additional function terms with coefficient $\rho_l$.
The direct approach is a little cumbersome to write down, and we do not need so much formal complexity. We mention only several possibilities of control functions which can be used in the case of the more general equation system below as well. The control functions are defined with respect to the classical Navier Stokes equation model, but they list for the generalised highly degenerate model can be obtained by replacement of the 'Gaussian' density $G_l$ by the H\"{o}rmander density $G^l_H$ (cf. below).
\begin{itemize}
 \item[i)] We may define for $(\tau,x)\in (l-1,l]\times {\mathbb R}^n$
 \begin{equation}
 \delta r^l_i(\tau,x)=-\delta v^{r,\rho,l,1}_i(\tau,x)+\int_{l-1}^{\tau}\phi^l_i(s,y)G_l(\tau-s,x-y)dyds,
 \end{equation}
where $G_l$ is the fundamental solution of
\begin{equation}
\frac{\partial p}{\partial \tau}-\rho_l\Delta p=0
\end{equation}
on $[l-1,l]\times {\mathbb R}^n$, and 
\begin{equation}
\phi^l_i=\phi^{l,v}_i+\phi^{l,r}_i,
\end{equation}
along with
\begin{equation}
\phi^{l,r}_i(\tau,.)=0 \mbox{ for }\tau\in \left(l-1,l\right],
\end{equation}
and
\begin{equation}
\phi^{l,v}_i(\tau,.)=-\frac{v^{r,\rho,l-1}_i(l-1,.)}{C} \mbox{ for }\tau\in \left(l-1,l\right].
\end{equation}
\item[ia)] A variation of i) is the choice
\begin{equation}
 \delta r^l_i(\tau,x)=\int_{l-1}^{\tau}\phi^l_i(s,y)G_l(\tau-s,x-y)dyds,
 \end{equation} 
 with $\phi^l_i$ as in i). This possibility is denoted as a subitem because it works for the situation of Navier Stokes equation with constant viscosity or the classical Navier Stokes equation on manifolds, but it does not work for highly degenerate Navier Stokes equations considered in this paper in general.
\item[ii)] As we explained above we may also consider the simplified control function
\begin{equation}
 \delta r^l_i=-\delta v^{r,\rho,l,1}_i
 \end{equation}
\item[iii)]In \cite{KB3} we defined for $(\tau,x)\in [l-1,l]\times {\mathbb R}^n$
 \begin{equation}
 \delta r^l_i(\tau,x)=-\delta v^{r,\rho,l,1}_i(\tau,x)+\int_{l-1}^{\tau}\phi^l_i(s,y)G_l(\tau-s,x-y)dyds,
 \end{equation}
where $G_l$ is the fundamental solution of
\begin{equation}
\frac{\partial p}{\partial \tau}-\rho_l\Delta p=0
\end{equation}
on $[l-1,l]\times {\mathbb R}^n$, and 
\begin{equation}
\phi^l_i=\phi^{l,v}_i+\phi^{l,r}_i,
\end{equation}
along with
\begin{equation}
\phi^{l,r}_i(\tau,.)=-\frac{r^{l-1}_i(l-1,.)}{C^2} \mbox{ for }\tau\in \left(l-1,l\right],
\end{equation}
and
\begin{equation}
\phi^{l,v}_i(\tau,.)=-\frac{v^{r,\rho,l-1}_i(l-1,.)}{C} \mbox{ for }\tau\in \left[l-1,l\right].
\end{equation}
In \cite{KB3} we discussed this scheme for $C>1$.
 \item[iv)] We may also a scheme as in iii), but with the weights exchanged, i.e.,
 \begin{equation}
\phi^{l,r}_i(\tau,.)=-\frac{r^{l-1}_i(l-1,.)}{C} \mbox{ for }\tau\in \left(l-1,l\right],
\end{equation}
and
\begin{equation}
\phi^{l,v}_i(\tau,.)=-\frac{v^{r,\rho,l-1}_i(l-1,.)}{C^2} \mbox{ for }\tau\in \left[l-1,l\right].
\end{equation}
or a scheme without the summand  $\phi^{l,r}_i$. The proof that the scheme is global changes accordingly.
\item[v)] We mention also a fifth possibility which illustrates which terms in our representations have to be controlled. It is sufficient to define
\begin{equation}\label{scalparasystlin10vhercrgen2}
\begin{array}{ll}
\delta r^l_i(\tau,x)=-\int_{{\mathbb R}^n}v^{\rho,l-1}_i(l-1,y)G_l(\tau,x;s,y)dy
+v^{\rho,l-1}_i(l-1,x)
\end{array}
\end{equation}
For the general model with H\"{o}rmander diffusion discussed below the related control function is
\begin{equation}\label{scalparasystlin10vhercrgen2}
\begin{array}{ll}
\delta r^l_i(\tau,x)=-\int_{{\mathbb R}^n}v^{\rho,l-1}_i(l-1,y)G^l_H(\tau,x;s,y)dy
+v^{\rho,l-1}_i(l-1,x)
\end{array}
\end{equation}
\end{itemize}

The simple choices ia) and ii) lead to global linear bounds of the Leray projection term. The choice ii) was essentially considered in \cite{KNS,K3} and the choice ia) was considered in \cite{KB2, KB3}. A local contraction result with respect to strong norms ensures that the time-local functional series
\begin{equation}
v^{r,\rho,l}_i=v^{r,\rho,l-1}_i(l-1,.)+\sum_{k=1}^{\infty}\delta v^{r,\rho,l,k}_i
\end{equation}
converges and provides an upper bound for the growth $\delta v^{r,\rho,l}_i(l,.)=v^{r,\rho,l}_i(l,.)-v^{r,\rho,l-1}_i(l-1,.)$. This growth scales with the time step size $\rho_l$ while the source term in ia)
\begin{equation}
\int_{l-1}^{\tau}\left( -\frac{v^{r,\rho,l-1}_i(l-1,.)}{C}\right) G_l(\tau-s,x-y)dyds
\end{equation}
has no factor $\rho_l$ and is close to the integrand $\left( -\frac{v^{r,\rho,l-1}_i(l-1,.)}{C}\right)$ as $\rho_l$ becomes small. If the modulus $|v^{r,\rho,l-1}_i(l-1,.)|$ becomes larger or equal to $C$ then this damping term dominates the growth of $|\delta v^{r,\rho,l}_i(l,.)|$. Similar for strong norms. The effect is that for a certain step size $\rho_l$ an upper bound of the controlled value function can be established of the form
\begin{equation}
{\big |}v^{r,\rho,l}_i{\big |}_{C^0\left([l-1,l]\times H^{m}\right) }\leq C
\end{equation}
for some $m\geq 2$.
As the control function $r^l_i$ have a linear bound then (because $\delta r^l_i\sim 1$) we get a linear global bound for the velocity functions themself. The idea of the control function in ii) is different. It focuses on the idea to get a global linear bound of the Leray projection term. The idea is that the local contraction result may be refined such that
with the special choice of $\delta r^l_i$ as in ii) the growth of
\begin{equation}\label{funciv3}
\begin{array}{ll}
v^{r,\rho ,l }_i=v^{r,\rho,l-1}_i+\sum_{k=2}^{\infty} \delta v^{r,\rho, l,k}_i
,~1\leq i\leq n,
\end{array}
\end{equation}
is bounded by some constant $\sim\sqrt{l}$ while the control function growth again linearly with respect to the time step number. The analysis can be done if we interpret the scheme in (\ref{funciv3}) as a scheme which involves the control function, but we mention that in our notation with (\ref{scalparasystlin10v23}) and (\ref{deltaurhok023}) we may write 
\begin{equation}\label{funciv3simplified}
\begin{array}{ll}
v^{r,\rho ,l }_i=v^{r,\rho,l-1}_i+\sum_{k=2}^{\infty} \delta v^{\rho, l,k}_i
,~1\leq i\leq n,
\end{array}
\end{equation}
which simplifies the analysis a bit (again for the increments on the right side we suppressed the upper inde $r^{l-1}$). Note that in the limit a representation (\ref{funciv3simplified}) leads to the same result as a limit in (\ref{funciv3}) even if we interpret the latter as a representation of the direct approach.

The preceding considerations lead to a linear bound of the Leray projection term and hence to global existence. The other alternatives are refinements. The possibility 
in iii) is a refinement of ia). We shall see that we can still get an upper bound for the controlled value function $v^{r,\rho,l}_i$ while the additional summand can ensure that the control function it self has an upper bound which is independent of the time step number $l\geq 1$. This leads to a global uniform upper bound. We shall have a closer look at this below. 

All these ideas can be applied with some additional modifications to a more general class of models. Next we shall define this more general class of equation systems.
Recall that an operator $L$ with $C^{\infty}$ coefficients, and defined on an open set $\Omega\subseteq {\mathbb R}^n$ is called hypoelliptic if any distribution $u$ on $\Omega$ which solves $Lu=f$ for some $f\in C^{\infty}$ is itself in $C^{\infty}$. Here, the function space $C^{\infty}$ denotes the set of smooth functions as usual. This definition applies to scalar equations, but we may generalize this and define similar concepts for vector-valued equation straightforwardly. However, in this paper we are only interested in a nonlinear coupling of linear second order equations where the linear second order part (including the first order terms) satisfies a hypoellipticity condition. 
For positive  natural numbers $m,n$ consider a matrix-valued function 
\begin{equation}\label{vcoeff}
x\rightarrow (v^q_{ji})^{n,m}(x),~1\leq j\leq n,~0\leq i\leq m
\end{equation}
 on ${\mathbb R}^n$, and $m$ smooth vector fields 
\begin{equation}\label{vvec}
V_{i}=\sum_{j=1}^n v_{ji}(x)\frac{\partial}{\partial x_j},
\end{equation}
where $0\leq i\leq m$. H\"{o}rmander showed in the scalar case that
a density exists if 
the following condition is satisfied:
for all $x\in {\mathbb R}^n$  we have
\begin{equation}\label{Hoer}
H_x={\mathbb R}^n,
\end{equation}
where
\begin{equation}\label{Hoergenx}
\begin{array}{ll}
H_x:=\mbox{span}{\Big\{} V_{i}(x), \left[V_{j},V_{k} \right](x),
\left[ \left[V_{j},V_{k} \right], V_{l}\right](x),\\
\\
\cdots |1\leq i\leq m,~0\leq j,k,l,\cdots \leq m {\Big \}}.
\end{array}
\end{equation}
Here $\left[.,.\right]$ denotes the Lie bracket of vector fields as usual. More precisely, H\"{o}rmander showed that (given $1\leq q\leq n$) the distributional Cauchy problem
\begin{equation}
	\label{hoer1}
	\left\lbrace \begin{array}{ll}
		\frac{\partial u}{\partial t}=\frac{1}{2}\sum_{i=1}^mV_{i}^2u+V_{0}u\\
		\\
		u(0,x;y)=\delta_y(x),
	\end{array}\right.
\end{equation}
has a smooth solution on $(0,\infty)\times {\mathbb R}^n$. Here $\delta_y(x)=\delta(x-y)$ is the Dirac delta distribution shifted by the vector $y\in {\mathbb R}^n$. For coefficient functions $b_i\in C^{\infty}_b$, where the latter function space $C^{\infty}_b$ denotes the function space of smooth functions with bounded derivatives,
consider an additional vector field
\begin{equation}\label{Bvvec}
V_{B}[v]:=\sum_{j=1}^n B_j(x)v_j\frac{\partial}{\partial x_j},
\end{equation}
\begin{defi}\label{defiell}
Let $n \geq 3$
\begin{equation}
D=\left\lbrace (x,y)\in {\mathbb R}^n\times {\mathbb R}^n|x=y\right\rbrace .
\end{equation}
We say that the function
\begin{equation}
\begin{array}{ll}
K^{\mbox{ell}}_n\in C^{\infty}
\left( \left( {\mathbb R}^n\times {\mathbb R}^n\right)\setminus D\right) 
\end{array}
\end{equation}
is an elliptic kernel if
\begin{equation}
K^{\mbox{ell}}_n\in O\left(|x-y|^{2-n}\right),~K^{\mbox{ell}}_{n,i}\in O\left(|x-y|^{1-n}\right).
\end{equation}
\end{defi}
It may be that kernels of linear elliptic equation satisfy stronger assumptions than those of (\ref{defiell}), but these assumptions represent what we need. In this paper we establish global scheme for the Cauchy problem
\begin{equation}\label{nsgenint}
\left\lbrace \begin{array}{ll}
\frac{\partial v_i}{\partial t}-\frac{1}{2}\sum_{j=0}^mV_{j}^2v_i+V_{B}\left[v \right] v_i\\
\\
=\int_{{\mathbb R}^n}\left( \frac{\partial}{\partial x_i}K^{\mbox{ell}}_n(x-y)\right) \sum_{j,k=1}^n\left(c_{jk} \frac{\partial v_k}{\partial x_j}\frac{\partial v_j}{\partial x_k}+\sum_{j,k=1}^n d_j\frac{\partial v_i}{\partial x_k}\right) (t,y)dy,\\
\\
\mathbf{v}(0,.)=\mathbf{h},
\end{array}\right.
\end{equation}
where $K^{\mbox{ell}}$ is an elliptic kernel. The treatment of this class of equations in (\ref{nsgenint}) can be applied for global schemes for incompressible Navier Stokes equations on manifolds. Furthermore the class represented in (\ref{nsgenint}) includes a class of incompressible Navier Stokes equations with variable viscosity. The degree of global regularity which we obtain depends on the degree of polynomial decay of the initial data $h_i$ in relation to certain polynomial growth behavior of a priori estimates of H\"{o}rmander diffusions. It seems that these a priori estimates were obtained in full generality in \cite{KS}. We note that 
the integral term may be extended but the form given in (\ref{nsgenint}) is an essential step, as it is possible to treat Navier Stokes equations on Riemannian manifolds and Navier-Stokes equations with variable viscosity or versions of compressible fluid models based on the global scheme for (\ref{nsgenint}) proposed in this paper. 
Next we recall the estimates in \cite{KS} and describe the global scheme for the system in (\ref{nsgenint}). In \cite{KS} the H\"{o}rmander diffusions are described in probabilistic terms. The result of \cite{KS} can be summarized as follows. 
\begin{thm}
	\label{stroock}
	Consider a $d$-dimensional difffusion process of the form
	\begin{equation}\label{stochm}
		\mathrm{d}X_t \ = \ \sum_{i=1}^d\sigma_{0i}(X_t)\mathrm{d}t + \sum_{j=1}^{d}\sigma_{ij}(X_t)\mathrm{d}W^j_t
	\end{equation}
with $X(0)=x\in {\mathbb R}^d$ with values in ${\mathbb R}^d$ and on a time interval $[0,T]$, and where $W_j,~1\leq j\leq n$ denotes a standard Brownian motion.
Assume that $\sigma_{0i},\sigma_{ij}\in C^{\infty}_{lb}$. Then the law of the process $X$ is absolutely continuous with respect to the Lebesgue measure, and the density $p$ exists and is smooth, i.e. 
\begin{equation}
\begin{array}{ll}
p:(0,T]\times {\mathbb R}^d\times{\mathbb R}^d\rightarrow {\mathbb R}\in C^{\infty}\left( (0,T]\times {\mathbb R}^d\times{\mathbb R}^d\right). 
\end{array}
\end{equation}
Moreover, for each nonnegative natural number $j$, and multiindices $\alpha,\beta$ there are increasing functions of time
\begin{equation}\label{constAB}
A_{j,\alpha,\beta}, B_{j,\alpha,\beta}:[0,T]\rightarrow {\mathbb R},
\end{equation}
and functions
\begin{equation}\label{constmn}
n_{j,\alpha,\beta}, 
m_{j,\alpha,\beta}:
{\mathbb N}\times {\mathbb N}^d\times {\mathbb N}^d\rightarrow {\mathbb N},
\end{equation}
such that 
\begin{equation}\label{pxest}
\begin{array}{ll}
{\Bigg |}\frac{\partial^j}{\partial t^j} \frac{\partial^{|\alpha|}}{\partial x^{\alpha}} \frac{\partial^{|\beta|}}{\partial y^{\beta}}p(t,x,y){\Bigg |}\\
\\
\leq \frac{A_{j,\alpha,\beta}(t)(1+x)^{m_{j,\alpha,\beta}}}{t^{n_{j,\alpha,\beta}}}\exp\left(-B_{j,\alpha,\beta}(t)\frac{(x-y)^2}{t}\right)
\end{array}
\end{equation}
Moreover, all functions (\ref{constAB}) and  (\ref{constmn}) depend on the level of iteration of Lie-bracket iteration at which the H\"{o}rmander condition becomes true.
\end{thm}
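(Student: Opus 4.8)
The plan is to reconstruct Theorem \ref{stroock} by recalling the probabilistic machinery of \cite{KS} — essentially Malliavin calculus applied to hypoelliptic diffusions — and then translating the abstract integration-by-parts bounds into the stated pointwise Gaussian-type estimates with polynomial growth factors. First I would verify the existence and smoothness of the density: under the H\"{o}rmander condition \eqref{Hoer} on the vector fields built from $\sigma_{0i},\sigma_{ij}$, the Malliavin matrix $\gamma_t = \left( \langle DX^p_t, DX^q_t\rangle_{H}\right)_{p,q}$ is a.s. invertible and its inverse has finite moments of all orders on $(0,T]$; combined with the fact that $X_t$ belongs to the Malliavin--Sobolev space $\mathbb{D}^\infty$ (here one uses $\sigma_{0i},\sigma_{ij}\in C^\infty_{lb}$, the class of smooth functions with bounded derivatives of order $\geq 1$, so that the SDE \eqref{stochm} is well-posed and its flow is smooth in the Malliavin sense), the standard criterion gives $p(t,\cdot,\cdot)\in C^\infty$ on $(0,T]\times\mathbb{R}^d\times\mathbb{R}^d$.

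\emph{The core of the proof} is the quantitative estimate \eqref{pxest}. The strategy is the representation $p(t,x,y) = \mathbb{E}\left[\mathbf{1}_{\{X_t \geq y\}} H_{(1,\dots,d)}(X_t,1)\right]$ where the Malliavin weights $H_\alpha$ are obtained by iterated integration by parts on Wiener space, each step introducing a factor involving $\gamma_t^{-1}$, derivatives of $X_t$, and a Skorokhod integral. Differentiating in $x$, $y$, and $t$ produces analogous representations with more weights. One then estimates the $L^p$-norms of these weights: the $\gamma_t^{-1}$ contributions generate the negative powers $t^{-n_{j,\alpha,\beta}}$ (the exponent counting how many times one must invert the Malliavin matrix, which is exactly where the \emph{level of Lie-bracket iteration} enters — a higher step in H\"{o}rmander's condition means $\gamma_t^{-1}$ blows up faster as $t\to 0$), while the Gaussian factor $\exp\left(-B_{j,\alpha,\beta}(t)(x-y)^2/t\right)$ comes from a large-deviation / exponential Chebyshev estimate on $\mathbf{1}_{\{X_t\geq y\}}$ localized near the event $|X_t - x|\gtrsim|x-y|$. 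The polynomial growth factor $(1+x)^{m_{j,\alpha,\beta}}$ is the genuinely non-classical ingredient: because the coefficients are only \emph{locally} bounded with bounded derivatives rather than globally bounded, the moments of $X_t$ and of its Malliavin derivatives inherit a polynomial-in-$x$ dependence through Gr\"{o}nwall-type estimates along the flow, and this propagates linearly through each integration-by-parts step, yielding the exponents $m_{j,\alpha,\beta}$ depending on $j,\alpha,\beta$ and on the H\"{o}rmander level.

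\textbf{The main obstacle} I expect is controlling the negative moments of the Malliavin covariance matrix $\gamma_t$ uniformly in the starting point $x$ and with the sharp power of $t$. Under the uniform H\"{o}rmander condition one has the Norris lemma / Kusuoka--Stroock estimate $\mathbb{P}\left(\det\gamma_t < \varepsilon\right) \leq C_N \varepsilon^N$ for every $N$ and small $t$, but making the constants' dependence on $x$ explicitly polynomial — rather than merely locally uniform — requires a careful tracking of how the spanning property \eqref{Hoergenx} degrades as one moves in space, and this is precisely what forces the growth functions $m_{j,\alpha,\beta}$ and the step-dependence asserted in the theorem. The remaining steps (assembling the IBP formula, taking $L^p$ norms via H\"{o}lder, and collecting the time singularities) are then routine, and the monotonicity in $t$ of $A_{j,\alpha,\beta}$ and $B_{j,\alpha,\beta}$ follows by absorbing all time-dependent multiplicative factors into these functions and observing that the bounds only deteriorate as the time horizon grows.
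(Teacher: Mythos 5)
The paper offers no proof of this theorem: it is stated verbatim as a summary of the Kusuoka--Stroock result and justified only by the citation to \cite{KS}, so there is no internal argument to compare yours against. Your sketch — smoothness via invertibility of the Malliavin matrix under the H\"{o}rmander condition, the pointwise bound via iterated integration by parts on Wiener space with the negative powers of $t$ coming from the inverse Malliavin covariance (hence the dependence on the Lie-bracket level), the Gaussian factor from exponential tail estimates, and the polynomial factor $(1+x)^{m_{j,\alpha,\beta}}$ from the merely locally bounded coefficient class $C^{\infty}_{lb}$ — is exactly the route taken in the cited reference, and is correct as an outline of that proof.
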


The theorem in (\ref{stroock})  is also sometimes formulated in a probabilistic manner. We note
\begin{cor}
	In the situation of (\ref{stroock}) above, solution $X_t^x$ starting at $x$ is in the standard Malliavin space $D^{\infty}$, and there are constants $C_{l,q}$ depending on the derivatives of the drift and dispersion coefficients such that for some constant $\gamma_{l,q}$
\begin{equation}\label{xprocessest}
|X_t^x|_{l,q}\leq C_{l,q}(1+|x|)^{\gamma_{l,q}}.
\end{equation}
Here $|.|_{l,q}$ denotes the norm where derivatives up to order $l$ are in $L^q$ (in the Malliavin sense).
\end{cor}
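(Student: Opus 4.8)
The plan is to deduce both assertions from the by now standard Malliavin calculus of stochastic differential equations with smooth coefficients whose derivatives of every order are bounded and whose values grow at most linearly, the only extra point being to keep explicit track of how all constants depend on the starting point $x$. Note first that this corollary does not use the H\"{o}rmander condition at all: membership of $X^x_t$ in $D^{\infty}$ and the polynomial-in-$|x|$ bounds on its Malliavin norms hold for \emph{any} such coefficient matrix, and it is only the existence and smoothness of the density in Theorem \ref{stroock} that uses hypoellipticity. That $X^x_t\in D^{\infty}$, i.e.\ that the Wiener functional $\omega\mapsto X^x_t(\omega)$ is (for each fixed $x$) infinitely Malliavin differentiable with Malliavin derivatives of all orders having finite moments of every order, is classical: one differentiates the defining SDE formally, obtaining for each order of differentiation a linear SDE driven by the same Brownian motion, and checks inductively that these equations are well posed in all $L^q$ (cf.\ \cite{KS} and the references therein).

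For the quantitative estimate I would argue by induction on the Malliavin order $l$. The case $l=0$ is the classical moment bound: since the coefficients have at most linear growth, the Burkholder--Davis--Gundy inequality followed by Gronwall's lemma gives, for every $q\geq 1$,
\[
\E\Big[\sup_{0\leq u\leq t}|X^x_u|^q\Big]\leq C_{q}(1+|x|)^{q},
\]
with $C_q$ depending only on $T$ and on the linear-growth constants of $\sigma_0,\sigma$. For the first Malliavin derivative, $D^j_sX^x_t$ (for $s\leq t$) solves the linear equation
\[
D^j_sX^x_t=\sigma_{\cdot j}(X^x_s)+\int_s^t\nabla\sigma_0(X^x_u)\,D^j_sX^x_u\,du+\sum_k\int_s^t\nabla\sigma_{\cdot k}(X^x_u)\,D^j_sX^x_u\,dW^k_u,
\]
and since the first order derivatives of $\sigma_0,\sigma$ are bounded, BDG and Gronwall bound $\E[\,|D^j_sX^x_t|^q\,]$ by a constant multiple of $\E[\,|\sigma_{\cdot j}(X^x_s)|^q\,]\leq C(1+|x|)^q$, uniformly in $s\leq t$. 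The iterated Malliavin derivatives $D^{(k)}X^x_t$ satisfy linear SDEs of the same homogeneous structure but with an inhomogeneous forcing term which is a finite Leibniz-type sum of products of bounded higher derivatives of the coefficients with Malliavin derivatives of $X^x$ of orders strictly smaller than $k$; inserting the inductive hypothesis $\E[\,\|D^{(j)}X^x_t\|^q\,]\leq C_{j,q}(1+|x|)^{\gamma_{j,q}}$ for $j<k$, estimating the products by H\"{o}lder's inequality, and applying BDG and Gronwall once more yields $\E[\,\|D^{(k)}X^x_t\|^q\,]\leq C_{k,q}(1+|x|)^{\gamma_{k,q}}$, where $\gamma_{k,q}$ is fixed by the recursion coming from the combinatorics of the forcing term. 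Summing these bounds over $k\leq l$ produces $|X^x_t|_{l,q}\leq C_{l,q}(1+|x|)^{\gamma_{l,q}}$, which is the claim.

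The part I expect to cost the most is not conceptual but organisational: writing down precisely the linear SDEs obeyed by the iterated Malliavin derivatives $D^{(k)}X^x_t$, arranging the forcing terms so that the H\"{o}lder/BDG/Gronwall estimates close the induction, and extracting from the combinatorics an \emph{explicit} exponent $\gamma_{l,q}$ (also verifying that $\|D^{(k)}X^x_t\|_{H^{\otimes k}}$ is controlled by the pointwise process norms after integrating the time arguments $s_1,\dots,s_k$ over $[0,t]$, which only costs constants depending on $T$). A secondary point is to pin down the coefficient class $C^{\infty}_{lb}$ (smooth, at most linear growth of the values, all derivatives of order $\geq 1$ bounded) and to check that this is exactly the class used in \cite{KS}; if one restricts to globally bounded coefficients the argument simplifies, since then already the zeroth order moments are $O(1+|x|^q)$ and the exponents $\gamma_{l,q}$ can be kept small. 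Everything else is a routine iteration of Gronwall's inequality.
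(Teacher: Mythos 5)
Your proposal is essentially correct, but it should be said up front that the paper does not prove this corollary at all: it is stated as a probabilistic reformulation of Theorem \ref{stroock} and is simply imported from Kusuoka--Stroock \cite{KS}, so there is no in-paper argument to compare against step by step. What you supply is the standard self-contained proof behind that citation: iterate the Malliavin derivative through the SDE, observe that each iterated derivative solves a linear SDE whose forcing is a Leibniz sum of bounded higher derivatives of the coefficients times lower-order Malliavin derivatives, and close the induction with Burkholder--Davis--Gundy, H\"older and Gronwall, tracking the $(1+|x|)$-dependence through the initial term $\sigma_{\cdot j}(X^x_s)$ and the zeroth-order moment bound. This is exactly the argument in the literature the paper leans on, and your remark that the H\"ormander condition plays no role here --- it is only needed for the nondegeneracy of the Malliavin covariance matrix and hence for the existence and smoothness of the density in Theorem \ref{stroock}, not for membership in $D^{\infty}$ or for the polynomial bounds (\ref{xprocessest}) --- is a correct and useful clarification that the paper leaves implicit. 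The only caveats are the organisational ones you already flag yourself: writing the iterated-derivative equations precisely, integrating out the time arguments in the $H^{\otimes k}$-norms (which costs only $T$-dependent constants), and confirming that $C^{\infty}_{lb}$ in \cite{KS} means smooth coefficients with all derivatives of order at least one bounded, so that the values grow at most linearly; none of these affects the validity of the outline.
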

Note the polynomial dependence on $x$ of the factor
\begin{equation}
\frac{A_{j,\alpha,\beta}(t)(1+x)^{m_{j,\alpha,\beta}}}{t^{n_{j,\alpha,\beta}}}
\end{equation} compared to the case of constant viscosity, and it is a motivation for our definition of the control function above where we have the Laplacian of the controlled velocity function $v^{r,\rho,l}_i$ on the right side of the equation for the increment of the control at time step $l\geq 1$. For our purposes we need an additional observation which follows from the considerations of in \cite{H} and in \cite{KS}. We shall also consider this local behavior of spatial derivatives of H\"{o}rmander type densities in \cite{KH}. We remark that adjoint densities $p^*$ of densities $p$ satisfying a linear parabolic equation 
Local  adjoints of densities, and which satisfy
\begin{equation}
p(t,x;s,y)=p^*(s,y;t,x)
\end{equation}
can be constructed locally for H\"{o}rmander type densities as well. This follows from our construction in \cite{KH} as a Corollary to the theorem above. In our scheme we can make a similar use of this adjoints as we did in the case of strictly parabolic equations in \cite{KB2}, as there are similar weakly singular upper bounds for the density and its first spatial derivatives (also as a consequence of \cite{KH}).

Next we describe a global controlled scheme for the 
equation system in (\ref{nsgenint}). We describe the scheme incorporating the control function from the beginning.
We start with the description of the local scheme. We assume that $v^{r,\rho,l-1}_i(l-1,.)$. Locally on the domain $[l-1,l]\times {\mathbb R}^n$ and knowing $\mathbf{v}^{r,\rho,l-1}(l-1,.)$ we have to solve for $\mathbf{v}^{\rho,l}$ the equation
\begin{equation}\label{qparasystnav2hoerl}
\left\lbrace \begin{array}{ll}
\frac{\partial v^{\rho,l}_i}{\partial \tau}-\rho_l\frac{1}{2}\sum_{j=0}^mV_{j}^2v^{\rho,l}_i+\rho_lV_{B}\left[v^{\rho,l} \right] v^{\rho,l}_i\\
\\
=\rho_l\int_{{\mathbb R}^n}\sum_{j,m=1}^n \left(c_{jm} \frac{\partial v^{\rho,l}_m}{\partial x_j}\frac{\partial v^{\rho,l}_j}{\partial x_m}\right) (\tau,y)\frac{\partial}{\partial x_i}K^{\mbox{ell}}_n(x-y)dy,\\
\\
\mathbf{v}^{\rho,l}(l-1,.)=\mathbf{v}^{r,\rho,l-1}(l-1,.).
\end{array}\right.
\end{equation}  
We then denote $v^{r,\rho,l}_i=v^{\rho,l}_i+\delta r^l_i$ for $1\leq i\leq n$,
where the increment $\delta r^l_i$ is chosen once at the start of time step $l\geq 1$.
At the beginning of time step $l\geq 1$ we compute the series
\begin{equation}\label{funciv}
v^{\rho ,l }_i=v^{\rho,l,1}_i+\sum_{k=1}^{\infty} \delta v^{\rho,l, k+1}_i, 1\leq i\leq n,
\end{equation} 
where $v^{\rho,l,1}_i$ solves 
 \begin{equation}\label{scalparasystlin10v}
\left\lbrace \begin{array}{ll}
\frac{\partial v^{\rho,l,1}_i}{\partial \tau}-\rho_l\frac{1}{2}\sum_{j=0}^mV_{j}^2v^{\rho,l,1}_i=\\
\\ 
-\rho_lV_{B}\left[v^{\rho,l-1}(l-1,.)\right] v^{\rho,l-1}_i(l-1,.)+\\
\\
\rho_l\int_{{\mathbb R}^n}\sum_{j,m=1}^n \left(c_{jm} \frac{\partial v^{\rho,l-1}_m}{\partial x_j}(l-1,.)\frac{\partial v^{\rho,l-1}_j}{\partial x_m}(l-1,.)\right) (\tau,y)\frac{\partial}{\partial x_i}K^{\mbox{ell}}_n(x-y)dy,\\
\\
{\bf v}^{\rho,1,l}(l-1,.)={\bf v}^{r,\rho, l-1}(l-1,.).
\end{array}\right.
\end{equation} 
Furthermore, for $k\geq 1$ the functional increments $\delta v^{\rho,l,k+1}_i=v^{\rho,l,k+1}_i-v^{\rho,l,k}_i,~1\leq i\leq n$ solve
\begin{equation}\label{deltaurhok0*******}
\left\lbrace \begin{array}{ll}
\frac{\partial \delta v^{\rho,l,k+1}_i}{\partial \tau}-\rho_l\frac{1}{2}\sum_{j=0}^mV_{j}^2\delta v^{\rho,l,k+1}_i=\\
\\ 
-\rho_lV_{B}\left[v^{\rho,l,k}\right] \delta v^{\rho,l,k}_i-\rho_lV_{B}\left[\delta v^{\rho,l,k}\right] v^{\rho,l,k}_i\\ 
\\
\rho_l\int_{{\mathbb R}^n}K^{\mbox{ell}}_{n,i}(x-y){\Big (} \left( \sum_{j,m=1}^nc_{jm}\left( v^{\rho,l,k}_{m,j}+v^{\rho,l,k-1}_{m,j}\right)(\tau,y) \right)  \delta v^{\rho,l,k}_{j,m}(\tau,y) {\Big)}dy\\
\\
\mathbf{\delta v}^{\rho,l,k+1}(l-1,.)= 0,
\end{array}\right.
\end{equation}
and where $\delta v^{\rho,l,1}_j= v^{\rho,l,1}_j-v^{\rho,l,0}:=v^{\rho,l,1}_j-v^{r,\rho,l-1}_i(l-1,.)$. Note that $\delta v^{\rho,l,1}_j= v^{\rho,l,1}_j-h_j$ at the first time-step (if we choose $r^{0}_i\equiv 0$.
We shall prove a local contraction result for the increments of this scheme, where we generalise considerations in \cite{KB2} and \cite{KB3}. This leads to a local existence result of regular solutions. Note the at time step $l$ approximating solution can be represented in terms of the fundamental solution (or density) of
\begin{equation}
\frac{\partial v^{\rho,l}_i}{\partial \tau}-\rho_l\frac{1}{2}\sum_{j=0}^mV_{j}^2v^{\rho,l}_i=0.
\end{equation}
on $[l-1,l]\times {\mathbb R}^n$, which we denote by $G^l_H$.
The global scheme solves a system for the regular control function $\mathbf{r}=\left(r_1,\cdots ,r_n \right)^T:[0,\infty)\times {\mathbb R}^n\rightarrow {\mathbb R}^n$, and an equation for the controlled velocity function 
\begin{equation}
\mathbf{v}^{r}:=\mathbf{v}+\mathbf{r}.
\end{equation}
We need not solve the equations for this controlled velocity function directly, but it can be done. In any case the construction is done time-step by time step on domains $\left[l-1,l\right]\times {\mathbb R}^n,~l\geq 1$, where for $1\leq i\leq n$ the restriction of the control function component $r_i$ to $\left[l-1,l\right]\times {\mathbb R}^n$ is denoted by 
$r^l_i$. The local functions $v^{r,\rho,l}_i$ with $v^{r,\rho,l}_i(\tau,x)=v^{r,l}_i(t,x)$ are defined inductively on $\left[l-1,l\right]\times {\mathbb R}^n$ along with the control function $r^l$ via the Cauchy problem for
\begin{equation}
\mathbf{v}^{r,\rho,l}=\left(v^{\rho,l}_1+r^l_1,\cdots v^{\rho,l}_n+r^l_n\right)^T,
\end{equation}
which satisfies the equation   
\begin{equation}\label{Navleraycontrolledlintgen}
\left\lbrace \begin{array}{ll}
\frac{\partial v^{r,\rho,l}_i}{\partial \tau}-\rho_l\frac{1}{2}\sum_{j=0}^mV_{j}^2v^{r,\rho,l}_i
-\rho_lV_{B}\left[v^{r,\rho,l}\right] \ v^{r,\rho,l}_i=\\
\\
\frac{\partial r^l_i}{\partial \tau}-\rho_l\frac{1}{2}\sum_{j=0}^mV_{j}^2r^{l}_i
-\rho_lV_{B}\left[v^{r,\rho,l}\right] r^{l}_i-\rho_lV_{B}\left[r^{l}\right] v^{r,\rho,l}_i+\rho_lV_{B}\left[r^{l}\right] r^{l}_i\\
\\+\rho_l\int_{{\mathbb R}^n}\left( \frac{\partial}{\partial x_i}K^{\mbox{ell}}_n(x-y)\right) \sum_{j,k=1}^n\left( c_{jk}\frac{\partial v^{r,\rho,l}_k}{\partial x_j}\frac{\partial v^{r,\rho,l}_j}{\partial x_k}\right) (\tau,y)dy\\
\\
-2\rho_l\int_{{\mathbb R}^n}\left( \frac{\partial}{\partial x_i}K^{\mbox{ell}}_n(x-y)\right) \sum_{j,k=1}^n\left(c_{jk} \frac{\partial v^{r,\rho,l}_k}{\partial x_j}\frac{\partial r^l_j}{\partial x_k}\right) (\tau,y)dy\\
\\
-\rho_l\int_{{\mathbb R}^n}\left( \frac{\partial}{\partial x_i}K^{\mbox{ell}}_n(x-y)\right) \sum_{j,k=1}^n\left( c_{jk}\frac{\partial r^l_k}{\partial x_j}\frac{\partial r^l_j}{\partial x_k}\right) (\tau,y)dy,\\
\\
\mathbf{v}^{r,\rho,l}(l-1,.)=\mathbf{v}^{r,\rho,l-1}(l-1,.).
\end{array}\right.
\end{equation}
For $1\leq i\leq n$ the choice of the control function $r^l_i$ is mainly determined by the choice of two source functions
\begin{equation}\label{phivphir}
\begin{array}{ll}
\phi^{l,v}_i:\left[ l-1,l\right) \times {\mathbb R}^n\rightarrow {\mathbb R},\\
\\
\phi^{l,v}_i:\left[l-1,l\right) \times {\mathbb R}^n\rightarrow {\mathbb R}.
\end{array}
\end{equation}
\begin{rem}
It is a matter of taste whether we define the source function on the closed intervals $\left[ l-1,l\right]$ or on the half open intervals $\left[ l-1,l\right)$. The latter definition may be chosen in order to avoid 'overlaps'. However, since the functions involved are regularly bounded the time integral over the the closed interval and the half open interval lead to the same result.
\end{rem}

Similar as described in the case of the classical Navier Stokes equation described above these source functions are related to the source functions $\phi^l_i$, and there are different possibilities to introduce this relation. A direct approach is via
the equation
\begin{equation}\label{controllint2gen}
\left\lbrace \begin{array}{ll}
\frac{\partial r^l_i}{\partial \tau}-\rho_l\frac{1}{2}\sum_{j=0}^mV_{j}^2r^{l}_i
-\rho_lV_{B}\left[v^{r,\rho,l-1}\right] r^{l-1}_i\\
\\
-\rho_lV_{B}\left[r^{l-1}\right] v^{r,\rho,l-1}_i
+\rho_lV_{B}\left[r^{l-1}\right] r^{l-1}_i\\
\\+\rho_l\int_{{\mathbb R}^n}\left( \frac{\partial}{\partial x_i}K^{\mbox{ell}}_n(x-y)\right) \sum_{j,k=1}^n\left(c_{jk} \frac{\partial v^{r,\rho,l-1}_k}{\partial x_j}\frac{\partial v^{r,\rho,l-1}_j}{\partial x_k}\right) (l-1,y)dy\\
\\
-2\rho_l\int_{{\mathbb R}^n}\left( \frac{\partial}{\partial x_i}K^{\mbox{ell}}_n(x-y)\right) \sum_{j,k=1}^n\left( c_{jk}\frac{\partial v^{r,\rho,l-1}_k}{\partial x_j}(l-1,y)\frac{\partial r^{l-1}_j}{\partial x_k}(l-1,y)\right)dy\\
\\
-\rho_l\int_{{\mathbb R}^n}\left( \frac{\partial}{\partial x_i}K^{\mbox{ell}}_n(x-y)\right) \sum_{j,k=1}^n\left(c_{jk} \frac{\partial r^{l-1}_k}{\partial x_j}\frac{\partial r^{l-1}_j}{\partial x_k}\right) (l-1,y)dy=\phi^l_i,\\
\\
\mathbf{r}^l(l-1,.)=\mathbf{r}^{l-1}(l-1,.).
\end{array}\right.
\end{equation}
At this point where we have to determine or choose the source functions $\phi^l_i$ it is important to note that H\"{o}rmander type diffusions do not preserve a polynomial decay of a certain order in general. Note again the polynomial growth factor in (\ref{xprocessest}).

Again we emphasize that the local contraction results for the local higher order correction terms $\delta v^{r,\rho,l,k}_i$ lead us to define the control function such that it compensates the first increments $\delta v^{r,\rho,l,1}_i=v^{r,\rho,l,1}_i-v^{r,\rho,l-1}_i(l-1,.)$. This is indeed sufficient in order to define a global scheme, i.e., to get a linear upper bound of the Leray projection term for the controlled scheme on a transformed time scale. We have indicated the reasons for the classical Navier Stokes equation above. We shall show that the definition
\begin{equation}\label{control1def}
\delta r^l_i:=r^l_i-r^{l-1}_i(l-1,.)=-\delta v^{\rho,l,1}_i=-\left( v^{\rho,l,1}_i-v^{r,\rho,l-1}_i(l-1,.)\right) 
\end{equation}
leads to a global scheme for the generalized systems of equations considered in this paper, if the conditions of a certain local contraction result are satisfied. These conditions are a bit stronger than the conditions we needed for the local contraction result in \cite{KB2} and \cite{KB3}.
We then extend the definition in (\ref{control1def}), where we add source functions
\begin{equation}\label{control1def}
\begin{array}{ll}
\delta r^l_i:=r^l_i-r^{l-1}_i(l-1,.)=-\delta v^{\rho,l,1}_i=-\left( v^{\rho,l,1}_i-v^{r,\rho,l-1}_i(l-1,.)\right)\\
\\
+\int_{l-1}^{\tau}\phi^{l}_i(s,y)G_H(\tau-s,x-y)dyds,
\end{array}
\end{equation}
where $G_H$ is the fundamental solution $[l-1,l]\times {\mathbb R}^n$ of the H\"{o}rmander diffusion
\begin{equation}
	\label{hoer2}
	\begin{array}{ll}
		\frac{\partial u}{\partial t}=\frac{1}{2}\sum_{i=1}^mV_{i}^2u+V_{0}u.
	\end{array}
\end{equation}
We have remarked that the direct definition of a control function avoids a solution of an equation for the control function. We just define
\begin{equation}
\phi^l_i(\tau,x)=\phi^{l,v}_i(\tau,x)+\phi^{l,r}_i(\tau,x)
\end{equation}
where
\begin{equation}
\phi^{l,r}_i(\tau,.)=-\frac{r^{l-1}_i(l-1,.)}{C} \mbox{ for }\tau\in \left[ l-1,l\right),
\end{equation}
and
\begin{equation}
\phi^{l,v}_i(\tau,.)=-\frac{v^{r,\rho,l-1}_i(l-1,.)}{C^2} \mbox{ for }\tau\in \left[l-1,l\right).
\end{equation}

In order to prove convergence of the global scheme for Navier Stokes equation models with H\"{o}rmander diffusion we use function spaces of polynomial decay. This is due to the polynomial growth factor with respect to the spatial variables for a priori estimates of the density. This factor appears in the Kusuoka Stroock estimate and cannot be avoided. We say that a function $g\in C^{\infty}\left( {\mathbb R}^n\right) $ has polynomial decay of order $m>0$ up to derivatives of order $p>0$ at infinity if for all multiindices $\alpha=(\alpha_1,\cdots,\alpha_n)$ with order $|\alpha|:=\sum_{i=1}^n\alpha_i\leq p$ we have
\begin{equation}
|D^{\alpha}_xg(x)|\leq \frac{C_{\alpha}}{1+|x|^m}
\end{equation}
for some finite constants $C_{\alpha}$. The following existence result is closely related to the Gaussian a priori estimate in (\ref{pxest}). As we shall see in detail in the proof the reason is that for $\beta =0$ the estimate
\begin{equation}\label{pxestbeta0}
\begin{array}{ll}
{\Bigg |}\frac{\partial^j}{\partial t^j} \frac{\partial^{|\alpha|}}{\partial x^{\alpha}} p(t,x,y){\Bigg |}
\leq \frac{A_{j,\alpha,0}(\tau)(1+x)^{m_{j,\alpha,0}}}{t^{n_{j,\alpha,0}}}\exp\left(-B_{j,\alpha,0}(\tau)\frac{(x-y)^2}{\tau}\right)
\end{array}
\end{equation}
has, compared to usual Gaussian estimates of fundamental solution of operators with strictly elliptic spatial part, an additional factor $(1+x)^{m_{j,\alpha,0}}$, and any global scheme has to compensate this factor of polynomial growth. Note that this factor appears naturally in the estimates as we have shown in our alternative construction of the result in \cite{KH}.

\begin{thm}
Assume that the initial data $h_i\in C^{\infty},~1\leq i\leq n$ satisfy a polynomial decay condition, where for an integer $p\geq 2$ and for $|\alpha|\leq p$
\begin{equation}
{\big |}D^{\alpha}_xh_i(x){\big |}\leq \frac{C}{1+|x|^{q}}
\end{equation}
for
\begin{equation}
q\geq \max_{j,|\alpha|\leq p}\left\lbrace n_{j,\alpha,0},3m_{j,\alpha,0}\right\rbrace +2n+2
\end{equation}
Furthermore, assume that the vector fields $V_i,~0\leq i\leq n$ satisfy the H\"{o}rmander condition (\ref{Hoer}).
Then the Cauchy problem in (\ref{nsgenint})
has a global solution $\mathbf{v}=(v_1,\cdots ,v_n)^T$ with $v_i\in C^{p}\left( [0,\infty)\times {\mathbb R}^n\right) $.
\end{thm}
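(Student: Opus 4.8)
The plan is to construct $\mathbf v$ as the (time-transformed) limit of the controlled time-step scheme of the introduction, with step size $\rho_l\sim\frac1l$ and control increments $\delta r^l_i$ as in (\ref{control1def}), i.e. $\delta r^l_i=-\delta v^{\rho,l,1}_i+\int_{l-1}^{\tau}\phi^l_i(s,y)G^l_H(\tau-s,x-y)\,dy\,ds$ with the damping source $\phi^l_i=\phi^{l,v}_i+\phi^{l,r}_i$ displayed after (\ref{control1def}). The whole argument is an induction on the step number $l\ge1$ whose inductive hypothesis asserts, for the reduced decay order $q^\ast:=q-(2n+2)$ and all $|\alpha|\le p$, that $|D^\alpha_x v^{r,\rho,l-1}_i(l-1,\cdot)|$ and $|D^\alpha_x r^{l-1}_i(l-1,\cdot)|$ are $\le C(1+|x|)^{-q^\ast}$ with a fixed constant $C$ independent of $l$; the base case $l=1$ is the hypothesis on $\mathbf h$, with $r^0\equiv0$. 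On $[l-1,l]\times\mathbb R^n$ the operator $\rho_l\frac12\sum_{j=0}^mV_j^2$ is a H\"ormander generator, whose $\rho_l$-scaled fundamental solution $G^l_H$ obeys the Kusuoka--Stroock bounds (\ref{pxest}); only the $\beta=0$ version (\ref{pxestbeta0}) is needed, with its spatial growth factor $(1+x)^{m_{j,\alpha,0}}$ and time singularity $t^{-n_{j,\alpha,0}}$. Note that the hypothesis on $q$ gives $q^\ast\ge\max_{j,|\alpha|\le p}\{n_{j,\alpha,0},3m_{j,\alpha,0}\}$.

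\textbf{Local step.} For fixed $l$ and data as above, I first solve the uncontrolled problem (\ref{qparasystnav2hoerl}) by the functional series (\ref{funciv}), with $v^{\rho,l,1}_i$ solving (\ref{scalparasystlin10v}) and $\delta v^{\rho,l,k+1}_i$ solving (\ref{deltaurhok0*******}). Writing each term by Duhamel against $G^l_H$, and convolving the nonlinear/Leray term with $K^{\mathrm{ell}}_{n,i}\in O(|x-y|^{1-n})$, I prove the contraction estimate
\begin{equation*}
\left|\delta v^{\rho,l,k+1}_i\right|_{C^0([l-1,l],X_{q^\ast,p})}\le\frac{c}{\sqrt l}\left|\delta v^{\rho,l,k}_i\right|_{C^0([l-1,l],X_{q^\ast,p})}
\end{equation*}
with $c$ independent of $l$, where $X_{q^\ast,p}$ is the Banach space of functions whose spatial derivatives up to order $p$ decay like $(1+|x|)^{-q^\ast}$. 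The estimate rests on three facts: (i) convolution with $G^l_H$ maps $X_{a,p}$ into $X_{a-\bar m,p}$ with $\bar m=\max_{j,|\alpha|\le p}m_{j,\alpha,0}$, since the Gaussian in (\ref{pxestbeta0}) concentrates in an $O(\sqrt{\rho_l})$-window about $x=y$, so the factor $(1+x)^{m_{j,\alpha,0}}$ costs only $m_{j,\alpha,0}$ orders of decay; (ii) the singular time weights $t^{-n_{j,\alpha,0}}$ are Duhamel-integrable over $[l-1,l]$, which is where the condition $q^\ast\ge\max_j n_{j,\alpha,0}$ enters; (iii) products in $X$ together with convolution of the quadratic nonlinearity against the weakly singular, slowly decaying Riesz-type kernel $K^{\mathrm{ell}}_{n,i}$ keep the result in $X_{q^\ast,p}$, which consumes the margin $2n+2$ (roughly $2n$ for the Riesz convolution and the Young estimates, while the factor $3$ in $3m_{j,\alpha,0}$ accommodates the up to three growth factors $(1+x)^{m_{j,\alpha,0}}$ accumulating per step --- one from the Duhamel kernel and one from each of the two factors of the nonlinearity). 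For $l$ large, $c/\sqrt l<\tfrac12$, and the finitely many small-$l$ steps are absorbed by shrinking $\rho_l$. Summation of the series then yields a local classical solution of (\ref{qparasystnav2hoerl}) by the usual consistency argument, together with the quantitative increment bounds (\ref{firstincrg0})--(\ref{secondincrgr0}) in $X_{q^\ast,p}$: $\delta v^{\rho,l,1}_i$ carries one $\rho_l\sim\frac1l$ and is quadratic in data of size $\sim\sqrt l$, hence $\sim1$, while $\sum_{k\ge2}\delta v^{\rho,l,k}_i\sim\frac1{\sqrt l}$.

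\textbf{Control, inheritance, globalisation.} With $\delta r^l_i$ as above the first increment is cancelled, and by (\ref{funciv3}) we have $v^{r,\rho,l}_i=v^{r,\rho,l-1}_i(l-1,\cdot)+\sum_{k\ge2}\delta v^{\rho,l,k}_i+\int_{l-1}^{\tau}\phi^l_i(s,y)G^l_H(\tau-s,x-y)\,dy\,ds$. Since $G^l_H$ is an approximate identity on a window of length $\rho_l$ while $\phi^l_i$ carries no $\rho_l$, at $\tau=l$ the last term equals $-\frac1{C^2}v^{r,\rho,l-1}_i(l-1,\cdot)-\frac1C r^{l-1}_i(l-1,\cdot)$ up to $O(\rho_l)$. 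Hence $|D^\alpha_x v^{r,\rho,l}_i(l,\cdot)|\le(1-\tfrac1{C^2})|D^\alpha_x v^{r,\rho,l-1}_i(l-1,\cdot)|+\tfrac{c'}{\sqrt l}+O(\rho_l)$ and $|D^\alpha_x r^l_i(l,\cdot)|\le(1-\tfrac1C)|D^\alpha_x r^{l-1}_i(l-1,\cdot)|+O(1)$ for $|\alpha|\le p$; choosing $C$ large and closing these recursions (the damping beats the bounded residuals for $l$ large, the finitely many initial steps being absorbed into the constant) yields a uniform bound $|D^\alpha_x v^{r,\rho,l}_i|,|D^\alpha_x r^l_i|\le C_\ast$ for all $l$ and $|\alpha|\le p$, with decay order $q^\ast$ inherited because $\delta v^{\rho,l,1}_i$, $\sum_{k\ge2}\delta v^{\rho,l,k}_i$ and the $\phi^l$-convolution all lie in $X_{q^\ast,p}$ by the local step. (The simpler control $\delta r^l_i=-\delta v^{\rho,l,1}_i$ of item (ii) works equally well, giving instead $v^{r,\rho,l}_i\sim\sqrt l$, $r^l_i\sim l$, which is still enough.) The local pieces glue across steps since the data match, $\mathbf v^{r,\rho,l}(l,\cdot)=\mathbf v^{r,\rho,l+1}(l,\cdot)$; undoing $t=\rho_l\tau$ gives $\mathbf v^r$, the control increments telescope out of (\ref{Navleraycontrolledlintgen}), and $\mathbf v:=\mathbf v^r-\mathbf r$ solves (\ref{nsgenint}) on $[0,\infty)\times\mathbb R^n$ --- genuinely global because $\sum_{m\le l}\rho_m\sim\ln l\to\infty$, which is exactly what $\rho_l\sim\frac1l$ buys. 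Finally $v_i\in C^p$ in the spatial variables follows from the uniform $X_{q^\ast,p}$-bounds ($q^\ast>0$ as $p\ge2$), and the remaining time regularity is read off from (\ref{nsgenint}) together with the parabolic/hypoelliptic smoothing of $G^l_H$.

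\textbf{Main obstacle.} The crux is the weighted local contraction of the second step: controlling the Leray projection integral --- whose kernel is both weakly singular, $O(|x-y|^{1-n})$, and only slowly decaying --- acting on functions in a polynomial-decay space, simultaneously with the spatial growth factor $(1+x)^{m_{j,\alpha,0}}$ of the Kusuoka--Stroock bound, and then tracking the decay-order budget precisely enough to see that $q\ge\max_{j,|\alpha|\le p}\{n_{j,\alpha,0},3m_{j,\alpha,0}\}+2n+2$ is exactly what is required. Once this is established, the remainder is a controlled adaptation of the constructions of \cite{KB2,KB3}.
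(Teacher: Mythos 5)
Your overall strategy is the paper's: time-stepping with $\rho_l\sim 1/l$, local solution by the Duhamel/functional-series representation against $G^l_H$ and $K^{\mbox{ell}}_{n,i}$, a contraction estimate in spaces of polynomial spatial decay so as to absorb the Kusuoka--Stroock growth factor $(1+x)^{m_{j,\alpha,0}}$, a control increment cancelling $\delta v^{\rho,l,1}_i$ plus a damping source, then linear (or uniform) growth bounds, gluing, and undoing the time transformation. But your local step has a genuine gap in how the kernel bounds are used. Your point (ii) --- that the time weights $t^{-n_{j,\alpha,0}}$ in (\ref{pxestbeta0}) are ``Duhamel-integrable over $[l-1,l]$, which is where $q^\ast\ge\max_j n_{j,\alpha,0}$ enters'' --- is not correct: $n_{j,\alpha,0}$ is in general $\ge 1$ (it grows with the derivative order and the bracket depth at which the H\"ormander condition holds), in which case $\int_{l-1}^{\tau}(\tau-s)^{-n_{j,\alpha,0}}\,ds$ diverges, and no amount of spatial decay of the data (which is what $q$ buys) can repair a time singularity. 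The paper avoids this by splitting every Duhamel integral into a near-diagonal part on $B_{\epsilon}(x)$ and its complement: off the diagonal the Gaussian factor $\exp\left(-B(x-y)^2/t\right)$ tames the time singularity, while on $B_{\epsilon}(x)$ one needs the weakly singular local bounds (\ref{apriorih0})--(\ref{apriorih1}), which are available only for the density and its first-order spatial derivatives.

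Relatedly, for $|\alpha|\ge 2$ you cannot simply put $D^{\alpha}_x$ on $G^l_H$ in the near-diagonal part, nor invoke a convolution rule to move derivatives onto the data: for variable H\"ormander vector fields $G^l_H(\tau,x;s,y)$ is not of the form $G^l_H(\tau-s,x-y)$, so shifting derivatives requires the local adjoint $G^{l,*}_H$ (with $D^{\alpha}_xG^l_H$ rewritten through $D^{\alpha}_yG^{l,*}_H$ locally and then partial integration onto the smooth, decaying data), which is precisely the device the paper takes from \cite{KH} and uses both in the growth-control sections and in the proof of the local contraction result. Your claim that convolution with $G^l_H$ maps $X_{a,p}$ into $X_{a-\bar m,p}$ for all derivative orders up to $p$ silently assumes both of these missing ingredients; as justified, the contraction estimate for $|\alpha|\ge 2$ would fail. (A smaller point: your uniform-bound recursion ignores the sign interaction between $v^{r,\rho,l}_i$ and $r^{l}_i$ coming from the term $-r^{l-1}_i/C$ in $\phi^l_i$ --- the paper treats this only as a sketched refinement --- but, as you note and as the paper does, the linear bound obtained from the simple control already suffices for the theorem.)
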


The structure of the proof of this theorem is as follows. The local contraction result stated in the next section and proved in the last section of this paper implies the existence of local regular solutions (as we shall observe at the end of this section). Then for different types of dynamically defined control functions listed above we get for the control function ii) a linear    
upper bound for the Leray projection term of the controlled Navier Stokes equation system and a linear upper bound for the control function, or for the control function i) a linear upper bound for controlled value functions of the Navier Stokes equation system and a linear upper bound of the control function. For the choice in iii) we can improve this in order to get a global upper bound which is independent of the time step number, and therefore a global uniformly bounded regular solution. Similar for the method in iv).

At the end of this section we consider the announced consequence of local regular existence of the local contraction result.

We consider the inductive construction of local regular solutions on $[l-1,l]\times {\mathbb R}^n$ by the local scheme above. At each time step $l\geq 1$ having constructed $v^{\rho,l-1}_i(l-1,.)\in C^m\cap H^m$ for $m\geq 2$ at time step $l-1$ (at $l=1$ these are just the initial data $h_i$),
as a consequence of local contraction below with respect to the norm $|.|_{C^1\left((l-1,l),  H^{m}\right) }$ for $m\geq 2$ we a have a time-local pointwise limit $v^{*,\rho,l}_i(\tau,.)=v^{*,\rho,l-1}_i(\tau,.)+\sum_{k=1}^{\infty}\delta v^{\rho,l,k}_i(\tau,.)\in H^m\cap C^m$ for all $1\leq i\leq n$, where for $n=3$ we have $H^2\subset C^{\alpha}$ uniformly in $\tau\in [l-1,l]$. For higher dimension the contraction has to be established at least for $m\geq \frac{n}{2}$ accordingly. Furthermore the functions of this series are even locally continuously differentiable with respect to $\tau\in [l-1,l]$ and hence H\"{o}lder continuous with respect to time.  Note that a local contraction below with respect to the norm $|.|_{C^0\left((l-1,l),  H^{m}\right) }$ is sufficient for our purposes as  we may prove that the first order time derivative $\frac{\partial}{\partial \tau}v^{\rho,l,k}_i(\tau,.)$  exist in $H^m$ as well for appropriate $m$ ($m>\frac{5}{2}$ is sufficient for $n=3$)  a consequence of the product rule for Sobolev spaces. We observe that $v^{\rho,l,k}_i(\tau,.)\in H^{m}$ can be obtained inductively for all $k$ for each given $m\geq 2$ and this leads to full local regularity of the limit function of the local scheme. 
If we plug in the approximating function $v^{\rho,l,k}_i(\tau,.)$ into the local incompressible highly degenerate Navier-Stokes equation system in its the Leray projection form in (\ref{nsgenint}), then from (\ref{deltaurhok0}) and from $\lim_{k\uparrow\infty}\delta v^{\rho,l,k}_j(\tau,x)=0$ and $\lim_{k\uparrow \infty}\frac{\partial \delta v^{\rho,l,k}_i}{\partial x_j}=0$ for all $(\tau,x)\in [l-1,l]\times {\mathbb R}^n$ pointwise by our local contraction result we get
\begin{equation}\label{deltaurhok0**}
\left\lbrace \begin{array}{ll}
\lim_{k\uparrow \infty}\frac{\partial \delta v^{\rho,l,k+1}_i}{\partial \tau}-\rho_l\frac{1}{2}\sum_{j=0}^mV_{j}^2\delta v^{\rho,l,k+1}_i=\\
\\ 
-\lim_{k\uparrow \infty}\rho_lV_{B}\left[v^{\rho,l,k}\right] \delta v^{\rho,l,k}_i-\rho_lV_{B}\left[\delta v^{\rho,l,k}\right] v^{\rho,l,k}_i\\ 
\\
\rho_l\lim_{k\uparrow \infty}\int_{{\mathbb R}^n}K^{\mbox{ell}}_{n,i}(x-y){\Big (} c_{jm}\left( \sum_{j,m=1}^n\left( v^{\rho,l,k}_{m,j}+v^{\rho,l,k-1}_{m,j}\right)(\tau,y) \right)  \delta v^{\rho,l,k}_{j,m}(\tau,y) {\Big)}dy=0\\
\\
\lim_{k\uparrow \infty}\mathbf{\delta v}^{\rho,l,k+1}(l-1,.)= 0,
\end{array}\right.
\end{equation}
which implies that $\lim_{k\uparrow \infty}\mathbf{\delta v}^{\rho,l,k+1}= 0$ and similar for spatial derivatives up to second order. Hence, the functions $v^{\rho,l}_i=v^{\rho,l}_i+\sum_{k=1}^{\infty}\delta v^{\rho,l,k}_i$ satisfy  
a local form of the equation in (\ref{nsgenint}) in a classical sense. Higher regularity of local solutions can be obtained then considering equations for the derivatives. This can be shown also directly by deriving equations for $v^{\rho,l,k}_i$ plugging this into the equation system in (\ref{nsgenint}), and estimating the deficit on the right side
by an expression in terms of functional increments which then go to zero as the local iteration index goes to infinity.

\section{Statement of local contraction result} 
It is essential to prove local contraction results with respect to the local norms
\begin{equation}
{\big |}f{\big |}^l_{C^{0}\left((l-1,l),H^{m}\right) }:=\sup_{\tau\in (l-1,l)}\sum_{|\alpha|\leq m}{\Big |}D^{\alpha}_xf(\tau,.){\Big |}_{L^2\left( {\mathbb R}^n \right) }
\end{equation}
for some $m\geq 2$. In the case of a generalized model we need to state the local contraction results with respect to the higher order correction terms, i.e., the terms $\delta v^{r,\rho,l,k}_i$ for $k\geq 2$. For the first order increment $\delta v^{\rho,l,1}_i$ we may loose some order of polynomial decay in the estimate due to natural estimates of the H\"{o}rmander density. We emphasize that we consider here the indirect approach: at each time step $l\geq 1$ we assume that the controlled functions $v^{r,\rho,l-1}_{i}$ are determined (hence especially the initial data $v^{r,\rho,l-1}_{i}(l-1,.)$ at time step $l\geq 1$ of our scheme, and we determine a local solution 
\begin{equation}
v^{\rho,l}_i=v^{r,\rho,l-1}_i(l-1,.)+\sum_{k=1}^{\infty}\delta v^{\rho,l,k}_i,
\end{equation}
where we have a contraction result for the higher order terms $\delta v^{\rho,l,k}_i$ for $k\geq 2$ and $1\leq i\leq n$. In the indirect approach we determine a the local solution of the incompressible Navier Stokes equation starting with controlled function data $v^{r,\rho,l-1}_i(l-1,.)$ but without further involvement of the control function, i.e., involvement of the control function $r^l_i(\tau,.)$ for $\tau>l-1$ in the first substep, i.e., we determine
\begin{equation}
v^{\rho,l}_i:=v^{\rho,l,1}_i+\sum_{j=2}^{\infty}\delta v^{\rho,l,k}_i
\end{equation}
where $\delta v^{\rho,l,1}_i$ solves (\ref{scalparasystlin10v23}) and
the functional increments $\delta v^{\rho,k+1,l}_i=v^{\rho,k+1,l}_i-v^{\rho,k,l}_i,~1\leq i\leq n$ then solve the equation (\ref{deltaurhok023}).
Then in this scheme we define
\begin{equation}
v^{r,\rho,l,1}_i:=v^{\rho,l,1}_i+\delta r^l_i,
\end{equation}
and in general for the approximation of order $k\geq 2$
\begin{equation}
v^{r,\rho,l,k}_i:=v^{\rho,l,k}_i+\delta r^l_i,
\end{equation}
as we have $r^{l-1}_i(l-1,.)$ in the definition of $v^{\rho,l,1}_i$ (via equation  (\ref{scalparasystlin10v23})), and
where the increment $\delta r^l_i=r^{l}_i-r^{l-1}_i$ is chosen at each time step such that the control function and the controlled value function have at most linear growth with respect to the time step number in transformed time coordinates. For the direct approach mentioned in the introduction we would have to establish local contraction results for controlled functions $v^{r,\rho,l,k}_i$, where the equation for $\delta v^{r,\rho,l,k}_i$ involves a relation of the control function and the subiteration index $k\geq 1$. This is much more cumbersome (although possible).
Note that we choose the control functions in general such that the increment of the first substep $\delta v^{r,\rho,l,1}_i$ is cancelled. The indirect construction mentioned allows us to establish a contraction result without referring to a control function and then use this contraction result in the controlled scheme. 
We need some assumption on the initial data. At time step $l\geq 1$ and for a given order of the norm $m\geq 2$ we assume that from the previous time step $l-1$ there is a constant $C^{l-1}$ such that 
\begin{equation}
\sum_{|\alpha|\leq m}\sup_{x\in {\mathbb R}^n}{\big |}D^{\alpha}_xv^{r,\rho,l-1}_i(l-1,x){\big |}\leq \frac{C^{l-1}}{1+|x|^{q}},
\end{equation}
i.e., the function $D^{\alpha}_x v^{r,\rho,l-1}_i(l-1,.)$ satisfy polynomial decay for $0\leq |\alpha|\leq m$ of order $q=3m_{0,\alpha,0}+2n+2$, where the former integer $m_{0,\alpha,0}$ is from the statement of the Kusuoka-Stroock a priori estimate above. 
\begin{rem}
For some schemes in our list, notably for the scheme in ii) the constant $C^l$ depends on the time step number $l\geq 1$, i.e., we have the inductive assumption
\begin{equation}\label{initialsizel}
|v^{r,\rho,l-1}_i(l-1,.)|^2_{H^2}\leq C^{l-1}=C+(l-1)C
\end{equation}
for some $C>0$. We shall observe that for the scheme iii) in our list $C^l$ can be chosen independent of the time step number $l\geq 1$.
\end{rem}
The strong assumption of polynomial decay and the inheritance of polynomial decay of the schemes observed in the next section simplify the reasoning for local contraction. At each local iteration step $k\geq 1$ at time step $l\geq 1$ we use upper bounds for classical representations of $\delta v^{\rho,l,k+1}_i$ which are convolutions. These convolutions may then be estimated by a Young inequality of the form
\begin{equation}
{\big |}f\ast g{\big |}_{L^r}
\leq {\big |}f{\big |}_{L^p}{\big |}g{\big |}_{L^q},
\end{equation}
where $1+r^{-1}=p^{-1}+q^{-1}$ for some $1\leq p,q,r\leq \infty$. We shall use inductive information about polynomial decay of the value function at the previous time step, i.e., information as in (\ref{contractionuseinf}) below. Some terms in the representation of $\delta v^{\rho,l,k+1}$ appear also in the associated multivariate Burgers equation. Next to estimates for the H\"{o}rmander density these terms are naturally estmated using the constant 
\begin{equation}\label{CB}
C^m_B\sim \sum_{|\alpha|\leq m}\max_{i\in \left\lbrace 1,\cdots ,n\right\rbrace }\sup_{y\in {\mathbb R}^n} {\big |}D^{\alpha}_yB_{i}(y){\big |}.
\end{equation}
Similarly, we define
\begin{equation}\label{CB}
C^m_{ij}\sim \sum_{|\alpha|\leq m}\max_{i\in \left\lbrace 1,\cdots ,n\right\rbrace }\sup_{y\in {\mathbb R}^n} {\big |}D^{\alpha}_yc_{ij}(y){\big |}.
\end{equation}
The Leray projection terms are a little more complicated. The upper bound we use are double convolutions. We may use local $L^1$-estimates for an upper bound of a truncated  H\"{o}rmander density, and this leads to the requirement of $L^2$-estimates of the convolution involving (first order derivatives) of the Laplacian kernel or its natural generalisation and products of local approximating value functions and functional increments. Upper bounds of the local H\"{o}rmander density $G^l_H$ leave us with $L^2$ estimates of convolutions involving first order derivatives of (generalised) Laplacian kernels and products of approximating value functions. As we have polynomial decay of the latter via inheritance of polynomial decay and the inductive assumption of polynomial decay, it is natural to estimate this 'inner' convolution by a combination of Young inequalities and weighted product estimates of $L^2$ norms. We may use estimates of the form where
for $s>\frac{n}{2}$ we have a constant $C_s>0$  such that for all $x\in {\mathbb R}^n$ the function
\begin{equation}
u(x):=\int_{{\mathbb R}^n}(1+|y|^2)^{-s/2}v(x-y)w(y)dy
\end{equation}
with functions $v,w\in L^2$ satisfies
\begin{equation}
|u|_{L^2}\leq C_s|v|_{L^2}|w|_{L^2}.
\end{equation}
Without loss of generality we may assume that $C_s\geq 1$. For $s=\frac{n}{2}+1$ this leads to the natural constant
\begin{equation}\label{CK}
C_K\sim\max_{i\in {1,\cdots,n}}\int_{{\mathbb R}^n}{\big |}K_{n,i}(.-y){\big |}\frac{1}{1+|y|^{n}}dy.
\end{equation}
The proportionality in $C_K$ is a finite constant dependent on dimension, the maximal order $m$ of derivatives considered.
As we indicated $L^1$-upper bounds of the density $G^l_H$ and its first order spatial derivatives are  related to another estimation constant  $C_G$, i.e.,
\begin{equation}\label{CG}
\begin{array}{ll}
C_G\mbox{ is related to }{\big |}G^l_{H,i}{\big |}_{L^1\times H^1}.
\end{array}
\end{equation}
Upper bounds of the right side in case of the local Gaussian are well known. For the H\"{o}rmander density consider our discussion below and in \cite{KH}. 
We have the following local contraction result.
\begin{thm}\label{mainthm1}
Let $n\geq 3$. Assume that  for $1\leq i\leq n$ and $m\geq 2$ and multiindices $\alpha$ with $|\alpha|\leq 2$ we have
such that for all $x\in {\mathbb R}^n$
\begin{equation}\label{contractionuseinf}
{\big |}D^{\alpha}_xv^{\rho,l-1}_i(l-1,x){\big |}\leq \frac{C^{l-1}}{1+|x|^{q}}
\end{equation}
for
\begin{equation}
q\geq \max_{|\alpha|\leq m}\left\lbrace 2,m_{j,\alpha,0}\right\rbrace +2n+2.
\end{equation}
Then we have local contraction results with respect to the $C^0\times H^{2m}$-norm \begin{equation}\label{contractionconstant}
\rho_l\leq \frac{1}{c(n)\left(  \left( 2C^m_BC_G+C_K\sum_{j,p=1}^nC^m_{jp}\right) 2\left( C^{l-1}+1\right)\right) }
\end{equation}
(along with  $C_G,C_K$ and $C_s$ defined above) for $k\geq 2$ we have 
\begin{equation}\label{contract}
\begin{array}{ll}
\max_{i\in \left\lbrace 1,\cdots ,n\right\rbrace }|\delta v^{\rho,l,k}_i|_{C^0\left( (l-1,l), H^m\right) }
\leq \frac{1}{2\sqrt{l}}\max_{i\in \left\lbrace 1,\cdots ,n\right\rbrace }|\delta v^{\rho,l,k-1}_i|_{C^0\left((l-1,l),H^m\right) },
\end{array} 
\end{equation}
and for $k=1$ and $\rho_l$ small enough we have
\begin{equation}\label{delta1remark}
\begin{array}{ll}
\max_{i\in \left\lbrace 1,\cdots ,n\right\rbrace }|\delta v^{\rho,l,1}_i|_{C^0\left((l-1,l),H^m\right) }\\
\\
=\max_{i\in \left\lbrace 1,\cdots ,n\right\rbrace }|v^{\rho,l,1}-v^{\rho,l-1}(l-1,.)|_{C^0\left((l-1,l),H^m\right) }\leq \frac{1}{4}.
\end{array}
\end{equation}
If
\begin{equation}
q\geq \max_{j\leq m,|\alpha|\leq 2m}\left\lbrace n_{j,\alpha},3m_{j,\alpha,0}\right\rbrace +2n+2.
\end{equation} then an analogous contraction result with respect to the $|.|_{H^{m,\infty}\times H^{2m}}$ norm holds, and with a time step size $\rho_l$ proportional to (\ref{contractionconstant}) holds, where the proportional constant depends only on the dimension, the order $2m$, and an additional constant related to estimation of products of functions by their factors in Sobolev spaces.
\end{thm}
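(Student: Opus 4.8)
The plan is to read off \eqref{contract} and \eqref{delta1remark} from the Duhamel representations of the increments $\delta v^{\rho,l,k+1}_i$ in terms of the fundamental solution $G^l_H$ of the time--transformed H\"ormander diffusion $\partial_\tau-\rho_l\frac12\sum_{j=0}^mV_j^2$ on $[l-1,l]\times{\mathbb R}^n$, and then to estimate term by term the right--hand side of the increment equation \eqref{deltaurhok0*******}. For $k\ge 2$ one writes
\[
\delta v^{\rho,l,k+1}_i(\tau,x)=\int_{l-1}^{\tau}\!\!\int_{{\mathbb R}^n}G^l_H(\tau,x;s,y)\,\rho_l\,F^{l,k}_i(s,y)\,dy\,ds,
\]
where $F^{l,k}_i$ is the sum of the two Burgers--type terms $-V_B[v^{\rho,l,k}]\delta v^{\rho,l,k}_i-V_B[\delta v^{\rho,l,k}]v^{\rho,l,k}_i$ and the Leray term built from $\int K^{\mbox{ell}}_{n,i}(\cdot-z)\,\big(\sum_{j,m}c_{jm}(v^{\rho,l,k}_{m,j}+v^{\rho,l,k-1}_{m,j})\,\delta v^{\rho,l,k}_{j,m}\big)(s,z)\,dz$. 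I would bound $D^\alpha_x\delta v^{\rho,l,k+1}_i$, $|\alpha|\le m$, by moving spatial derivatives off $G^l_H$ onto $F^{l,k}_i$ --- integrating the single first--order derivative present in each Burgers term partly by parts so that at most one derivative falls on $G^l_H$, which produces the $L^1$--type constant $C_G$ of \eqref{CG} --- thereby keeping the time--singularity exponents $n_{j,\alpha,0}$ of Theorem~\ref{stroock} integrable against $ds$; the kernel $K^{\mbox{ell}}_{n,i}$ is only weakly singular, $O(|x-y|^{1-n})$, and commutes with $D^\alpha$.

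The main obstacle, absent in the strictly parabolic schemes of \cite{KB2,KB3}, is that the Kusuoka--Stroock estimate \eqref{pxestbeta0} carries the spatial growth factor $(1+x)^{m_{j,\alpha,0}}$, so $G^l_H(\tau,\cdot\,;s,\cdot)$ is \emph{not} in $L^1_x$ and a bare Young inequality is unavailable. I would handle this by splitting $G^l_H$ into a near--diagonal piece, on which the Gaussian factor $\exp\big(-B_{j,\alpha,0}(x-y)^2/(\rho_l(\tau-s))\big)$ dominates the polynomial growth and yields a genuine $L^1_x$ bound, and a far--field piece, on which part of the Gaussian decay is traded for polynomial decay in $|x-y|$ and paired against the inductively assumed polynomial decay \eqref{contractionuseinf} of the data, hence of $v^{\rho,l,k-1}$ and $v^{\rho,l,k}$ (which differ from the data by a controlled amount once the contraction is in force). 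This is precisely why the hypothesis demands $q\ge\max_{|\alpha|\le m}\{2,m_{j,\alpha,0}\}+2n+2$: beyond the order $m_{j,\alpha,0}$ needed to absorb the growth factor, the surplus $2n+2$ leaves room for one $L^2$ product estimate and one additional spatial convolution with $K^{\mbox{ell}}_{n,i}$ while keeping all factors in $L^1\cap L^2$. The Burgers products are then controlled by the algebra/Moser inequality in $H^m$ together with the coefficient constant $C^m_B$ of the vector field $V_B$, and the inner Leray convolution by the weighted $L^2$ product inequality with constant $C_s$ and the constant $C_K$ of \eqref{CK}, once again using the polynomial decay to kill the non--integrable tail of $K^{\mbox{ell}}_{n,i}$.

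Assembling these estimates gives, for $k\ge 2$,
\[
\max_i|\delta v^{\rho,l,k}_i|_{C^0((l-1,l),H^m)}\le \rho_l\,c(n)\,\Big(2C^m_BC_G+C_K\!\sum_{j,p=1}^nC^m_{jp}\Big)\,2\big(C^{l-1}+1\big)\,\max_i|\delta v^{\rho,l,k-1}_i|_{C^0((l-1,l),H^m)},
\]
using that the approximants obey $|v^{\rho,l,k}_i|_{H^m},|v^{\rho,l,k-1}_i|_{H^m}\le C^{l-1}+1$ (the total increment mass being bounded once contraction holds). With $\rho_l$ chosen as in \eqref{contractionconstant} the displayed prefactor is $\le 1$; and since \eqref{contractionconstant} in fact forces $\rho_l$ to be of order $1/C^{l-1}$ while the genuinely bilinear terms only feel the $H^m$--size $\sim\sqrt{C^{l-1}}$ of the approximants, the sharper bound $\rho_l\cdot c(n)(\cdots)\sqrt{C^{l-1}}\lesssim 1/\sqrt{C^{l-1}}\le\tfrac{1}{2\sqrt l}$ follows (with $C^{l-1}$ of order $l$), which is \eqref{contract}. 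For $k=1$, the linear equation defining $v^{\rho,l,1}_i$ yields $\delta v^{\rho,l,1}_i=\big(\int G^l_H(\tau,\cdot\,;l-1,y)\,v^{r,\rho,l-1}_i(l-1,y)\,dy-v^{r,\rho,l-1}_i(l-1,\cdot)\big)+\rho_l(\cdots)$; the bracket is the defect of the H\"ormander semigroup against the identity, which by \eqref{pxestbeta0} and a first--order Taylor estimate (again truncating to tame the growth factor) is $O(\rho_l)$ times an $H^{m+2}$--type norm of the data, and the remaining term is $\rho_l$ times a data--dependent bound, so $\max_i|\delta v^{\rho,l,1}_i|_{C^0((l-1,l),H^m)}\le\tfrac14$ for $\rho_l$ small --- this is \eqref{delta1remark}. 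Finally the $|\cdot|_{H^{m,\infty}\times H^{2m}}$ version follows along the same lines, except that pointwise control of $D^\alpha_x\delta v^{\rho,l,k}_i$ invokes \eqref{pxest} with the time exponents $n_{j,\alpha}$ and a larger polynomial--growth budget --- hence the sharper requirement $q\ge\max_{j\le m,|\alpha|\le 2m}\{n_{j,\alpha},3m_{j,\alpha,0}\}+2n+2$ --- and the time step size is taken proportional to \eqref{contractionconstant} with a proportionality constant depending only on $n$, on $2m$, and on the Sobolev product constant.
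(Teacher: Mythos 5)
Your overall strategy---Duhamel representations of the increments with the density $G^l_H$, pairing the Kusuoka--Stroock polynomial growth factor against the inductively inherited polynomial decay of the approximants, and closing with the constants $C^m_B,C_G,C_K,C_s$ and the choice of $\rho_l$ in (\ref{contractionconstant})---is the same skeleton as the paper's proof. But there is a genuine gap in the way you propose to handle spatial derivatives. You write that you will ``move spatial derivatives off $G^l_H$ onto $F^{l,k}_i$'' by partial integration so that at most one derivative falls on the density. For the classical model this is legitimate because $G_l(\tau-s,x-y)$ is a convolution kernel, so $D^\alpha_x$ can be exchanged for $D^\alpha_y$ up to sign and the convolution rule applies. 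The H\"ormander density $G^l_H(\tau,x;s,y)$ is not translation invariant, so $x$-derivatives cannot be traded for $y$-derivatives by integration by parts unless one has an adjoint relation $D^\alpha_x G^l_H(\tau,x;s,y)=D^\alpha_y G^{l,*}_H(s,y;\tau,x)$. For degenerate diffusions such an adjoint exists only locally, and constructing it (the local adjoint $G^{l,*}_H$ on a ball $B_\epsilon(x)$, cf.\ \cite{KH}) is precisely the ingredient the paper singles out as the second new aspect needed for the $H^m$-estimates with $m\geq 2$: near the diagonal only the density and its \emph{first} spatial derivatives have locally integrable (weakly singular) bounds, so derivatives of order $\geq 2$ must be shifted onto the data via the local adjoint and a Leibniz/partition-of-unity argument, as in (\ref{scalparasystlin10vhercrend1})--(\ref{deltaurhok0hercrend2}). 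Your proposal never introduces this object, yet the theorem claims contraction in $C^0\times H^{2m}$ and even $H^{m,\infty}\times H^{2m}$, so the step as you describe it would fail for $|\alpha|\geq 2$.

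A second, related weak point is your treatment of the near-diagonal piece. You argue that the Gaussian factor in (\ref{pxestbeta0}) dominates the polynomial growth there and that with one derivative on $G^l_H$ the time exponents $n_{j,\alpha,0}$ remain ``integrable against $ds$''. The Kusuoka--Stroock exponents $n_{j,\alpha,0}$ for a genuinely degenerate (H\"ormander, non-elliptic) diffusion are not in general less than one even for $|\alpha|=1$, so integrability near $s=\tau$ does not follow from (\ref{pxest}) alone. The paper avoids this by invoking the separate local weakly singular estimates (\ref{apriorih0}) and (\ref{apriorih1}), with exponents $\alpha\in(0,1)$ coming from the parametrix analysis of \cite{H} and \cite{KH}, for the density and its first derivatives on $B_\epsilon(x)$, and uses Kusuoka--Stroock only on the complement where the polynomial decay of the products of approximants absorbs the factor $(1+x)^{m_{0,\alpha,0}}$. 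Without these local bounds (or an equivalent substitute), your near-diagonal estimate and hence the definition and finiteness of the constant $C_G$ in (\ref{CG}) are not justified. The remaining parts of your argument---the algebra of polynomially decaying functions, the role of $q\geq\max\{\cdot\}+2n+2$, the $K^{\mbox{ell}}_{n,i}$ estimate via $C_K$, the $O(\rho_l)$ bound for $\delta v^{\rho,l,1}_i$, and the $\tfrac{1}{2\sqrt l}$ refinement from $\rho_l\sim 1/l$ against approximants of size $\sim\sqrt{l}$---do track the paper's reasoning.
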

\begin{rem}
 Note that the right side of (\ref{delta1remark}) is not zero even if we choose $r^l_i$ as in ii) (cf. our remark above that the meaning of the control function superscript depends on the time step number $l$).
\end{rem}

\section{Inheritance of polynomial decay for the higher order correction terms in the local scheme  } 
At each time step $l\geq 1$ having determined $v^{r,\rho,l-1}_i(l-1,.)$ we have to determine the increment
\begin{equation}
\delta v^{r,\rho,l}_i=\delta v^{\rho,l}_i+\delta r^l_i.
\end{equation}
This involves the increment $\delta v^{\rho,l}_i$ and the increment $\delta r^l_i$. The former is constructed by a local scheme and can be determined independently of the increment $\delta r^l_i$. The control function is designed in order to control the global growth properties of the scheme.  
Given $v^{r,\rho,l-1}_i(l-1,.),~1\leq i\leq n$ at time step $l-1$ the local solution function $v^{\rho,l}_i,1\leq i\leq n$ is constructed via the functional series
\begin{equation}\label{localfunc}
v^{\rho,l}_i=v^{r,\rho,l-1}_i(l-1,.)+\delta v^{\rho,l,1}_i+\sum_{k\geq 2}\delta v^{\rho,l,k}_i
\end{equation}
for $1\leq i\leq n$. Note the appearance of the control function in the first summand of this local series. The series is a controlled series but we suppressed the dependence on the control in order to keep the notation simple. The reason is that the dependence on the control function at time step $l$ concerns only the initial data $r^{l-1}_i(l-1,.)$ at that time step such that the structure of the local equation is exactly the same as the structure of the uncontrolled equation - just the data are different. The disadvantage is that we have a notation which equals the notation for local uncontrolled functional series, but having remarked this there should be no confusion.  We call the terms of the last sum in (\ref{localfunc}), i.e., the terms $v^{\rho,l,k}_i,k\geq 2$ the higher order correction terms, and for these terms we have inheritance of polynomial decay if the conditions of theorem \ref{mainthm1} are satisfied. These higher order terms satisfy the equation in (\ref{deltaurhok0}), which is identical to the equation of increments for higher order approximations of the local uncontrolled Navier Stokes equation. These terms depend only on the control function data $r^{l-1}_i(l-1,.)$, which appear in the equation for $v^{\rho,l,1}_i$ which solves the equation in (\ref{scalparasystlin10v}). This way we can avoid a more cumbersome analysis which involves the more complicated equations for the controlled value functions stated in the introduction (the analysis is analogous but there are a lot more terms with factor $\rho_l$ which have to be treated then). Here we take advantage of the fact that we choose a control function once at each time step $l\geq 1$, and solve the for the increment of the controlled value function independently of the increment of the control function (but not independently of the control function data $r^{l-1}_i(l-1,.)$ at time step $l\geq 1$. If $G^l_H$ denotes the fundamental solution of the equation
\begin{equation}
\frac{\partial G^l_H}{\partial \tau}-\rho_l\frac{1}{2}\sum_{j=0}^mV_{j}^2G^l_H=0
\end{equation}
on the domain $[l-1,l]\times {\mathbb R}^n$, then
 \begin{equation}\label{scalparasystlin10vher}
\begin{array}{ll}
v^{\rho,l,1}_i(\tau,x)=\int_{{\mathbb R}^n}v^{r,\rho,l-1}_i(l-1,y)G^l_H(\tau,x;s,y)dy\\
\\ 
-\rho_l\int_{l-1}^{\tau}\int_{{\mathbb R}^n}V_{B}\left[v^{r,\rho,l-1}(l-1,.)\right] v^{r,\rho,l-1}_i(l-1,y)G^l_H(\tau,x;s,y)dyds+\\
\\
\rho_l\int_{l-1}^{\tau}\int_{{\mathbb R}^n}\int_{{\mathbb R}^n}\sum_{j,m=1}^n \left(c_{jm} \frac{\partial v^{r,\rho,l-1}_m}{\partial x_j}(l-1,.)\frac{\partial v^{r,\rho,l-1}_j}{\partial x_m}(l-1,.)\right) (s,y)\times\\
\\
\times\frac{\partial}{\partial x_i}K^{\mbox{ell}}_n(z-y)G^l_H(\tau,x;s,z)dydzds,
\end{array}
\end{equation} 
and
\begin{equation}\label{deltaurhok0her}
\begin{array}{ll}
\delta v^{\rho,l,k+1}_i(\tau,x)=\\
\\ 
-\rho_l\int_{l-1}^{\tau}\int_{{\mathbb R}^n}\left( V_{B}\left[v^{\rho,l,k}\right] \delta v^{\rho,l,k}_i+V_{B}\left[\delta v^{\rho,l,k}\right] v^{\rho,l,k}_i\right) (s,y)\times\\
\\
\times G^l_H(\tau,x;s,y)dyds+\rho_l\int_{l-1}^{\tau}\int_{{\mathbb R}^n}\int_{{\mathbb R}^n}K^{\mbox{ell}}_{n,i}(z-y)\times\\
\\
{\Big (} c_{jm}\left( \sum_{j,m=1}^n\left( v^{\rho,l,k}_{m,j}+v^{\rho,l,k-1}_{m,j}\right)(s,y) \right)  \delta v^{\rho,l,k}_{j,m}(s,y) {\Big)}\times\\
\\
\times G^l_H(\tau,x;s,z)dydzds.
\end{array}
\end{equation}
The representation in (\ref{scalparasystlin10vher}) and the a priori estimates for $G_H$ show  that we may loose some order of polynomial decay at each time step for the uncontrolled scheme due to the first term in (\ref{scalparasystlin10vher}). On the other hand, the representation in (\ref{deltaurhok0her}) involves products of (spatial derivatives of) value functions with (spatial derivatives) of functional increments which both have a  polynomial decay of a certain order. Hence products have a higher order of polynomial decay which can compensate the polynomial growth factors of the densities we observe in the standard estimates. This is one motivation for the introduction of a control function $r^l_i=r^{l-1}_i(l-1,.)+\delta r^l_i$ along with $\delta r^l_i=-\delta v^{\rho,l,1}_i$ (our most simple choice of a control function), where we have
\begin{equation}
\begin{array}{ll}
v^{r,\rho,l}_i=v^{r,\rho,l-1}_i(l-1,.)+\delta v^{\rho,l,1}_i+\delta r^l_i+\sum_{k\geq 2}\delta v^{\rho,l,k}_i\\
\\
=v^{r,\rho,l}_i=v^{r,\rho,l-1}_i(l-1,.)+\sum_{k\geq 2}\delta v^{\rho,l,k}_i.
\end{array}
\end{equation}
Well the representation in (\ref{scalparasystlin10vher}) shows that inheritance polynomial decay is also preserved if we choose the simplified control function of iiia), i.e. the function
\begin{equation}\label{possiii}
\begin{array}{ll}
 \delta r^l_i(\tau,x)=-\int_{{\mathbb R}^n}v^{r,\rho,l-1}_i(l-1,y)G_l(\tau,x;l-1,y)dy\\
 \\
 + v^{r,\rho,l-1}_i(\tau,x).
 \end{array}
 \end{equation}
Anyway, for such types of controlled schemes we have preservation of polynomial decay of the controlled scheme if we have preservation of polynomial decay for the higher order correction terms. 
In this context (cf. \cite{KB3}) we say that
\begin{equation}
\begin{array}{ll}
v^{\rho,l,k}_i \mbox{ is of polynomial decay of order $m\geq 2$}\\
\\
\mbox{ for derivatives up to order $p\geq 0$ if for some finite  $C>0$}\\
\\
\sum_{|\alpha|\leq p}\sup_{\tau\in [l-1,l]}|D^{\alpha}_x v^{\rho,l,k}_i(\tau,y)|\leq \frac{C}{1+|y|^m}.
\end{array}
\end{equation}
Similarly for the functional increments $\delta v^{\rho,l,k}_i$.
The spaces of functions of polynomial decay of order $m\geq 2$ form an algebra. Especially, if $v^{\rho,l,k-1}_i,v^{\rho,l,k}_i$ are of polynomial decay of order $m\geq 2$ for derivatives up to order $p\geq 0$, then we have that functional increments $\delta v^{\rho,l,k}_i$ are of polynomial decay of order $m\geq 2$ and for derivatives up to order $p\geq 0$ . Moreover products of such functions have polynomial decay of order $2m$ for derivatives up to order $p$.
These considerations motivate the following definition (which we take from \cite{KB3} essentially).
\begin{defi}
Assume that for all $1\leq i\leq n$ and $l-1\geq 0$ the functions $v^{\rho,l-1,1}_i$ have polynomial decay of some order $m$ (which is a positive integer) for derivatives up to order $p$. We say that polynomial decay of order $m$ for derivatives up to order $p\geq 0$ is inherited by a controlled scheme (of type iii) or iiia) as described above) for the higher order correction terms $\delta v^{\rho,l,k}_i,~k\geq 2$, if for all $1\leq i\leq n$ these higher order terms have polynomial decay of order $m$ for derivatives up to order $p$.
\end{defi}
Next we prove inheritance of polynomial decay for the generalized controlled scheme.
Since we are interested in polynomial decay with respect to the spatial variables we use the standard a priori estimate of the density in (\ref{pxest}) for $j=0$ and $\beta=0$ and $\alpha\geq 0$, i.e., we use the estimate
\begin{equation}\label{pxest*}
\begin{array}{ll}
{\Bigg |}\frac{\partial^{|\alpha|}}{\partial x^{\alpha}} p(\tau,x,y){\Bigg |}\leq \frac{A_{0,\alpha,0}(t)(1+x)^{m_{0,\alpha,0}}}{t^{n_{0,\alpha,0}}}\exp\left(-B_{0,\alpha,0}(\tau)\frac{(x-y)^2}{\tau}\right).
\end{array}
\end{equation}
However, there is an additional difficulty here for the generalized scheme compared to the simple scheme with constant viscosity (and even compared to a scheme with operators with strictly elliptic spatial part). This additional difficulty consists in the polynomial growth factor 
\begin{equation}
(1+x)^{m_{0,\alpha,0}}
\end{equation}
in (\ref{pxest*}) which does not appear in the a priori estimates for operators with strictly elliptic spatial part.
 We have
\begin{lem}
Polynomial decay of order $q$ with
 \begin{equation}
q\geq \max_{|\alpha|\leq p}\left\lbrace n_{0,\alpha,0},m_{0,\alpha,0}\right\rbrace +n+1
\end{equation}
for derivatives up to order $p\geq 0$ is inherited by the higher order correction terms.
\end{lem}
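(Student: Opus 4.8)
The plan is to argue by induction on the subiteration index $k$, at a fixed time step $l\geq 1$, that $v^{\rho,l,k}_i$ and the functional increments $\delta v^{\rho,l,k}_i$ all have polynomial decay of order $q$ for derivatives up to order $p$, with a decay constant that may depend on $k$ and $l$. The base case $k=1$ is immediate: $v^{\rho,l,1}_i$ has polynomial decay of order $q$ by assumption (this being part of the definition of inheritance), the data $v^{r,\rho,l-1}_i(l-1,\cdot)$ has polynomial decay of order $q$ by the standing assumption on the data (cf. (\ref{contractionuseinf})), and hence $\delta v^{\rho,l,1}_i=v^{\rho,l,1}_i-v^{r,\rho,l-1}_i(l-1,\cdot)$ has it too, since functions of a fixed polynomial decay order form a vector space. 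For the inductive step I assume that $v^{\rho,l,k-1}_i$, $v^{\rho,l,k}_i$ and $\delta v^{\rho,l,k}_i$ have polynomial decay of order $q$ for derivatives up to order $p$ --- for $v^{\rho,l,k}_i=v^{\rho,l,1}_i+\sum_{j=2}^{k}\delta v^{\rho,l,j}_i$ this follows as a finite sum --- and read off $\delta v^{\rho,l,k+1}_i$ from the integral representation (\ref{deltaurhok0her}).

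Inspecting (\ref{deltaurhok0her}), the source of $\delta v^{\rho,l,k+1}_i$ is built from (a) the Burgers type terms $V_B[v^{\rho,l,k}]\delta v^{\rho,l,k}_i$ and $V_B[\delta v^{\rho,l,k}]v^{\rho,l,k}_i$, convolved once with the local H\"{o}rmander density $G^l_H$, and (b) a Leray type term in which a product $\left(\sum_{j,m}c_{jm}(v^{\rho,l,k}_{m,j}+v^{\rho,l,k-1}_{m,j})\right)\delta v^{\rho,l,k}_{j,m}$ is convolved first with $K^{\mbox{ell}}_{n,i}$ and then with $G^l_H$. By the algebra property of polynomial decay spaces --- products of functions of decay order $q$ have decay order $2q$ --- together with the boundedness of the $B_j$ and the bounds (\ref{CB}) on the $c_{jm}$, these source products have polynomial decay of order $2q$ (for derivatives of one order less, a loss absorbed into the margin in $q$). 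It then suffices to show that (i) convolution against $G^l_H$ sends polynomial decay of order $2q$ to polynomial decay of order at least $q$, and (ii) the inner convolution against the weakly singular kernel $K^{\mbox{ell}}_{n,i}\in O(|x-y|^{1-n})$ of Definition \ref{defiell}, whose weighted $L^1$ constant $C_K$ of (\ref{CK}) is finite, does not destroy this. For (i) one uses the a priori estimate (\ref{pxest*}): for $|x|$ large, split the $y$-integral at $|y|=|x|/2$; on $\{|y|\geq |x|/2\}$ the source's decay contributes $(1+|x|)^{-2q}$ while the Gaussian integrates in $y$ to a power of $\tau-s$, and on $\{|y|<|x|/2\}$ the Gaussian factor $\exp(-B(x-y)^2/(\tau-s))$ beats any polynomial uniformly over the step interval $\tau-s\in(0,1]$; the only surviving loss is the polynomial growth factor $(1+x)^{m_{0,\alpha,0}}$ of the density estimate.

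The main obstacle --- and the reason the bound reads $q\geq\max_{|\alpha|\leq p}\{n_{0,\alpha,0},m_{0,\alpha,0}\}+n+1$ and not something smaller --- is the interplay of this growth factor $(1+x)^{m_{0,\alpha,0}}$ with the time singularity $(\tau-s)^{-n_{0,\alpha,0}}$ of $D^{\alpha}_x G^l_H$: keeping all $|\alpha|$ spatial derivatives on the density makes the $s$-integral diverge for $|\alpha|$ large, so one distributes the derivatives, leaving at most one on $G^l_H$ and transferring the rest, by integration by parts, onto the polynomially decaying source (equivalently onto the adjoint, i.e. the $y$-variable, of the density, whose Kusuoka-Stroock type bound carries a factor $(1+y)^{m}$ which the order $2q$ decay of the source absorbs as soon as $2q>m+n$). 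The net loss from the density convolution is then at most $m_{0,\alpha,0}$ orders, so $\delta v^{\rho,l,k+1}_i$ has polynomial decay of order $2q-m_{0,\alpha,0}\geq q+n+1\geq q$; the extra $+n+1$ is spent on the convergence of the tail $y$-integrals and on the finiteness of $C_K$, which needs decay strictly above $n$. Together with the analogous estimate for the Burgers type terms this closes the inductive step, and since $k$ was arbitrary every higher order correction term $\delta v^{\rho,l,k}_i$, $k\geq 2$, has polynomial decay of order $q$ for derivatives up to order $p$.
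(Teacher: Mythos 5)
Your proposal follows essentially the same route as the paper: the integral representation (\ref{deltaurhok0her}), the algebra property giving decay of order $2q$ for the source products, the Kusuoka--Stroock bound (\ref{pxest*}) with its polynomial growth factor $(1+x)^{m_{0,\alpha,0}}$, and the final arithmetic $2q-m_{0,\alpha,0}-1-n\geq q$. The only deviation is a technical sub-step: where you treat the near-diagonal/time singularity for higher derivatives by shifting derivatives onto the polynomially decaying source (in the spirit of the local adjoints the paper uses elsewhere), the paper's own proof keeps $D^{\alpha}_x$ on $G^l_H$ and instead splits the integral into a small ball $B_{\epsilon}(x)$, where an integrable weakly singular bound is invoked, and its complement, where the Kusuoka--Stroock estimate applies; both devices are consistent with the paper's framework and yield the same conclusion.
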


\begin{proof}
Consider the representation of the higher order correction term $\delta v^{\rho,l,k+1}_i$
in (\ref{deltaurhok0her}). Since $G^l_H$ is a density for $\alpha=0$ we know that the representation
\begin{equation}\label{deltaurhok0her1}
\begin{array}{ll}
D^{\alpha}_x\delta v^{\rho,l,k+1}_i(\tau,x)=\\
\\ 
-\rho_l\int_{l-1}^{\tau}\int_{{\mathbb R}^n}\left( V_{B}\left[v^{\rho,l,k}\right] \delta v^{\rho,l,k}_i+V_{B}\left[\delta v^{\rho,l,k}\right] v^{\rho,l,k}_i\right) (s,y)\times\\
\\
\times D^{\alpha}_xG_H(\tau,x;s,y)dyds+\rho_l\int_{l-1}^{\tau}\int_{{\mathbb R}^n}\int_{{\mathbb R}^n}K^{\mbox{ell}}_{n,i}(z-y)\times\\
\\
{\Big (} c_{jm}\left( \sum_{j,m=1}^n\left( v^{\rho,l,k}_{m,j}+v^{\rho,l,k-1}_{m,j}\right)(s,y) \right)  \delta v^{\rho,l,k}_{j,m}(s,y) {\Big)}\times\\
\\
\times D^{\alpha}_xG_H(\tau,x;s,z)dydzds.
\end{array}
\end{equation}
holds. For $|\alpha|>0$ the representation can be justified by the fact that for each $x\in {\mathbb R}^n$ there exists $\epsilon >0$ and a ball $B_{\epsilon}(x)$ of radius $\epsilon$ around $x$ such that we get an integrable weakly singular upper bound. We then get an upper bound for ${\big |}D^{\alpha}_x\delta v^{\rho,l,k+1}_i(\tau,x){\big |}$ by the upper bounds of the modulus of the two summands
\begin{equation}\label{deltaurhok0her2}
\begin{array}{ll}
D^{\alpha}_x\delta v^{\rho,l,k+1}_i(\tau,x)=\\
\\ 
-\rho_l\int_{l-1}^{\tau}\int_{{\mathbb R}^n\setminus B_{\epsilon}(x)}\left( V_{B}\left[v^{\rho,l,k}\right] \delta v^{\rho,l,k}_i+V_{B}\left[\delta v^{\rho,l,k}\right] v^{\rho,l,k}_i\right) (s,y)\times\\
\\
\times D^{\alpha}_xG_H(\tau,x;s,y)dyds+\rho_l\int_{l-1}^{\tau}\int_{{\mathbb R}^n\setminus B_{\epsilon}(x)}\int_{{\mathbb R}^n}K^{\mbox{ell}}_{n,i}(z-y)\times\\
\\
{\Big (} c_{jm}\left( \sum_{j,m=1}^n\left( v^{\rho,l,k}_{m,j}+v^{\rho,l,k-1}_{m,j}\right)(\tau,y) \right)  \delta v^{\rho,l,k}_{j,m}(s,y) {\Big)}\times\\
\\
\times D^{\alpha}_xG_H(\tau,x;s,z)dydzds,
\end{array}
\end{equation}
and
\begin{equation}\label{deltaurhok0her3}
\begin{array}{ll}
D^{\alpha}_x\delta v^{\rho,l,k+1}_i(\tau,x)=\\
\\ 
-\rho_l\int_{l-1}^{\tau}\int_{B_{\epsilon}(x)}\left( V_{B}\left[v^{\rho,l,k}\right] \delta v^{\rho,l,k}_i+V_{B}\left[\delta v^{\rho,l,k}\right] v^{\rho,l,k}_i\right) (s,y)\times\\
\\
\times D^{\alpha}_xG_H(\tau,x;s,y)dyds+\rho_l\int_{l-1}^{\tau}\int_{B_{\epsilon}(x)}\int_{{\mathbb R}^n}K^{\mbox{ell}}_{n,i}(z-y)\times\\
\\
{\Big (} c_{jm}\left( \sum_{j,m=1}^n\left( v^{\rho,l,k}_{m,j}+v^{\rho,l,k-1}_{m,j}\right)(s,y) \right)  \delta v^{\rho,l,k}_{j,m}(s,y) {\Big)}\times\\
\\
\times D^{\alpha}_xG_H(\tau,x;s,z)dydzds.
\end{array}
\end{equation}
An upper bound for the first term is
\begin{equation}\label{deltaurhok0her4}
\begin{array}{ll}
{\big |}D^{\alpha}_x\delta v^{\rho,l,k+1}_i(\tau,x){\big |}=\\
\\ 
\rho_l\int_{l-1}^{\tau}\int_{{\mathbb R}^n\setminus B_{\epsilon}(x)}{\big |}\left( V_{B}\left[v^{\rho,l,k}\right] \delta v^{\rho,l,k}_i+V_{B}\left[\delta v^{\rho,l,k}\right] v^{\rho,l,k}_i\right) (s,y){\big |}\times\\
\\
\times {\Big |}\frac{A_{0,\alpha,0}(t)(1+x)^{m_{0,\alpha,0}}}{t^{n_{0,\alpha,0}}}\exp\left(-B_{0,\alpha,0}(\tau)\frac{(x-y)^2}{\tau}\right){\Big |}dyds\\
\\
+\rho_l\int_{l-1}^{\tau}\int_{{\mathbb R}^n\setminus B_{\epsilon}(x)}\int_{{\mathbb R}^n}{\big |}K^{\mbox{ell}}_{n,i}(z-y){\big |}\times\\
\\
\times {\Big |} c_{jm}\left( \sum_{j,m=1}^n\left( v^{\rho,l,k}_{m,j}+v^{\rho,l,k-1}_{m,j}\right)(s,y) \right)  \delta v^{\rho,l,k}_{j,m}(s,y) {\Big|}\times\\
\\
\times {\Big |}\frac{A_{0,\alpha,0}(t)(1+x)^{m_{0,\alpha,0}}}{t^{n_{0,\alpha,0}}}\exp\left(-B_{0,\alpha,0}(\tau)\frac{(x-z)^2}{\tau}\right){\Big |}dydzds,
\end{array}
\end{equation}
The products of type $\left( V_{B}\left[v^{\rho,l,k}\right] \delta v^{\rho,l,k}_i+V_{B}\left[\delta v^{\rho,l,k}\right] v^{\rho,l,k}_i\right) (s,y)$ have polynomial decay of order $2q$, and this implies that we have polynomial decay of order $q$ for the (double) convolutions involved. Here we use the 'ellipticity' assumption concerning the kernel $K^{\mbox{ell}}$ (which implies that we loose at most one order of polynomial decay). Our assumption that implies that via a very rough estimate we still have polynomial decay of order $2q-m_{0,\alpha,0}-1-n\geq q$ for the term in (\ref{deltaurhok0her4}). For the local integrals in (\ref{deltaurhok0her3}) we get the same conclusion.
\end{proof}

\section{Global regularity and growth behavior of the control function}
In this section we give several arguments for at most linear growth of several type of control functions where we argue with respect to function spaces and with respect to preservation of certain order of polynomial decay by the scheme. We also provide arguments for linear upper bounds without the local adjoint of H\"{o}rmander densities. 
Since we can prove local contraction results with respect to norms
\begin{equation}\label{normsas}
|.|_{C^0\left((l-1,l)\times H^m\left( {\mathbb R}^n \right)\right)} ,~|.|_{C^1\left((l-1,l)\times H^m\left( {\mathbb R}^n \right) \right) }
\end{equation}
for the controlled velocity functions we can measure the growth of these contolled velocity functions with respect to these norms. More precisely, if we have an inductive upper bound for $D^{\alpha}_xv^{r,*,\rho,l-1}_i(l-1,.)$ for multivariate spatial derivatives of order $|\alpha|\geq 0$ of the form, let's say
\begin{equation}
|D^{\alpha}_xv^{r,*,\rho,l-1}_i(l-1,.)|_{C^0\left((l-1,l)\times H^m\left( {\mathbb R}^n \right)\right)  }\leq C,~\mbox{for}~0\leq |\alpha|\leq m, 
\end{equation}
then a small step size $\rho_l>0$ and a choice of the control function increment
as in (\ref{controlincrementobservation}) below ensure that this upper bound for the controlled velocity function is preserved form time step $l-1$ to time step $l$. A global scheme with such an upper bound can be established for all time steps if we can prove a linear upper bound for the control function $r^l_i$ (linear with respect to the time step size). Then we may choose a step size $\rho_l\sim \frac{1}{l}$ and offset the growth of the control functions. Linear growth of the control function means constant growth (at most) at each time step.
Consider first a simple control function with the increment
\begin{equation}\label{controlincrementobservation}
\delta r^l_i(l,x)=\int_{l-1}^l\int_{{\mathbb R}^ n}\frac{-v^{r,*,\rho,l-1}_i(l-1,y)}{C}G_l(l,x;s,y)dyds.
\end{equation}
We can ensure (at most) constant growth with respect to norms as in (\ref{normsas}) by the estimation of convolutions with the Gaussian by the generalized Young inequality
\begin{equation}
|f\ast g|_r\leq |f|_p|g|_q,~\frac{1}{p}+\frac{1}{q}=1+\frac{1}{r},~1\leq p,q,r\leq 1.
\end{equation}
This is clear for the spatial part of the convolution, and for the time part we may use similar observations as in the proof of the local contraction result below. Similarly for spatial derivatives, where we may use the arguments which we considered in the proof of the local contraction results (including shifting derivatives). 

Can this be generalized to densites which occur in the solution scheme of highly degenerate Navier Stokes equations? For the control function increments themselves this holds since the density $G^l_H$ of a diffusion which satisfies the H\"{o}rmander condition is in $L^1$ for positive time (-difference), and the weakly singular behavior at time near zero changes only for derivatives. Similarly, natural derivative estimates of the density $G^l_H$ have an additional spatial factor of polynomial type, such that we may loose integrability properties. We have to avoid at least higher order (order $\geq 2$) derivatives on the local and any order of derivatives  (order $\geq 1$) on the global level. Well, this can be done using local adjoints in order to shift derivatives. The spatial regularity of the controlled velocity function from the previous time step depends on the dimension. This regularity can be used in order to get upper bounds for the control function increments. We recall this phenomenon first in the context of the classical Navier Stokes equation problem with constant positive viscosity. However, we want to add another twist to the discussion which concerns polynomial decay. We discuss it in the context of the classical Navier Stokes equation model first. The generalisation of the discussion will lead us to a more in-depth discussion of some properties of densities which satisfy the H\"{o}rmander condition. Although the natural upper bounds of these densities show that we may {\it not} use derivatives of these densities in order to estimate the growth behavior of the control functions, we shall see that it is sufficient to have some estimate of the local density itself in order to establish certain upper bounds of the growth control of the control functions. But let is turn to the classical model first and recall some thoughts of \cite{KB3}.

A natural question which may be posed is: why does the definition in (\ref{controlincrementobservation}) not reduce the polynomial decay of the control functions, and therefore, with a delay of one time step the polynomial decay of the controlled velocity functions as well ? The reason is that the regularity of the integrand can be used here. We cite and extend here the latest discussion in \cite{KB3}, and then show that these considerations can be extended to the highly degenerated systems. If the controlled velocity function $v^{r,*,\rho,l-1}_i(l-1,.)$ is smooth, i.e., if $v^{r,*,\rho,l-1}_i(l-1,.)\in C^{\infty}$, then this holds for the increment (\ref{controlincrementobservation}) as well. In the classical model where we have constant viscosity this follows easily from the fact that 
(\ref{controlincrementobservation}) is a convolution. For an arbitrary multivariate derivative $D^{\alpha}_x$ of order $|\alpha|\geq 0$ we may use the smoothness of the controlled velocity function and write
\begin{equation}\label{convobservation}
\begin{array}{ll}
D^{\alpha}_x\delta r^l_i(l,x)=\int_{l-1}^l\int_{{\mathbb R}^ n}\frac{-v^{r,*,\rho,l-1}_i(l-1,y)}{C}D^{\alpha}_xG_l(l,x;s,y)dyds\\
\\
=\int_{l-1}^l\int_{{\mathbb R}^ n}D^{\alpha}_y\frac{-v^{r,*,\rho,l-1}_i(l-1,y)}{C}G_l(l,x;s,y)dyds,
\end{array}
\end{equation}
which shows that the controlled function increment $\delta r^l_i(l,.)$ is smooth if the controlled velocity function $v^{r,*,\rho,l}_i(l-1,.)$ of the previous time step $l-1$ is smooth. Moreover, if the control function $r^{l-1}_i(l-1,.)$ is smooth it follows that the control function $r^l_i(l,.)$, the controlled velocity function $v^{r,*,\rho,l}_i$ and the original velocity function $v^{*,\rho,l}_i(l,.)$ inherit smoothness from the previous time step. This phenomenon has some consequences for the growth behavior of the simple control function considered so far. We have already observed that convolutions of the type considered in (\ref{convobservation}) may be estimated by breaking up the integral into a local part around $x$ and a complementary part. For a ball $B_{\epsilon}(x)$ of radius $\epsilon>0$ around $x$  we may write for $0<\mu<1$
\begin{equation}\label{convobservation2}
\begin{array}{ll}
{\Big |}D^{\alpha}_x\delta r^l_i(l,x){\Big |}\leq\\
\\
{\Big |}\int_{l-1}^l\int_{B_{\epsilon}(x)}D^{\alpha}_y\frac{-v^{r,*,\rho,l-1}_i(l-1,y)}{C}\frac{C}{(l-s)^{\mu}|x-y|^{n-2\mu}}dyds{\Big |}\\
\\
+{\Big |}\int_{l-1}^l\int_{{\mathbb R}^ n\setminus B_{\epsilon}(x)}D^{\alpha}_y\frac{-v^{r,*,\rho,l-1}_i(l-1,y)}{C}G_l(l,x;s,y)dyds{\Big |}.
\end{array}
\end{equation}
The second integral on the right side of (\ref{convobservation2}) is over a domain where $G_l$ is analytic. Moreover, polynomial decay any order $p>0$ of this term is inherited from $D^{\alpha}_y\frac{-v^{r,*,\rho,l-1}_i(l-1,.)}{C}$ as we may use the convolution rule to write the second integral on the right side (\ref{convobservation2}) for $|x|\neq 0$ in the form
\begin{equation}\label{convobservation2}
\begin{array}{ll}
{\Big |}\int_{l-1}^l\int_{{\mathbb R}^ n\setminus B_{\epsilon}(x)}D^{\alpha}_y\frac{-v^{r,*,\rho,l-1}_i(l-1,x-y)}{C}G_l(l,y;s,0)dyds{\Big |}\\
\\
\leq {\Big |} \int_{l-1}^l\int_{{\mathbb R}^ n\setminus B_{\epsilon}(x)}\frac{c}{1+|x-y|^p}G_l(l,y;s,0)dyds{\Big |}\\
\\
\leq {\Big |} \int_{l-1}^l\int_{{\mathbb R}^ n\setminus B_{\epsilon}(x)}\frac{c}{1+|x-y|^p}\frac{\tilde{c}}{1+|y|^n}dyds{\Big |}\leq\frac{\tilde{C}}{|x|^p}
\end{array}
\end{equation}
The first integral on the right side of (\ref{convobservation2}) may be estimated in polar coordinates $(r,\phi_1,\cdots,\phi_n)$ using iterated partial integration. For $\mu\in \left(\frac{1}{2},1\right)$ we need only $n-1$ partial integrations to get the upper bound   
\begin{equation}
\begin{array}{ll}
{\Big |}\int_{l-1}^l\int_{B_{\epsilon}(x)}D^{\alpha}_y\frac{-v^{r,*,\rho,l-1}_i(l-1,x-y)}{C}\frac{C}{(l-s)^{\mu}|y|^{n-2\mu}}dyds{\Big |}\\
\\
\leq (n-1)C_{\epsilon}+\\
\\
{\Big |}\int_{l-1}^l\int_{B_{\epsilon}(x)}D^{n-1}_r\left( D^{\alpha}_y\frac{-v^{r,*,\rho,l-1}_i(l-1,x-y)}{C}\right)_{\mbox{pol}} \frac{C}{(l-s)^{\mu}}|y|^{2\mu-1}dyds{\Big |},
\end{array}
\end{equation}
where $C_{\epsilon}$ is an upper bound for boundary terms which are integrals over the sphere $S^n_{\epsilon}(x)$, i.e., the boundary of $B_{\epsilon}(x)$. Here we understand that
\begin{equation}
\left( D^{\alpha}_y\frac{-v^{r,*,\rho,l-1}_i(l-1,x-y)}{C}\right)_{\mbox{pol}}
\end{equation}
is the $D^{\alpha}_y\frac{-v^{r,*,\rho,l-1}_i(l-1,.)}{C}$ written in polar coordinates. Summing up these conservations we get for $\mu>\frac{1}{2}$ an upper bound $\sim 1$ for the control function increment as desired. Note that the surface terms mentioned become small for small $\epsilon>0$ for this choice of $\mu>\frac{1}{2}$.

Next concerning extended control functions we come to a similar conclusion for the additional term
\begin{equation}
-\delta v^{r^{l-1},*,\rho,l,1}_i(\tau,x),
\end{equation}
for reasons discussed  in the previous section, and the additional source term can be estimated as above. Let us consider the simplified control function
\begin{equation}\label{controlincrementobservation22}
\delta r^{l,\mbox{simple}}_i(l,x)=\int_{l-1}^l\int_{{\mathbb R}^ n}\frac{-v^{r,\rho,l-1}_i(l-1,y)}{C}G^l_H(l,x;s,y)dyds.
\end{equation}
We have already remarked that we may replace $G^l_H$ by $G_l$, because which type of diffusion does not matter for the control function increment. The only reason to involve a density in the definition of the simplified control (\ref{controlincrementobservation22}) is the smoothing effect. So for this part we can argue as above. However, we could also use the local adjoint and argue as in \cite{KB2}. We explain the concept of a local adjoint for the first term on the right side of an extended control function increment in
\begin{equation}\label{controlincrementobservation22}
\delta r^{l,\mbox{full}}_i(l,x)=-\delta v^{r,\rho,l,1}_i(l,x)+\int_{l-1}^l\int_{{\mathbb R}^ n}\frac{-v^{r,\rho,l-1}_i(l-1,y)}{C}G^l_H(l,x;s,y)dyds.
\end{equation}
This term is essentially of the form
\begin{equation}\label{deltav1111}
\int_{{\mathbb R}^n}v^{r,\rho,l-1}_i(l-1,y)G^l_H(l,x;l-1,y)dy.
\end{equation}
Spatial derivatives have the ad hoc representation
\begin{equation}\label{deltav2222}
\int_{{\mathbb R}^n}v^{r,\rho,l-1}_i(l-1,y)D^{\alpha}_xG^l_H(l,x;l-1,y)dy.
\end{equation} 
For the latter expression (\ref{deltav2222}) the local adjoint may be useful in order to shift derivatives using the regularity of the 'data' $v^{r,\rho,l-1}_i(l-1,.)$. 
Let us consider the former expression (\ref{deltav1111}). 
For $j=0,\beta=0$ from the main estimate expressed in time-homogeneous form in $\ref{pxest}$ we get with $t^{n_{0,\alpha,0}}=(l-(l-1))^{n_{0,\alpha,0}}=1$
\begin{equation}\label{pxest00}
\begin{array}{ll}
{\Bigg |}\frac{\partial^{|\alpha|}}{\partial x^{\alpha}} G^l_H(l,x;l-1,y){\Bigg |}
\leq A^{sup,l}_{0,\alpha,0}(1+x)^{m_{0,\alpha,0}}\exp\left(-B^{inf,l}_{0,\alpha,0}(x-y)^2\right),
\end{array}
\end{equation}
where $B^{inf,l}_{0,\alpha,0}$ is a positive lower bound of a function of time corresponding to the function $B_{j,\alpha,\beta}$ for $j=0$ and $\beta=0$ in (\ref{pxest}), and, similarly, $A^{sup,l}_{0,\alpha,0}$ is an upper bound of a function of time corresponding to function $A_{j,\alpha,\beta}$ for $j=0$ and $\beta=0$ in (\ref{pxest}). Note that both constants are determined on a different time scale in our scheme than in the standard theorem above, but this gives only another constant factor related to the time step size $\rho_l$ at time step $l\geq 1$. Assuming that we have polynomial decay of order $m\geq 2$ from the previous time step $l-1$ we then have for any $p>1$ and some generic constant $C>0$
\begin{equation}\label{deltav22223}
\begin{array}{ll}
{\big |}\int_{{\mathbb R}^n}v^{r,\rho,l-1}_i(l-1,y)D^{\alpha}_xG^l_H(l,x;l-1,y)dy{\big |}\\
\\
\leq \int_{{\mathbb R}^n}\frac{C}{1+|y|^m}A^{sup,l}_{0,\alpha,0}(1+x)^{m_{0,\alpha,0}}\exp\left(-B^{inf,l}_{0,\alpha,0}(x-y)^2\right)\\
\\
\leq \int_{{\mathbb R}^n}\frac{C}{1+|y|^m}A^{sup,l}_{0,\alpha,0}(1+x)^{m_{0,\alpha,0}}\frac{C}{1+|x-y|^p}\\
\\
\leq \frac{\tilde{C}}{1+|x|^{m+p-n-m_{0,\alpha,0}}}.
\end{array}
\end{equation}
Hence for the choice $p\geq n+m_{0,\alpha,0}$ polynomial decay 
of the the first order approximation increments $-\delta v^{r,\rho,l,1}_i,~1\leq i\leq n$ is ensured, and therefore inheritance of polynomial decay holds for this part of the control function increments too. The argument which leads to (\ref{deltav22223}) hinges on the positive time difference $l-(l-1)$ of course. For the second term in (\ref{controlincrementobservation22}) we have to deal with the singularity as time difference become small (and zero in the limit). In this case we have already remarked that we can replace the second term in (\ref{controlincrementobservation22*}) by
\begin{equation}\label{controlincrementobservation22}
\int_{l-1}^l\int_{{\mathbb R}^ n}\frac{-v^{r,\rho,l-1}_i(l-1,y)}{C}G_l(l,x;s,y)dyds,
\end{equation}
and work as in the classical case. We could even keep the kernel $G^H_l$ in (\ref{controlincrementobservation22}) if we apply local adjoints with the construction in \cite{KH} and then argue as in in the prove of the local contraction result in \cite{KB2}. 
\section{Global linear upper bound of the Leray projection term for simple controlled schemes}
In this section we consider the simple possibility ii) in the list of control functions of the introduction. We show how this choice leads to a global linear bound of the Leray projection term. The alternative simple method via the choice ia) of that list is considered in the next section, where it serves as a step for a uniform bound. 
Concerning the global linear upper bound (on a time-transformed time scale) we first reconsider the reasoning outlined in the introduction for the classical Navier Stokes equation with constant viscosity. In a second step we shall show how and with which nuances this applies to the generalized system. Assume inductively (with respect to the time step number $l\geq 1$) that we have realized an upper bound proportional to the squareroot of the time step number for some time step number $l-1\geq 0$, i.e., that we have 
\begin{equation}
D^{\alpha}_xv^{r,\rho,l-1}_i(l-1,.)\sim \sqrt{l-1} \mbox{ for }|\alpha|\leq m
\end{equation}
for some $m\geq 2$ which is fixed in advance. We  may refine the local contraction result
\begin{equation}\label{loccontrupp1}
{\big |}\delta v^{r,\rho,l,k}_i{\big |}_{C^0\left((l-1,l),H^m\right)}\leq \frac{1}{2}{\big |}\delta v^{r,\rho,l,k-1}_i{\big |}_{C^0\left((l-1,l),H^m\right) },
\end{equation}
(for all $1\leq i\leq n$) or higher order local contraction result
\begin{equation}\label{loccontrupp2}
{\big |}\delta v^{r,\rho,l,k}_i{\big |}_{C^m\left((l-1,l),H^m\right) }\leq \frac{1}{2}{\big |}\delta v^{r,\rho,l,k-1}_i{\big |}_{C^m\left((l-1,l),H^m\right) }
\end{equation}
(for all $1\leq i\leq n$) a bit. In the form (\ref{loccontrupp1}) or (\ref{loccontrupp2}) it just ensures that the local limit 
\begin{equation}\label{funciv2}
\mathbf{v}^{\rho ,l }=\mathbf{v}^{r,\rho,l-1}+\sum_{k=1}^{\infty} \delta \mathbf{v}^{\rho,l,k}
=\mathbf{v}^{\rho,l,1}+\sum_{k=2}^{\infty} \delta \mathbf{v}^{\rho,l,k}
\end{equation} 
of the corresponding local functional series represents a local solution of the incompressible Navier Stokes equation on the domain $[l-1,l]\times {\mathbb R}^n$ (if the time step size $\rho_l$ is small enough).
\begin{rem}
Note that on the left side of (\ref{funciv2}) we use the notation $\mathbf{v}^{\rho ,l }$ such that $\mathbf{v}^{r,\rho ,l }=\mathbf{v}^{\rho ,l }+\delta \mathbf{r}^l$ accepting some notational ambiguity for the sake of notational simplicity according to our remarks above.
\end{rem}

We discussed this in \cite{KB2}, \cite{KB3}, and the argument transfers to the general scheme.
Now consider the first increment $\delta v^{r,\rho,l,1}_i$ for $1\leq i\leq n$. For the classical Navier Stokes equation with constant viscosity the functions
$v^{\rho,1,l}_i$ solve the equation in (\ref{scalparasystlin10v}) where the solution has the classical representation
 \begin{equation}\label{scalparasystlin10v*}
 \begin{array}{ll}
v^{\rho,1,l}_i(\tau,x)=\int_{{\mathbb R}^n}v^{r,\rho,l-1}_i(l-1,y)G_l(\tau,x-y)dy\\
\\ 
-\rho_l\int_{l-1}^{\tau}\int_{{\mathbb R}^n}\sum_{j=1}^n v^{r,\rho,l-1}_j(s,y)\frac{\partial v^{r,\rho,l-1}_i}{\partial x_j}(s,y)G_l(\tau-s,x-y)dyds\\
\\
+\rho_l\int_{l-1}^{\tau}\int_{{\mathbb R}^n}\int_{{\mathbb R}^n}\sum_{j,m=1}^n \left( \frac{\partial v^{r,\rho,l-1}_j}{\partial x_m}\frac{\partial v^{r,\rho,l-1}_m}{\partial x_j}\right) (l-1,y)\frac{\partial}{\partial x_i}K_n(z-y)\times\\
\\
\times G_l(\tau-s,x-z)dydzds.\\
\end{array}
\end{equation} 
Note that both summands in (\ref{scalparasystlin10v*}) are convolutions. 
\begin{rem}
Let us mention a notational convention: in the following the expression
\begin{equation}\label{not}
D^{\alpha}_xv^{r,\rho,l-1}_i(l-1,y)
\end{equation}
denotes the multivariate spatial derivative of order $\alpha$ evaluated at $y$ (some authors prefer to write $D^{\alpha}_yv^{r,\rho,l-1}_i(l-1,y)$ while others prefer to emphasize the difference between a variable and a value and prefer a notation as in (\ref{not}).
\end{rem}
Next let
\begin{equation}
\alpha^j:=\alpha-1_j=(\alpha_1-\delta_{1j}1,\alpha_2-\delta_{2j}\cdots ,\alpha_n-\delta_{nj})
\end{equation}
where for $1\leq i\leq n$ the symbol $\delta_{ij}$ denotes the Kronecker delta.

Hence, according to the convolution rule for the multivariate spatial derivative function $D^{\alpha}_xv^{\rho,1,l}_i$ we have for all $\tau\in [l-1,l]$ and all $x\in {\mathbb R}^n$ the representation
\begin{equation}\label{scalparasystlin10v*}
 \begin{array}{ll}
D^{\alpha}_xv^{\rho,1,l}_i(\tau,x)=\int_{{\mathbb R}^n}D^{\alpha}_xv^{r,\rho,l-1}_i(l-1,y)G_l(\tau,x-y)dy\\
\\ 
-\rho_l\int_{l-1}^{\tau}\int_{{\mathbb R}^n}\left( \sum_{j=1}^nD^{\alpha^j}_x\left(  v^{r,\rho,l-1}_j(s,y)\frac{\partial v^{r,\rho,l-1}_i}{\partial x_j}(s,y)\right) \right) G_{l,j}(\tau-s,x-y)dyds\\
\\
+2\rho_l\int_{l-1}^{\tau}\int_{{\mathbb R}^n}\int_{{\mathbb R}^n}\left( \sum_{j,m=1}^n \left( \left( D^{\alpha^m}_x\frac{\partial v^{r,\rho,l-1}_j}{\partial x_m}\right) \frac{\partial v^{r,\rho,l-1}_m}{\partial x_j}\right)\right)  (l-1,y)\frac{\partial}{\partial x_i}K_n(z-y) \times\\
\\
\times G_{l,m}(\tau-s,x-z)dydzds,\\
\end{array}
\end{equation} 
where for the last summand we have applied the convolution rule twice, and the factor $2$ in the last term is because of the symmetry in the product which is convoluted with the Laplacian kernel. 
We can conclude from this that for a time step size $\rho_l$ of order
\begin{equation}
\rho_l\sim \frac{1}{l},
\end{equation}
we have
\begin{equation}\label{dalphax}
D^{\alpha}_x\delta v^{\rho,l,1}_i=D^{\alpha}_xv^{\rho,l,1}_i-D^{\alpha}_xv^{r,\rho,l-1}_i(l-1,.)\sim 1.
\end{equation}
In order to get this conclusion we may argue via Fourier transform.
Fourier transformation ${\cal F}$ with respect to the spatial variable $x$ of the term in (\ref{firstrightns}) transfer convolutions into products, i.e., the term in (\ref{firstrightns}) equals
\begin{equation}\label{firstrightns*}
\begin{array}{ll}
{\cal F}\left( D^{\alpha}_xv^{r,\rho,l-1}_i(l-1,.)\right) {\cal F}\left( G_l(\tau,.)\right) -{\cal F}\left( D^{\alpha}_xv^{r,\rho,l-1}_i(l-1,.)\right)\\
\\
=-{\cal F}\left( D^{\alpha}_xv^{r,\rho,l-1}_i(l-1,.)\right)\left( 1- {\cal F}\left( G_l(\tau,.)\right)\right).
\end{array}
\end{equation}
The concrete expression of ${\cal F}\left( G_l(\tau,.)\right)$ depends on the definition of the Fourier transform which varies a little in the literature. If we define
\begin{equation}
{\cal F}(f)=\int_{{\mathbb R}^n} \exp(2\pi i\xi)f(x)dx,
\end{equation}
then the Fourier transform of the heat kernel looks like
\begin{equation}
{\cal F}\left( G_l(\tau,.)\right)=\exp\left(-4\pi\xi^2\rho_l\tau \right),
\end{equation}
such that the growth with respect to time (which is a parameter in this transformation)
(\ref{firstrightns*}) has the upper bound
\begin{equation}
\sup_{\xi\in {\mathbb R}^n}{\big |}{\cal F}\left( D^{\alpha}_xv^{r,\rho,l-1}_i(l-1,\xi)\right){\big |}{\big |}4\pi\xi^2\rho_l{\big |}\sim \sqrt{l-1}\frac{1}{l},
\end{equation}
and this growth behavior with respect to the time parameter is preserved surely if we transform back with to the spatial variables via inverse Fourier transform.

Note that the growth behavior in (\ref{dalphax}) implies that
\begin{equation}
D^{\alpha}_x v^{\rho,l,1}_i=D^{\alpha}_xv^{\rho,l,1}_i-D^{\alpha}_xv^{r,\rho,l-1}_i(l-1,.)\sim \sqrt{l-1}+1.
\end{equation}
We shall discuss this for the generalized scheme in more detail below, but the reason is essentially as follows. Since we are dealing with convolutions we know that the multivariate spatial derivatives of order $\alpha$ of the first term in (\ref{scalparasystlin10v*}) minus multivariate spatial derivatives of order $\alpha$ of the initial data at time step $l\geq 1$, i.e. the expression,
\begin{equation}\label{firstrightns}
\int_{{\mathbb R}^n}D^{\alpha}_xv^{r,\rho,l-1}_i(l-1,y)G_l(\tau,x-y)dy-D^{\alpha}_xv^{r,\rho,l-1}_i(l-1,.)
\end{equation}
becomes small for a small stepsize $\rho_l$. Furthermore, as $\rho_l\sim \frac{1}{l}$ and $v^{r,\rho,l-1}_i(l-1,.)\sim \sqrt{l-1}$ a small upper bound of the term in (\ref{firstrightns}) satisfies $\sim 1$, i.e., it is independent of the time step number $l\geq 1$. All the other summands in (\ref{scalparasystlin10v*}) are convolutions of $G_l$ with products of value functions of the form
\begin{equation}
v^{r,\rho,l-1}_j(s,y),\frac{\partial v^{r,\rho,l-1}_i}{\partial x_j}\sim \sqrt{l-1}
\end{equation}
where the product growth with respect to time of order $\sim (l-1)$ is compensated by the step size factor $\rho_l\sim \frac{1}{l}$. 
Note that this means that we can realise the bound
\begin{equation}\label{firstincrg}
D^{\alpha}_x\delta v^{\rho,l,1}_i\sim \sqrt{l-1}+1,~\mbox{ for }|\alpha|\leq m,
\end{equation}
and this has some consequence for a refinement of contraction of the construction result for the higher order approximations.
Again in the classical model, from (\ref{deltaurhok0}) we get the representation
\begin{equation}\label{deltaurhok0rep}
 \begin{array}{ll}
\delta v^{\rho,k+1,l}_i(\tau,x)=-\rho_l\int_{l-1}^{\tau}{{\mathbb R}^n}\sum_{j=1}^n v^{\rho,k-1,l}_j\frac{\partial \delta v^{\rho,k,l}_i}{\partial x_j}(s,y)G_l(\tau-s,x-y)dyds\\
\\
-\rho_l\int_{l-1}^{\tau}\int_{{\mathbb R}^n}\sum_j\delta v^{\rho,k,l}_j\frac{\partial v^{\rho,k,l}}{\partial x_j}(s,y)G_l(\tau-s,x-y)dyds+\\ 
\\
+\rho_l\int_{l-1}^{\tau}\int_{{\mathbb R}^n}\int_{{\mathbb R}^n}K_{n,i}(z-y){\Big (} \left( \sum_{j,m=1}^n\left( v^{\rho,k,l}_{m,j}+v^{\rho,k-1,l}_{m,j}\right)(s,y) \right)\times\\
\\
\times  \delta v^{\rho,k,l}_{j,m}(s,y) {\Big)}G_l(\tau-s,x-z)dydzds.
\end{array}
\end{equation}
Again, all summands in (\ref{deltaurhok0rep}) are convolutions and we may represent spatial derivatives of order $\alpha$ by convolution with spatial derivatives of first order of the fundamental heat equation solution $G_l$, and by derivatives of order at most $|\alpha|$ of the convoluted terms. Now consider the first term on the right side in (\ref{deltaurhok0rep}) for $k=1$, and observe the growth with respect to time or with respect to the time step number $l$. The first term on the right side of  (\ref{deltaurhok0rep}) looks like
\begin{equation}
\begin{array}{ll}
-\rho_l\int_{l-1}^{\tau}{{\mathbb R}^n}\sum_{j=1}^n v^{\rho,0,l}_j\frac{\partial \delta v^{\rho,1,l}_i}{\partial x_j}(s,y)G_l(\tau-s,x-y)dyds\\
\\
=-\rho_l\int_{l-1}^{\tau}{{\mathbb R}^n}\sum_{j=1}^n v^{r,\rho,l-1}_j(l-1,.)\frac{\partial \delta v^{\rho,1,l}_i}{\partial x_j}(s,y)G_l(\tau-s,x-y)dyds\\
\\
\sim \frac{1}{l}\sqrt{l-1}\sim \frac{1}{\sqrt{l}},
\end{array}
\end{equation}
because $\delta v^{\rho,1,l}_i\sim 1$ (as we have just observed). Similar for the second term on the right side in (\ref{deltaurhok0rep}). For $k=1$ we have
\begin{equation}
\begin{array}{ll}
-\rho_l\int_{l-1}^{\tau}\int_{{\mathbb R}^n}\sum_j\delta v^{\rho,1,l}_j\frac{\partial v^{\rho,1,l}}{\partial x_j}(s,y)G_l(\tau-s,x-y)dyds\\
\\
\sim\frac{1}{l}\sqrt{l}\sim \frac{1}{\sqrt{l}}.
\end{array}
\end{equation}
In both cases the convolution with the local fundamental solution has the effect of a constant in the upper bound (more details on that are given below in the case of H\"{o}rmander densities and in the proof of the local contraction result. The last term on the right side in (\ref{deltaurhok0rep}) is a double convolution where for $k=1$ we observe that
\begin{equation}
\begin{array}{ll}
\rho_l\int_{l-1}^{\tau}\int_{{\mathbb R}^n}\int_{{\mathbb R}^n}K_{n,i}(z-y){\Big (} \left( \sum_{j,m=1}^n\left( v^{\rho,1,l}_{m,j}+v^{r,\rho,l-1}_{m,j}\right)(s,y) \right)\times\\
\\
\times  \delta v^{\rho,1,l}_{j,m}(s,y) {\Big)}G_l(\tau-s,x-z)dydzds\\
\\
\sim\frac{1}{l}\sqrt{l}\sim \frac{1}{\sqrt{l}}.
\end{array}
\end{equation}
Since the value functions $\left( v^{\rho,1,l}_{m,j}+v^{r,\rho,l-1}_{m,j}\right)(s,y)$ and $\delta v^{\rho,1,l}_{j,m}(s,y)$ have polynomial decay of order $p\geq 2$ the convolution with the Laplacian kernel $K_{n,i}$ is finite and adds just another constant to an upper bound which is independent of time step number $l$. Again we shall observe this in more detail in the proof of the local contraction result below. The reasoning for multivariate derivative $D^{\alpha}_x\delta v^{\rho,l,k}_i$ uses the fact that the fundamental solution $G_l$ can take one spatial derivative and the other convolution terms involving the value function approximations take the other derivatives of order $|\alpha|-1$ via the convolution rule (as described above). Then we can proceed as before. Obviously at each approximation step we get at least one additional factor $\frac{1}{\sqrt{l}}$ (although we do not need this latter observation for the reasoning that the scheme is global - it is sufficient for the controlled scheme that all higher order terms satisfy the local contraction behavior and get a factor $\frac{1}{\sqrt{l}}$.
From these considerations it is clear that for $k\geq 2$ we get
\begin{equation}\label{secondincrgr}
D^{\alpha}_x\delta v^{r,\rho,l,k}_i\sim \left( \frac{1}{\sqrt{l}}\right)^{k-1},~\mbox{ for }|\alpha|\leq m,
\end{equation}
for the controlled scheme, since for $k\geq 2$ we have
\begin{equation}\label{identityr}
D^{\alpha}_x\delta v^{r,\rho,l,k}_i=D^{\alpha}_x\delta v^{\rho,l,k}_i,
\end{equation}
where we recall that the increments on the right side of (\ref{identityr}) are understood with respect to our local scheme which starts with $v^{r,\rho,l-1}_i(l-1,.)$, and on the right side we understand 
\begin{equation}
\begin{array}{ll}
\delta v^{r,\rho,l,k}_i=v^{r,\rho,l,k}_i-v^{r,\rho,l,k-1}_i
=v^{\rho,l,1}_i+\sum_{p=2}^k\delta v^{\rho,l,k}_i\\
\\
+\delta r^l_i-v^{\rho,l,1}_i-\sum_{p=2}^{k-1}\delta v^{\rho,l,k}_i-\delta r^l_i=\delta v^{\rho,l,k}_i
\end{array}
\end{equation}
 Note that in our notation
\begin{equation}
v^{\rho,l,1}_i=v^{r,\rho,l-1}_i(l-1)+\delta v^{\rho,l,1}_i
\end{equation}

As we said these observations motivate our definition of a control functions $r^l_i$ (or a part of the control function) in \cite{KB1} and \cite{KB2}, where we defined
\begin{equation}\label{deltarl}
\delta r^l_i=r^l_i-r^{l-1}_i(l-1,.)=-\delta v^{r,\rho,l,1}_i.
\end{equation}
This implies that we have
\begin{equation}\label{funciv3}
\begin{array}{ll}
v^{r,\rho ,l }_i=v^{r,\rho,l-1}_i+\sum_{k=1}^{\infty} \delta v^{r,\rho, l,k}_i\\
\\
=v^{r,\rho,l-1}_i+\delta v^{r,\rho,l,1}_i+\sum_{k=2}^{\infty} \delta v^{r,\rho, l,k}_i\\
\\
=v^{r,\rho,l-1}_i+\sum_{k=2}^{\infty} \delta v^{\rho, l,k}_i
,~1\leq i\leq n.
\end{array}
\end{equation}
Hence, we have
\begin{equation}
D^{\alpha}_xv^{r,\rho ,l }_i\sim \sqrt{l} \mbox{ for } |\alpha|\leq m.
\end{equation}
Furthermore, note that
\begin{equation}
D^{\alpha}_xr^l_i\sim l\mbox{ for } |\alpha|\leq m.
\end{equation}
Next we consider the situation of the generalized system. An analysis of the H\"{o}rmander estimates has the result that for each $x\in {\mathbb R}^n$ there is an $\epsilon >0$ and a ball $B_{\epsilon}(x)$ of radius $\epsilon >0$ around $x$ such that
\begin{equation}\label{apriorih0}
{\big |}1_{B_{\epsilon}(x)}G^l_H(\tau,x;s,y){\big |}\leq \frac{C}{(\tau-s)^{\alpha}(x-y)^{n-2\alpha}}
\end{equation}
for some $\alpha \in (0,1)$ and some constant $C>0$.
Here, $1_{B_{\epsilon}(x)}$ denotes the characteristic function which equals one on $B_{\epsilon}$ and is zero elsewhere. Furthermore, for the first order spatial derivatives we have
\begin{equation}\label{apriorih1}
{\big |}1_{B_{\epsilon}(x)}\frac{\partial}{\partial x_i}G^l_H(\tau,x;s,y){\big |}\leq \frac{C}{(\tau-s)^{\alpha}(x-y)^{n+1-2\alpha}}
\end{equation}
for some $\alpha \in (0,1)$ and some constant $C>0$. These estimates follow from the H\"{o}rmander estimates in \cite{H}. We shall give a detailed description in \cite{KH}, but cf. also our remarks at the end of the introduction of this paper.
Next for each $x\in {\mathbb R}^n$ we choose a ball $B_{\epsilon}(x)$ such that the estimates in (\ref{apriorih0}) and (\ref{apriorih1}) are satisfied. Then we consider
\begin{equation}
v^{\rho,l,1}_i(\tau,x)=v^{\rho,l,1}_{iB_{\epsilon}}(\tau,x)+v^{\rho,l,1}_{i(1-B_{\epsilon})}(\tau,x)
\end{equation}
where
\begin{equation}\label{scalparasystlin10vherB}
\begin{array}{ll}
v^{\rho,l,1}_{iB}(\tau,x):=\int_{B_{\epsilon}(x)}v^{r,\rho,l-1}_i(l-1,y)G^l_H(\tau,x,l-1,y)dy\\
\\ 
-\rho_l\int_{l-1}^{\tau}\int_{B_{\epsilon}(x)}V_{B}\left[v^{r,\rho,l-1}(l-1,.)\right] v^{r,\rho,l-1}_i(l-1,y)G^l_H(\tau,x;s,y)dyds+\\
\\
\rho_l\int_{l-1}^{\tau}\int_{B_{\epsilon}(x)}\int_{{\mathbb R}^n}\sum_{j,m=1}^n \left(c_{jm} \frac{\partial v^{r,\rho,l-1}_m}{\partial x_j}(l-1,.)\frac{\partial v^{r,\rho,l-1}_j}{\partial x_m}(l-1,.)\right) (\tau,y)\times\\
\\
\times\frac{\partial}{\partial x_i}K^{\mbox{ell}}_n(z-y)G^l_H(\tau,x;s,z)dydzds,
\end{array}
\end{equation}
and 
\begin{equation}\label{scalparasystlin10vher(1-B)}
\begin{array}{ll}
v^{\rho,l,1}_{i(1-B)}(\tau,x):=\int_{{\mathbb R}^n\setminus B_{\epsilon}(x)}v^{r,\rho,l-1}_i(l-1,y)G^l_H(\tau,x;l-1,y)dy\\
\\ 
-\rho_l\int_{l-1}^{\tau}\int_{{\mathbb R}^n\setminus B_{\epsilon}(x)}V_{B}\left[v^{r,\rho,l-1}(l-1,.)\right] v^{r,\rho,l-1}_i(l-1,y)G^l_H(\tau,x;s,y)dyds+\\
\\
\rho_l\int_{l-1}^{\tau}\int_{{\mathbb R}^n\setminus B_{\epsilon}(x)}\int_{{\mathbb R}^n}\sum_{j,m=1}^n \left(c_{jm} \frac{\partial v^{r,\rho,l-1}_m}{\partial x_j}(l-1,.)\frac{\partial v^{r,\rho,l-1}_j}{\partial x_m}(l-1,.)\right) (\tau,y)\times\\
\\
\times\frac{\partial}{\partial x_i}K^{\mbox{ell}}_n(z-y)G^l_H(\tau,x;s,z)dydzds.
\end{array}
\end{equation} 
The latter term can be treated as before in the case of constant viscosity by using the Kusuoka-Stroock estimates. Since $|x-y|\geq \epsilon$ in the integrals of (\ref{scalparasystlin10vher(1-B)}) we can differentiate the kernel in order to get representations of the spatial derivatives $D^{\alpha}_xv^{\rho,l,1}_i$ of the first approximation at time step $l\geq 1$, i.e., we have for all $|\alpha|\leq m$ the representation
\begin{equation}\label{scalparasystlin10vher(1-B)alpha}
\begin{array}{ll}
D^{\alpha}_xv^{\rho,l,1}_{i(1-B)}(\tau,x):=\int_{{\mathbb R}^n\setminus B_{\epsilon}(x)}v^{r,\rho,l-1}_i(l-1,y)D^{\alpha}_xG^l_H(\tau,x;l-1,y)dy\\
\\ 
-\rho_l\int_{l-1}^{\tau}\int_{{\mathbb R}^n\setminus B_{\epsilon}(x)}V_{B}\left[v^{r,\rho,l-1}(l-1,.)\right] v^{r,\rho,l-1}_i(l-1,y)D^{\alpha}_xG^l_H(\tau,x;s,y)dyds+\\
\\
\rho_l\int_{l-1}^{\tau}\int_{{\mathbb R}^n\setminus B_{\epsilon}(x)}\int_{{\mathbb R}^n}\sum_{j,m=1}^n \left(c_{jm} \frac{\partial v^{r,\rho,l-1}_m}{\partial x_j}(l-1,.)\frac{\partial v^{r,\rho,l-1}_j}{\partial x_m}(l-1,.)\right) (\tau,y)\times\\
\\
\times\frac{\partial}{\partial x_i}K^{\mbox{ell}}_n(z-y)D^{\alpha}_xG^l_H(\tau,x;s,z)dydzds.
\end{array}
\end{equation} 
We get upper bounds of ${\big |}D^{\alpha}_xv^{\rho,l,1}_i(\tau,x){\big |}$ by estimating 
the modulus of multivariate spatial derivatives of the density ${\big |}D^{\alpha}_xG^l_H(\tau,x;s,z){\big|}$ by the Kusuoka-Stroock estimates or by the estimates we provide in \cite{KH}, and estimate the modulus of the other integrands similar as in the case of constant viscosity using inductive information of time growth $D^{\alpha}_xv^{r,\rho,l-1}_i(l-1,.)\sim \sqrt{l-1}$ for multiindices $|\alpha|\leq m$ and some $m\geq 2$. Note that the upper bound that we get is a sum of convolutions. As in the case of constant viscosity we may use Fourier transformation with respect to thespatial variables in order to estmate the growth of the first summand with respect to the time step number, adn we may use inductive information of time growth and the choice of $\rho_l\sim \frac{1}{l}$ for the other summands. Hence, similar as in the case of constant viscosity described above we get
\begin{equation}\label{firstorder(1-B)}
{\big |}D^{\alpha}_xv^{\rho,l,1}_{i(1-B)}(\tau,x)-D^{\alpha}_xv^{r,\rho,l-1}_{i(1-B)}(l-1,x){\big |}\sim 1.
\end{equation}
For the complement term considered in (\ref{scalparasystlin10vherB}) an additional step is needed. For the summand of first order approximation at time step $l$, i.e., the function $v^{\rho,l,1}_{iB}$, and its first order spatial derivatives we may use the estimates in (\ref{apriorih0}) and (\ref{apriorih1}) in
\begin{equation}\label{scalparasystlin10vherBmod}
\begin{array}{ll}
{\big |}D^{\beta}_xv^{\rho,l,1}_{iB}(\tau,x){\big |}:=\int_{B_{\epsilon}(x)}{\big |}v^{r,\rho,l-1}_i(l-1,y){\big |}{\big |}G^l_H(\tau,x,l-1,y){\big |}dy\\
\\ 
-\rho_l\int_{l-1}^{\tau}\int_{B_{\epsilon}(x)}{\big |}V_{B}\left[v^{r,\rho,l-1}(l-1,.)\right] v^{r,\rho,l-1}_i(l-1,y){\big |}{\big |}G^l_H(\tau,x;s,y){\big |}dyds+\\
\\
\rho_l\int_{l-1}^{\tau}\int_{B_{\epsilon}(x)}\int_{{\mathbb R}^n}\sum_{j,m=1}^n {\big |}\left(c_{jm} \frac{\partial v^{r,\rho,l-1}_m}{\partial x_j}(l-1,.)\frac{\partial v^{r,\rho,l-1}_j}{\partial x_m}(l-1,.)\right) (\tau,y){\big |}\times\\
\\
\times{\big |}\frac{\partial}{\partial x_i}K^{\mbox{ell}}_n(z-y){\big |}{\big |}G^l_H(\tau,x;s,z){\big |}dydzds,
\end{array}
\end{equation}
where the multiindices $\beta$ satisfy $0\leq |\beta|\leq 1$. For spatial derivatives of order $\alpha$ with $|\alpha|\geq 2$ we need the local adjoint $G^{l,B_{\epsilon}(x),*}_H(\tau,x;s,z)$(cf. \cite{KH} and the remark below) and use a representation via the adjoint an estimate via
\begin{equation}\label{scalparasystlin10vherBmod}
\begin{array}{ll}
{\big |}D^{\alpha}_xv^{\rho,l,1}_{iB}(\tau,x){\big |}:=\int_{B_{\epsilon}(x)}{\big |}D^{\gamma}_xv^{r,\rho,l-1}_i(l-1,y){\big |}{\big |}G^{l,B_{\epsilon}(x),*}_H(\tau,x,l-1,y){\big |}dy\\
\\ 
-\rho_l\int_{l-1}^{\tau}\int_{B_{\epsilon}(x)}{\big |}V_{B}\left[v^{r,\rho,l-1}(l-1,.)\right] v^{r,\rho,l-1}_i(l-1,y){\big |}{\big |}G^{l,B_{\epsilon}(x),*}_H(\tau,x;s,y){\big |}dyds+\\
\\
\rho_l\int_{l-1}^{\tau}\int_{B_{\epsilon}(x)}\int_{{\mathbb R}^n}\sum_{j,m=1}^n {\big |}\left(c_{jm} \frac{\partial v^{r,\rho,l-1}_m}{\partial x_j}(l-1,.)\frac{\partial v^{r,\rho,l-1}_j}{\partial x_m}(l-1,.)\right) (\tau,y){\big |}\times\\
\\
\times{\big |}\frac{\partial}{\partial x_i}K^{\mbox{ell}}_n(z-y){\big |}{\big |}G^{l,B_{\epsilon}(x),*}_H(\tau,x;s,z){\big |}dydzds .
\end{array}
\end{equation}
We get
\begin{equation}
{\big |}D^{\alpha}_xv^{\rho,l,1}_{iB}(\tau,x)-D^{\alpha}_xv^{r,\rho,l-1}_{iB}(l-1,x){\big |}\sim 1,
\end{equation}
and together with (\ref{firstorder(1-B)}) we get
\begin{equation}
{\big |}D^{\alpha}_xv^{\rho,l,1}_{i}(\tau,x)-D^{\alpha}_xv^{r,\rho,l-1}_{i}(l-1,x){\big |}\sim 1.
\end{equation}
\begin{rem}
For parabolic equation with strictly spatial elliptic operators each density $p$ has a adjoint $p^*$ (which solves a parabolic adjoint equation) such that
\begin{equation}
p(t,x;s,y)=p^*(s,y;t,x) \mbox{ and }~D^{\alpha}_xp(t,x;s,y)=D^{\alpha}_yp^*(s,y;t,x)
\end{equation}
For H\"{o}rmander diffusion we can define a local adjoint, i.e., for each argument $x\in {\mathbb R}^n$ there is a ball $B_{\epsilon}(x)$ of radius $\epsilon>0$ around $x$ such that the H\"{o}rmander density has a local adjoint on this ball. We give the details for this in \cite{KH}. The main reason is this: the H\"{o}rmander condition encodes infinitesimal rotations and shifts caused by the drift term with diffusions caused by the second order terms. This leads to the possibility of local expansions of the density and the construction of local adjoints (cf. \cite{KH}).
\end{rem}

Next we consider the higher order terms looking for a refinement of the local contraction result regarding the dependence of the contraction constant and the time step number of the scheme. In order to have a global linear bound for the controlled scheme (of type iiia)) it is essential to have
\begin{equation}
\sum_{k=2}^{\infty} \delta v^{\rho,l,k}_i\sim \frac{1}{\sqrt{l}}
\end{equation}
for all $1\leq i\leq n$.
The essential step is to establish such a growth behavior with respect to the time step number for the functional increments $\delta v^{\rho,l,k}_i$. Note that we have
\begin{equation}
\sum_{k=2}^{\infty} \delta v^{\rho,l,k}_i=\sum_{k=2}^{\infty} \delta v^{r,\rho,l,k}_i
\end{equation}
for all controlled schemes proposed in the introduction, such that this result for the uncontrolled scheme transfers to any of the controlled schemes directly. Again we start with multivariate spatial derivatives of order $0\leq |\beta|\leq 1$ and split the representation 
for $k=1$ in two summands choosing for each $x\in {\mathbb R}^n$ and $\epsilon >0$ and a ball $B_{\epsilon}(x)$ of radius $\epsilon>0$ around $x$ such that the a priori estimates (\ref{apriorih0}) and (\ref{apriorih1}) hold. We get the representation
\begin{equation}
\delta v^{\rho,l,2}_i(\tau,x)=\delta v^{\rho,l,2}_{iB}(\tau,x)+\delta v^{\rho,l,2}_{i(1-B)}(\tau,x)
\end{equation}
for all $(\tau,x)\in [l-1,l]\times {\mathbb R}^n$, where
\begin{equation}\label{deltaurhok0herk2(1-B)}
\begin{array}{ll}
D^{\beta}_x\delta v^{\rho,l,2}_i(\tau,x)=\\
\\ 
-\rho_l\int_{l-1}^{\tau}\int_{{\mathbb R}^n\setminus B_{\epsilon}(x)}\left( V_{B}\left[v^{\rho,l,1}\right] \delta v^{\rho,l,1}_i+V_{B}\left[\delta v^{\rho,l,1}\right] v^{\rho,l,1}_i\right) (s,y)\times\\
\\
\times D^{\beta}_xG^l_H(\tau-s,x-y)dyds+\rho_l\int_{l-1}^{\tau}\int_{{\mathbb R}^n\setminus B_{\epsilon}(x)}\int_{{\mathbb R}^n}K^{\mbox{ell}}_{n,i}(z-y)\times\\
\\
{\Big (} c_{jm}\left( \sum_{j,m=1}^n\left( v^{\rho,l,1}_{m,j}(\tau,y)+v^{r,\rho,l-1}_{m,j}(l-1,y)\right) \right)  \delta v^{\rho,l,1}_{j,m}(s,y) {\Big)}\times\\
\\
\times D^{\beta}_xG^l_H(\tau-s,x-z)dydzds,
\end{array}
\end{equation}
and
\begin{equation}\label{deltaurhok0herk2B}
\begin{array}{ll}
D^{\beta}_x\delta v^{\rho,l,2}_{iB}(\tau,x)=\\
\\ 
-\rho_l\int_{l-1}^{\tau}\int_{B_{\epsilon}(x)}\left( V_{B}\left[v^{\rho,l,1}\right] \delta v^{\rho,l,1}_i+V_{B}\left[\delta v^{\rho,l,1}\right] v^{\rho,l,1}_i\right) (s,y)\times\\
\\
\times D^{\beta}_xG^l_H(\tau-s,x-y)dyds+\rho_l\int_{l-1}^{\tau}\int_{B_{\epsilon}(x)}\int_{{\mathbb R}^n}K^{\mbox{ell}}_{n,i}(z-y)\times\\
\\
{\Big (} c_{jm}\left( \sum_{j,m=1}^n\left( v^{\rho,l,1}_{m,j}(\tau,y)+v^{\rho,l-1}_{m,j}(l-1,y)\right) \right)  \delta v^{\rho,l,1}_{j,m}(s,y) {\Big)}\times\\
\\
\times D^{\beta}_xG^l_H(\tau-s,x-z)dydzds.
\end{array}
\end{equation}
For the term in (\ref{deltaurhok0herk2(1-B)}) we may use the Kusuoka-Stroock estimates in order to get a  upper bound for the modulus ${\big |}D^{\beta}_x\delta v^{\rho,l,2}_i(\tau,x){\big |}$ for $0\leq |\beta|\leq 1$, and this type of upper bound can be extended to higher order derivatives for ${\big |}D^{\alpha}_x\delta v^{\rho,l,2}_i(\tau,x){\big |}$ and any multiindex $\alpha$ with $|\alpha|\geq 0$ just by using the Kusuoka-Stroock estimates. The involved constants are surely independent of the time step number $l\geq 1$ as far as the upper bounds of the fundamental solution are concerned. For the local terms in (\ref{deltaurhok0herk2B}) we may use the local a priori estimates for H\"{o}rmander diffusions.

\section{Global bound of the Leray projection term, local and global solutions}
  
We have observed that the functions
\begin{equation}\label{gbcontrol1}
l\rightarrow |v^{r,\rho,l}_j(l,.)|^2_{H^m}
\end{equation}
for $m\geq 2$ of a controlled scheme with control function $r^l_i$ of type iiia) or iiib) have linear linear growth with respect to the time step number $l$. Moreover, the control function $r^l_i$ themselves satisfy
\begin{equation}\label{gbcontrol2}
r^l_i(l,.)\sim l.
\end{equation}
This means that we have obtained a global regular solution $\mathbf{v}=(v_1,\cdots,v_n)$ which is defined in transformed time coordinates $\tau=\rho_l t$ on the domains $[l-1,l]\times {\mathbb R}^n$ by
\begin{equation}\label{classicalrep}
v^{\rho,l}_i(\tau,x)=v^{r,\rho,l}_i(\tau,x)-r^l_i(\tau,x)
\end{equation}
On each domain $\left[l-1,l\right]\times {\mathbb R}^n$ the function $v^{\rho,l}_i\in C^{1,2}\left( \left(l-1,l\right)\times {\mathbb R}^n\right) $ is a local classical solution
of the generalized (highly degenerate) incompressible Navier Stokes equation, and as we have shown that
\begin{equation}
\sup_{l\in {\mathbb N}}|v^{r,\rho,l}_i(l,.)| \mbox{ is bounded }
\end{equation}
for all $1\leq i\leq n$, and we have a global linear upper bound of the control functions $r^l_i$ with respect to the time step number $l\geq 1$, i.e.,
\begin{equation}
\sup_{l\in {\mathbb N}}|r^l_i(l,.)|\leq Cl
\end{equation}
for some constant $C>0$ and all $1\leq i\leq n$, we have also a global linear upper bound with respect to the time step number $l\geq 1$ of the value functions $v_i$, i.e., we have
\begin{equation}
\sup_{l\in {\mathbb N}}|v^{\rho,l}_i(l,.)|\leq Cl.
\end{equation}
As both summands on the right side of (\ref{classicalrep}) are locally $C^{1,2}$ on the domains $\left(l-1,l\right)\times{\mathbb R}^n$, this is also true for $v^{\rho,l}_i,~1\leq i\leq n$. Furthermore, if  $v^{r,\rho}_i,~1\leq i\leq n$ denotes the global controlled velocity function on $[0,\infty)\times {\mathbb R}^n$ with $v^{r,\rho}_i(\tau,x)=v^{r,\rho,l}_i/\tau,x)$ for $\tau\in [l-1,l]\times {\mathbb R}^n$, and $r_i:[0,\infty)\times {\mathbb R}^n\rightarrow {\mathbb R}$ denote the global control functions for $1\leq i\leq n$ with $r_i(\tau,x)=r^l_i(\tau,x)$ for $\tau\in [l-1,l]\times {\mathbb R}^n$, then we have by construction (and our argument above) that the functions $v^{r,\rho}_i,~1\leq i\leq n$ and $r_i,~ 1\leq i\leq n$ are globally Lipschitz on the whole domain $\left[0,\infty\right)\times {\mathbb R}^n$. Hence $v^{\rho}_i:=v^{r,\rho}_i-r_i$ is globally Lipschitz on the whole domain $[0,\infty)\times {\mathbb R}^n$ and such that for all time step number $l$ the restrictions of $v^{\rho}_i$ are in $C^{1,2}\left( (l-1,l)\times {\mathbb R}^n\right) $. It follows then by standard regularity theorems that $v^{\rho}_i$ is a global classical solution of the generalized incompressible Navier Stokes equation in time 
transformed coordinates. Hence the globally defined functions $v_i,~1\leq i\leq n$ with $v_i(t,x)=v_i(\tau,x),~1\leq i\leq n$ with $1\leq i\leq n$ and $t=\rho_l \tau$ for all $[l-1,l)\times {\mathbb R}^n$ and $l\geq $ is a global classical solution of the original generalized incompressible Navier Stokes equation. We can sharpen this result a bit and prove the existence of global upper bounds which are completely independent of the  time step number.   
Indeed, next we consider controlled scheme which lead to a global bound of the Leray projection term which is independent of the time step number. We have found several different arguments for this conclusion. First consider the controlled scheme with
\begin{equation}
v^{r,\rho,l}_j=v^{\rho,l}_j+r^{l}_j
\end{equation}
for some functions $r^l_j$, where
\begin{equation}\label{controll100}
r^{l}_j-r^{l-1}_j=-\left( v^{\rho,l,1}_j-v^{r,\rho,l-1}_j(l-1,.)\right)+\int_{l-1}^{\tau}\phi^{l}_j(s,y)G^l_H(\tau-s,x-y)dyds,
\end{equation}
and where the source term in (\ref{controll1}) is of the form
\begin{equation}\label{sourceleray00}
\phi^{l}_j(s,y)=-\frac{v_j^{r,\rho,l-1}}{C}(l-1,.)-\frac{r^{l-1}_i(l-1,.)}{C^2},
\end{equation}
At time step $l=1$ we are free to choose the 'data' $r^{l-1}_i(l-1,.)=r^{0}_i(0,.)$ for $1\leq i\leq n$. We may choose them such that they 'have the same signs' as the data $h_i$ of the Cauchy problem, i.e., we choose
\begin{equation}
r^0_i(0,.)=\frac{h_i(.)}{C}~\mbox{ for all $1\leq i\leq n$.} 
\end{equation}
The reasoning is then as follows. Assume we have computed $v^{r,\rho,l-1}_i$ and $r^{l-1}_i$ for all $1\leq i\leq n$ and for $l\geq 2$. In our controlled scheme we first compute the local solution of the generalised incompressible Navier Stokes equation with data $v^{r,\rho,l-1}_i(l-1,.)$ for all $1\leq i\leq n$ via the functional series
\begin{equation}
v^{r^{l-1},\rho,l}_i=v^{r,\rho,l-1}_i(l-1,.)+\delta v^{\rho,l,1}_i+\sum_{k\geq 2}\delta v^{\rho,l,k}_i,
\end{equation}
where we indicate with the superscript $r^{l-1}$ that we compute the local solution of the generalised (but otherwise uncontrolled) incompressible Navier Stokes equation with respect to the data $v^{r,\rho,l-1}_i(l-1,.)$ which include the information of the control function from the previous time step. Then looking at the controlled function (including the control function at time step $l$ we add the increment $\delta r^l_i$ defined in (\ref{controll100}) and get the representation
\begin{equation}
v^{r,\rho,l}_i=v^{r^{l-1},\rho,l}_i+\delta r^l_i=v^{r,\rho,l-1}_i(l-1,.)+\sum_{k\geq 2}\delta v^{\rho,l,k}_i+\int_{l-1}^{\tau}\phi^{l}_j(s,y)G^l_H(\tau-s,x-y)dyds.
\end{equation}

We have observed that the subtraction of the first order increments $\delta v^{\rho,l,1}_i$ via the control function increments $\delta r^l_i$ is useful in order to preserve polynomial decay of the controlled velocity value functions. For the classical incompressible Navier Stokes equation (even with variable strictly elliptic viscosity) we do not need this.
Now as $\rho_l$ becomes small the higher order correction
\begin{equation}
v^{r,\rho,l-1}_i(l-1,.)+\sum_{k\geq 2}\delta v^{\rho,l,k}_i(l,.)
\end{equation}
become small compared to the source term
\begin{equation}
\int_{l-1}^{l}\phi^{l}_j(s,y)G^l_H(\tau-s,x-y)dyds
\end{equation}
where for $\rho_l$ small the diffusion effect of the kernel $G^l_H$ is small (similar as in the scheme for classical model for $G_l$). For this reason if we defined the control function increment via (\ref{controll1}) below, we would get a global  linear bounds
\begin{equation}\label{addlast}
|v^{r,\rho,l}_i(l,.)|\leq Cl,~|r^l_i(l,.)|\leq Cl
\end{equation}
for some $C>0$ by arguments similar as in the preceding sections. This looks a little worse
than what we would get if we defined the control function via (\ref{controll1}) below, because in that case we get a uniform bound for the controlled velocity functions in addition.  Well, the estimates in (\ref{addlast}) and similar estmates for derivatives lead us still to the conclusion of the existence of a global classical solution. However the extended scheme leads to the slight improvement that we have global uniform bounds for the controlled velocity functions and for the control functions. Since this is only a slight improvement and the main theorem of a global classical solution is achieved without these additional observations, we only sketch the argument.

Consider an argument $x\in {\mathbb R}^n$. As $l\geq 1$ varies and as long as 
\begin{equation}\label{addlast2}
v^{r,\rho,l}_i(l,x) \mbox{~and~}r^l_i(l,x)
\end{equation}
have the same sign, we observe that we have a uniform upper bound for both function independent of the time step number $l\geq 1$. Now, if $l_0$ is the first times step number such that
\begin{equation}\label{addlast2}
v^{r,\rho,l}_i(l_0,x) \mbox{~and~}r^l_i(l_0,x)
\end{equation}
have different signs, then we observe that the function
\begin{equation}\label{addlast3}
l\rightarrow |v^{r,\rho,l}_i(l,x)+r^l_i(l,x)|,~l\geq l_0
\end{equation}
has - for time step sizes $\rho_l>0$ of order $\rho_l\sim \frac{1}{C^4}$ the tendency to fall forever or up to the time step number $l_e$ where both functions in (\ref{addlast2}) have the same sign again in the following sense: either we have that
\begin{equation}
v^{r,\rho,l}_i(l,x),r^l_i(l,x)\sim \frac{1}{C^2},
\end{equation}
the function in (\ref{addlast}) is decreasing after finitely many time steps in the sense that for any argument $l_0\leq l\leq l_e$  or $l\geq l_0$ (if $l_e=\infty$) we find a $l'\geq l$ such that 
\begin{equation}
|v^{r,\rho,l}_i(l',x)+r^l_i(l',x)|\leq |v^{r,\rho,l}_i(l,x)+r^l_i(l,x)|.
\end{equation}
Many cases have to be considered for this argument and in order to make it fully precise we have also to show how exactly the time step sizehas to be chosen. However, since this is only a slight improvement of the general argument of a global linear bound (which is enough in order to prove our main theorem) we shall not provide all the details here.

Let us make some additional remarks. 
We make some additional observation concerning ia). The schemes consdiered are schemes with bounded controlled velocity functions and with control functions which are linearly bounded. First we consider a scheme
\begin{equation}
v^{r,\rho,l,k}_j=v^{\rho,l,k}_j+r^{l,0}_j
\end{equation}
for some functions $r^l_j$, where
\begin{equation}\label{controll1}
r^{l,0}_j-r^{l-1,0}_j=-\left( v^{\rho,l,1}_j-v^{r,\rho,l-1}_j(l-1,.)\right)+\int_{l-1}^{\tau}\phi^{l,0}_j(s,y)G_l(\tau-s,x-y)dyds,
\end{equation}
and where the source term in (\ref{controll1}) is of the form
\begin{equation}\label{sourceleray}
\phi^{l,0}_j(s,y)=-\frac{v_j^{r,\rho,l-1}}{C}(l-1,.),
\end{equation}
and show that this is a global scheme in case of simple models with constant viscosity. This scheme has the advantage that it is possible to choose a uniform time step size. Note that the scheme in ia) is without the increment $-\left(v^{\rho,l,1}_j-v^{\rho,l-1}_j(l-1,.)\right)$, but the proof is similar and it would be cumbersome to list all variations of argument. The choice in (\ref{controll1}) has the advantage that it works also for the generalized degenerate model as we shall observe. In a second step, and in order to get a uniform global bound we consider control functions with 
\begin{equation}\label{controll1}
r^{l}_j-r^{l-1}_j=-\left( v^{\rho,l,1}_j-v^{r,\rho,l-1}_j(l-1,.)\right)+\int_{l-1}^{\tau}\phi^{l}_j(s,y)G_l(\tau-s,x-y)dyds,
\end{equation}  
and where the source term is as in \cite{KB3}, i.e., 
\begin{equation}\label{sourceleray*}
\phi^{l}_j(s,y)=-\frac{v_j^{r,\rho,l-1}}{C}(l-1,.)-\frac{r^{l-1}_j}{C^2}(l-1,.).
\end{equation} 
Let us consider the simple model with constant viscosity first. First we note that it is a major step to show that for given $m\geq 2$ and for all $1\leq i\leq n$ and all multiindices $\alpha$ with $|\alpha|\leq m$ we have the implication
\begin{equation}\label{boundedimpl}
\sup_{x\in {\mathbb R}^n}{\big |}D^{\alpha}_xv^{r,\rho,l-1}_i(l-1,.){\big |}\leq C\Rightarrow\sup_{x\in {\mathbb R}^n}{\big |}D^{\alpha}_xv^{r,\rho,l}_i(l,.){\big |}\leq C
\end{equation}
for some constant $C>0$ and all $l\geq 1$. If (\ref{boundedimpl}) holds, and we have a linear bound
\begin{equation}
|D^{\alpha}_x r^l_i(l,.)|\sim l,
\end{equation}
then we have a global linear bound for the velocity function $v^{\rho,l}_i=v^{r,\rho,l}_i-r^l_i$.
Note that the increment (or 'decrement') of the controlled value function at time step $l$ satisfies for all $1\leq i\leq n$
\begin{equation}\label{incrementcontrol}
\begin{array}{ll}
\delta v^{r,\rho,l}_i=v^{r,\rho,l}_i(l,.)-v^{r,\rho,l-1}_i(l-1,.)\\
\\
=\sum_{k\geq 2}\delta v^{r,\rho,l,k}_i(l,.)+\int_{l-1}^{\tau}\phi^{l,0}_j(s,y)G_l(\tau-s,x-y)dyds\\
\\
=\sum_{k\geq 2}\delta v^{\rho,l,k}_i(l,.)+\int_{l-1}^{l}\phi^{l,0}_j(s,y)G_l(\tau-s,.-y)dyds,
\end{array}
\end{equation}
where the higher order increments satisfy a local contraction with contraction factor $\frac{1}{2}$ such that
\begin{equation}\label{obgub1}
\sum_{k\geq 2}{\big |}\delta v^{\rho,l,k}_i(l,.){\big |}_{C^1((l-1,l), H^{2m})}
\leq {\big |}\delta v^{\rho,l,1}_i(l,.){\big |}_{C^1((l-1,l), H^{2m})}.
\end{equation}
As the time step size $\rho_l>0$ becomes small we know that
\begin{equation}\label{obgub2}
{\big |}\delta v^{\rho,l,1}_i(l,.){\big |}_{C^1((l-1,l)\times H^{2m}}\leq \frac{1}{4},
\end{equation}
while $G_l$ is close to identity. Consider first the controlled velocity value function themselves, i.e., consider $\alpha =0$. If $|v^{r,\rho,l-1}_i(l-1,x)|\in\left[\frac{3C}{4},C \right]$ for some $x\in {\mathbb R}^n$ then for small $\rho_l>0$ we have
\begin{equation}\label{alpha0}
\begin{array}{ll}
{\big |}\int_{l-1}^{l}\phi^{l,0}_j(s,y)G_l(\tau-s,x-y)dyds{\big |}\\
\\
\geq {\big |}\int_{l-1}^{l}\left( -\frac{v_j^{r,\rho,l-1}}{C}(l-1,.)(s,y)\right) G_l(\tau-s,x-y)dyds{\big |}\\
\\
\geq \frac{1}{2}(l-(l-1)=\frac{1}{2},
\end{array}
\end{equation}
such that with the observations in (\ref{obgub1}) and in (\ref{obgub2}) we get indeed  (\ref{boundedimpl}) for $\alpha=0$. Similar for $\alpha >0$ where we note that we may use a convolution rule in order to have the estimate for derivatives in \ref{alpha0}. As we have
\begin{equation}
\delta D^{\alpha}_xv^{\rho,l,1}_i(l,.)\sim 1
\end{equation}
and
\begin{equation}
D^{\alpha}_x\int_{l-1}^{l}\phi^{l,0}_j(s,y)G_l(\tau-s,x-y)dy\sim 1
\end{equation}
we have
\begin{equation}
D^{\alpha}_x\delta r^{l,0}_i(l,.)\sim 1, \mbox{ whence }D^{\alpha}_xr^l_i(l,.)\sim l,
\end{equation}
such that the control functions are linearly bounded.

Note that in the application of the local contraction result we used
\begin{equation}
\sum_{k\geq 2}\delta v^{r,*,\rho,l,k}_i=\sum_{k\geq 2}\delta v^{*,\rho,l,k}_i.
\end{equation}
We also used the inheritance of polynomial spatial decay of our scheme in order to conclude from (\ref{boundedimpl}) that a global bound exists with respect to the $|.|_{C^1((l-1,l),H^{2m})}$-norm. 
For the growth of the first order increment and its multivariate spatial derivatives we may use
\begin{equation}\label{scalparasystlin10v*uniglobalupperbound}
 \begin{array}{ll}
D^{\alpha}_x\delta v^{\rho,1,l}_i(\tau,x)=\int_{{\mathbb R}^n}D^{\alpha}_xv^{r,\rho,l-1}_i(l-1,y)G_l(\tau,x-y)dy-v^{r,\rho,l-1}_i(l-1,x)\\
\\ 
-\rho_l\int_{l-1}^{\tau}\int_{{\mathbb R}^n}\left( \sum_{j=1}^nD^{\alpha^j}_x\left(  v^{r,\rho,l-1}_j(s,y)\frac{\partial v^{r,\rho,l-1}_i}{\partial x_j}(s,y)\right) \right) G_{l,j}(\tau-s,x-y)dyds\\
\\
+2\rho_l\int_{l-1}^{\tau}\int_{{\mathbb R}^n}\int_{{\mathbb R}^n}\left( \sum_{j,m=1}^n \left( \left( D^{\alpha^m}_x\frac{\partial v^{r,\rho,l-1}_j}{\partial x_m}\right) \frac{\partial v^{r,\rho,l-1}_m}{\partial x_j}\right)\right)  (l-1,y)\frac{\partial}{\partial x_i}K_n(z-y) \times\\
\\
\times G_{l,m}(\tau-s,x-z)dydzds,
\end{array}
\end{equation} 
where the reasoning is similar as in the preceding section. The next step is to show that we can get a uniform bound from this for an extended control function (as in iv) in our list), i.e., an upper bound which does not depend on the time step number $l\geq 1$. First we reconsider the argument for (\ref{boundedimpl}) 
for some large constant $C>2$ and all $l\geq 1$ in case of a control function $r^l_i$ which have a asymmetry build in. Assume that (\ref{boundedimpl}) holds for the extended control function and for $|\alpha|\leq m$ and assume in addition that
\begin{equation}
|D^{\alpha}_x r^l_i(l,x)|\in \left[2C^2,2C^2+1\right].
\end{equation}
In this case we know that for small $\rho_l>0$ 
\begin{equation}\label{incrementcontrol}
\begin{array}{ll}
{\big |}\int_{l-1}^{\tau}\phi^{l}_j(s,y)G_l(\tau-s,x-y)dyds{\big |}\\
\\
={\big |}\int_{l-1}^{l}\left( -\frac{v_j^{r,\rho,l-1}}{C}(l-1,.)-\frac{r^{l-1}_j}{C^2}(s,y)\right) G_l(\tau-s,.-y)dyds{\big |}\\
\\
={\big |}\int_{l-1}^{l}\left( -\frac{v_j^{r,\rho,l-1}}{C}(l-1,.)-\frac{r^{l-1}_j}{C^2}(s,y)\right) G_l(\tau-s,.-y)dyds{\big |}\\
\\
\geq {\big |}\int_{l-1}^{l}\left( -1+2\right) \frac{1}{2}ds{\big |}\geq \frac{1}{2},
\end{array}
\end{equation}
while
\begin{equation}\label{incrementcontrol}
\begin{array}{ll}
|\delta v^{r,\rho,l}_i|=|v^{r,\rho,l}_i(l,.)-v^{r,\rho,l-1}_i(l-1,.)|\leq \frac{1}{4}.
\end{array}
\end{equation}
Similar for multivariate spatial derivative of order $|\alpha| >0$. Hence we conclude that we have
\begin{equation}
|D^{\alpha}_x v^{r,\rho,l}_i(l,.)|\in [0,C]~\mbox{ and }|D^{\alpha}_x r^l_i(l,.)|\in [0,2C^2+1],
\end{equation}
and this implies that for the uncontrolled velocity functions that
\begin{equation}
|D^{\alpha}_x v^{\rho,l}_i(l,.)|\in [0,2C^2+1]
\end{equation}
for $|\alpha|\leq m$. Hence we need to establish the upper bound for the controlled value function in the case of an extended control function. The weaker weight of the extension $-\frac{r^{l-1}_i}{C^2}(l-1,.)$ helps. Given some $x\in {\mathbb R}^n$, if
\begin{equation}
v^{r,\rho,l-1}_i(l-1,x)\in [-C,C]~\mbox{ and }~r^{l-1}_i(l-1,x)\in \left[-2C^2-1,2C^2+1\right] 
\end{equation}
have the same sign then $-\frac{r^{l-1}_i}{C^2}(l-1,.)\in \left\lbrace -2+\frac{1}{C},2+\frac{1}{C}\right] $ and $-\frac{v^{r,\rho,l-1}_i}{C}(l-1,x)\in [-1,1]$, and for small $\rho_l$ 
(\ref{boundedimpl}) is satisfied by construction. Indeed for small $\rho_l\sim \frac{1}{C^3}$ except for the additional source term itself related to the
 term
 \begin{equation}\label{termincontrol}
\int_{l-1}^{\tau}\phi^{l}_j(s,y)G_l(\tau-s,x-y)dyds
\end{equation}
 all additional terms in the controlled equation for $v^{r,\rho,l}_i$ which are related to (\ref{termincontrol}) have a factor $\rho_l$. Hence, we observe that
\begin{equation}\label{incrementcontrol}
\begin{array}{ll}
\delta v^{r,\rho,l}_i=v^{r,\rho,l}_i(l,.)-v^{r,\rho,l-1}_i(l-1,.)\\
\\
=\sum_{k\geq 2}\delta v^{r,\rho,l,k}_i(l,.)+\int_{l-1}^{\tau}\phi^{l}_j(s,y)G_l(\tau-s,x-y)dyds+\rho_l\left(\cdots \right) \\
\\
\leq \frac{1}{C}+\sum_{k\geq 2}\delta v^{\rho,l,k}_i(l,.)+\int_{l-1}^{l}\phi^{l}_j(s,y)G_l(\tau-s,.-y)dyds\\
\\
\in \left[-C,C \right]. 
\end{array}
\end{equation}
for appropriate $C>2$. Similar for spatial derivatives of order $|\alpha|\leq m$.
Next, given some $x\in {\mathbb R}^n$, assume that
\begin{equation}
v^{r,\rho,l-1}_i(l-1,x)\in [-C,C]~\mbox{ and }~r^{l-1}_i(l-1,x)\in \left[-2C^2-1,2C^2+1\right] 
\end{equation}
have different signs. In this case we observe that the modulus of the control function decreases more that the uncontrolled value functions can grow at one time step with small time step size $\rho_l\sim \frac{1}{C^3}$, i.e., we have  
\begin{equation}
{\big |}r^l_i(l,.){\big |}-\sum_{k=1}^{\infty}\delta v^{\rho,l,k}_i(l,.)\geq \frac{1}{C},
\end{equation} 
and similar for spatial derivatives of of order $|\alpha|\leq m$. Furthermore, we observe that for small step site $\rho_l$ we still have $r^{l-1}_i(l-1,x)\in \left[-2C^2-1,2C^2+1\right]$.
Concerning generalization to the highly degenerate Navier Stokes equation model, there is only one element which we need to change in the argument above. Note that we have inductively
\begin{equation}
{\big |}D^{\alpha}_xv^{r,\rho,l-1}_i(l-1,.){\big |}\leq \frac{C}{1+|x|^q}
\end{equation}
for all $1\leq i\leq n$ and all multiindices $\alpha$ with $|\alpha|\leq m$ for $q$ as in the statement of the local contraction theorem, and by inheritance of polynomial decay and local contraction we have
\begin{equation}
{\big |}D^{\alpha}_xv^{r,\rho,l,k}_i(\tau,.){\big |}\leq \frac{2C}{1+|x|^q}
\end{equation}
for all $k\geq 1$, $\tau\in [l-1,l]$, and for all $1\leq i\leq n$ and all multiindices $\alpha$ with $|\alpha|\leq m$ for $q$ as well. 
The additional problem is that the standard estimates of the H\"{o}rmander diffusion have an additional polynomial growth factor with respect to the spatial argument $x$. This is no problem for our scheme as the control function has the term $-\left( v^{\rho,l,1}_j-v^{r,\rho,l-1}_j(l-1,.)\right)$ built in in its definition, and all other terms contain products of approximating value functions as factors which offset this additional polynomial growth factor. However, we do not have this effect for the source terms $\phi^l_i$ in our dynamic definition of the control functions $r^l_i$. Therefore we define
\begin{equation}\label{controllgen}
\begin{array}{ll}
r^{l}_j-r^{l-1}_j=-\left( v^{\rho,l,1}_j-v^{r,\rho,l-1}_j(l-1,.)\right)\\
\\
+\int_{l-1}^{\tau}\frac{2C}{1+|y|^q}\phi^{l}_j(s,y)G^l_H(\tau-s,x-y)dyds,
\end{array}
\end{equation}  
where the source term is of the the same form as in (\ref{sourceleray*}), i.e. the value function and the control function now just refer two the value functions and control functions of the general scheme. Note that the convolution is useful in this respect as the additional factor in (\ref{controllgen}) does not change the sign as we consider spatial derivatives. The argument for a global bound of the Leray projection term is then analogous as in the classical model.

\section{Proof of local contraction result}
We consider the essential case of contraction results for derivatives up to order $|\alpha|\leq 2$. The extension to order $m>2$ is straightforward.
The inheritance of polynomial decay of the local higher order correction terms and the inductive assumption of polynomial decay described above facilitates the proof of the local contraction result, because we have upper bounds of approximating value functions $v^{r,\rho,l,k}_i$ which have polynomial decay of a certain order and this leads to the simple definition of the constants $C_G$ and $C_B$ above which play a natural role in our contraction estimate via classical representations of functional increments $\delta v^{\rho,l,k}_i$. We emphasize again that the first approximation
$v^{\rho,l,1}_i$ at time step $l\geq 1$
solves an equation with data $v^{r,\rho,l-1}_i(l-1,.)$,
i.e., we consider the local construction $v^{\rho,l}_i=v^{\rho,l,1}_i+\sum_{k=2}^{\infty}\delta v^{\rho,l,k}_i$ and then we add at each time step the control function increments $\delta r^l_i$ in order to estimate the growth with respect to the time step number $l\geq 1$. This is different to a direct approach which involves the control function in the local construction. In the following we denote
\begin{equation}
v^{\rho,l,k}_i=v^{\rho,l,1}_i+\sum_{p=2}^{k}\delta v^{\rho,l,p}_i,
\end{equation}
keeping in mind that the controlled function $v^{r,\rho,l-1}_i(l-1,.)$ are part of the Cauchy problem which defines $v^{\rho,l,1}_i$.
Compared to the local contraction result for classical Navier Stokes equation models with constant viscosity for the degenerate Navier-Stokes equation models we have to consider two additional aspects. One of these aspects is the different standard a priori estimate for H\"{o}rmander densities, which includes an additional polynomial factor with respect to the spatial variables. We have to take care of this aspect for the $L^2$- and $H^1$-contraction estimates. The second new aspect is that we need a local adjoint of densities in order to deal with $H^m$ estimates for $m\geq 2$. Actually the first order estimates are essential since they are with respect to differentiable functions where the functions themselves and their first order spatial derivatives vanish at spatial infinity. Such spaces are closed, but we consider higher order Sobolev spaces as well.  We emphasize the essential differences to the classical model. For the classical model with constant viscosity the result may be obtained with weaker assumptions concerning the order of polynomial decay. We refer to our notes in \cite{KB2,KB3} for the discussion of local contraction in the case of the classical model. 
We consider the essential $|.|_{C^0\left((l-1,l), H^1\right) }$-estimates first. We have observed that the functional increments $\delta v^{\rho,l,k+1}_i=v^{\rho,l,k+1}_i-v^{\rho,l,k}_i,~1\leq i\leq n$ solve
(\ref{deltaurhok0}). Furthermore, if $G^l_H$ denotes the fundamental solution of the equation
\begin{equation}
\frac{\partial G^l_H}{\partial \tau}-\rho_l\frac{1}{2}\sum_{j=0}^mV_{j}^2G^l_H=0
\end{equation}
on the domain $[l-1,l]\times {\mathbb R}^n$, then we have the representations
 \begin{equation}\label{scalparasystlin10vhercr}
\begin{array}{ll}
\delta v^{\rho,l,1}_i(\tau,x)=\int_{{\mathbb R}^n}v^{r,\rho,l-1}_i(l-1,y)G^l_H(\tau,x;s,y)dy
-v^{\rho,l-1}_i(l-1,x)\\
\\ 
-\rho_l\int_{l-1}^{\tau}\int_{{\mathbb R}^n}V_{B}\left[v^{r,\rho,l-1}(l-1,.)\right] v^{r,\rho,l-1}_i(l-1,y)G^l_H(\tau,x;s,y)dyds+\\
\\
\rho_l\int_{l-1}^{\tau}\int_{{\mathbb R}^n}\int_{{\mathbb R}^n}\sum_{j,m=1}^n \left(c_{jm} \frac{\partial v^{r,\rho,l-1}_m}{\partial x_j}(l-1,.)\frac{\partial v^{r,\rho,l-1}_j}{\partial x_m}(l-1,.)\right) (\tau,y)\times\\
\\
\times K^{\mbox{ell}}_{n,i}(z-y)G^l_H(\tau,x;s,z)dydzds,
\end{array}
\end{equation} 
and
\begin{equation}\label{deltaurhok0hercr}
\begin{array}{ll}
\delta v^{\rho,l,k+1}_i(\tau,x)=\\
\\ 
-\rho_l\int_{l-1}^{\tau}\int_{{\mathbb R}^n}\left( V_{B}\left[v^{\rho,l,k}\right] \delta v^{\rho,l,k}_i+V_{B}\left[\delta v^{\rho,l,k}\right] v^{\rho,l,k}_i\right) (s,y)\times\\
\\
\times G^l_H(\tau,x;s,y)dyds+\rho_l\int_{l-1}^{\tau}\int_{{\mathbb R}^n}\int_{{\mathbb R}^n}K^{\mbox{ell}}_{n,i}(z-y)\times\\
\\
{\Big (} \left( \sum_{j,m=1}^n c_{jm}\left( v^{\rho,l,k}_{m,j}+v^{\rho,l,k-1}_{m,j}\right)(s,y) \right)  \delta v^{\rho,l,k}_{j,m}(s,y) {\Big)}\times\\
\\
\times G^l_H(\tau,x;s,z)dydzds.
\end{array}
\end{equation}
The representation of the first functional increment $\delta v^{\rho,l,1}_i$ shows that we loose some order of spatial polynomial decay in the first approximation step. However, in the representation of the higher order approximation increments $\delta v^{\rho,l,k}_i$ we have products of approximation functions (of lower approximation order) which implies that for the higher order approximation increments polynomial decay of a certain order is preserved if it is larger enough.
The assumption of polynomial decay in the statement of the local contraction theorem of the data $v^{r,\rho,l-1}_i(l-1,.)$ at time step $l\geq 1$ in the statement of the local contraction theorem  implies that for $m\geq 2$ for small $\rho_l>0 $ we have
\begin{equation}\label{firstincrementup}
{\big |}D^{\alpha}_xv^{r,\rho,l,1}_i{\big |}\leq \frac{1}{4}.
\end{equation}
Note here, that we may assume that $C^{l-1}$ is chosen large enough such that $C^{l-1}>1$ and (\ref{firstincrementup}) is satisfied for
\begin{equation}
\rho_{l}\leq \frac{1}{C^{l-1}}.
\end{equation}
Furthermore, we may assume w.l.o.g. that $C_B,C_G>1$.
We have observed this above in the case of the classical model and using the assumption of polynomial decay of the data we get this by a similar reasoning starting from (\ref{scalparasystlin10vhercr}).
We shall observe that for the higher order correction terms we have for $ k\geq 2$ and for appropriate $\rho_l>0$ a spatial decay
\begin{equation}
\max_{i\in \left\lbrace 1,\cdots ,n\right\rbrace} \sup_{\tau\in [l-1,l]}\sum_{|\alpha|\leq 1}{\big |}D^{\alpha}_xv^{\rho,l,k}_i(\tau,x){\big |}\leq \frac{2C^{l-1}}{1+|x|^q}
\end{equation}
for $q\geq 3\max{|\alpha|\leq 2}m_{0,\alpha,0}$ as in the statement of the local contraction theorem.
Outside a ball $B_{\epsilon}(x)$ of radius $\epsilon$ around $x$ we can estimate classical representations of increments $\delta v^{r,\rho,l,k}_i(\tau,x)$ via Kusuoka-Stroock or H\"{o}rmander estimates. Inside a local ball we have local integrability of the H\"{o}rmander density $G^l_H$ and its first order derivatives. Therefore we split up   
the fundamental solution $G^l_H$ with a partion of unity $\phi^x_{\epsilon},~(1-\phi^x_{\epsilon})$ where $\phi^x_{\epsilon}$ is supported on $B_{\epsilon}(x)$ and satisfies $\phi^x_{\epsilon}(x)=1$. Furthermore, we may choose $\phi^x_{\epsilon}\in C^{\infty}(B_{\epsilon}(x))$ with bounded derivatives (use the standard elements of partitions of unity).
We write
\begin{equation}
G^l_H=\phi^x_{\epsilon}G^l_H+(1-\phi^x_{\epsilon})G^l_H.
\end{equation}
and 
\begin{equation}
G^l_{H,i}=\phi^x_{\epsilon}G^l_{H,i}+(1-\phi^x_{\epsilon})G^l_{H,i}.
\end{equation}
We may write the integral in (\ref{deltaurhok0hercr}) accordingly for each given $x\in {\mathbb R}^n$  with two summands, and then use (\ref{apriorih0}) and the Kusuoka Stroock a priori estimates for the respective summands in order to get upper bounds. The estimate of the local integral around $x$ (supported in $B_{\epsilon}(x)$ may gives a certain constant which we absorb in the definition of $C_G$ above. Note that inductively we have
\begin{equation}
{\big |}D^{\alpha}_xv^{\rho,l-1}_i(l-1,.){\big |}\leq \frac{C^{l-1}}{1+|x|^q}
\end{equation}
for some $q\geq \max_{|\alpha|\leq 2}3m_{0,\alpha,0}+2n+2$ for all multiindices $\alpha$ with $|\alpha|\leq m$, where we know that this behavior is inherited by the higher order local approximations $v^{\rho,l,k}_i$ for $k\geq 2$. Hence the constants $C_B,C_G$ and $C_K$ are well-defined in section 2 above. 
First for $|\alpha|\leq 1$ we have
\begin{equation}\label{deltaurhok0hercr1}
\begin{array}{ll}
{\big |}D^{\alpha}_x\delta v^{\rho,l,k+1}_i{\big |}_{C^0\left( (l-1,l),L^{\infty}\right) }\leq \\
\\ 
{\big |}\rho_l\int_{{\mathbb R}^n}\left( V_{B}\left[v^{\rho,l,k}\right] \delta v^{\rho,l,k}_i+V_{B}\left[\delta v^{\rho,l,k}\right] v^{\rho,l,k}_i\right) (s,y)\times\\
\\
\times D^{\alpha}_xG^l_H(.,.;s,y)dyds{\big |}_{C^0\left( (l-1,l),L^{\infty}\right)}+{\Big |}\rho_l\int_{l-1}^{\tau}\int_{{\mathbb R}^n}\int_{{\mathbb R}^n}K^{\mbox{ell}}_{n,i}(z-y)\times\\
\\
{\Big (} \left( \sum_{j,m=1}^nc_{jm}\left( v^{\rho,l,k}_{m,j}+v^{\rho,l,k-1}_{m,j}\right)(s,y) \right)  \delta v^{\rho,l,k}_{j,m}(s,y) {\Big)}\times\\
\\
\times D^{\alpha}_xG^l_H(.,.;s,z)dydzds{\Big |}_{C^0\left( (l-1,l),L^{\infty}\right)}\\
\\
\leq {\big |}\rho_l\left( V_{B}\left[v^{\rho,l,k}\right] \delta v^{\rho,l,k}_i+V_{B}\left[\delta v^{\rho,l,k}\right] v^{\rho,l,k}_i\right) \times \\
\\
\times \left( 1+|.|^{q-2\max_{|\alpha|\leq 2}m_{0,\alpha,0}-2n-2}\right) {\big |}_{C^0\left( (l-1,l),L^{\infty}\right)}\\
\\
+{\Big |}\rho_l\int_{l-1}^{\tau}\int_{{\mathbb R}^n}K^{\mbox{ell}}_{n,i}(.-y)\times\\
\\
{\Big (} \left( \sum_{j,m=1}^n|c_{jm}|\left( v^{\rho,l,k}_{m,j}+v^{\rho,l,k-1}_{m,j}\right)(.,y) \right)  \delta v^{\rho,l,k}_{j,m}(.,y) {\Big)}\\
\\
\left( 1+|y|^{q-2\max_{|\alpha|\leq 2}m_{0,\alpha,0}-2n-2}\right){\Big |}_{C^0\left( (l-1,l),L^{\infty}\right)}
\end{array}
\end{equation}
The second term of the right side of (\ref{deltaurhok0hercr1}) has another spatial convolution with the generalized Laplacian kernel $K^{\mbox{ell}}$. However, we have
\begin{equation}
\int_{{\mathbb R}^n}\frac{1}{1+|y|^{n+2}}K^{\mbox{ell}}_{,i}(.-y)dy\in L^1
\end{equation}
for all $1\leq i\leq n$ by the Young inequality, because locally the $i$th partial spatial derivative  $K^{\mbox{ell}}_{,i}$ is in $L^1$ and outside a ball it is in $L^2$. Hence, we have
\begin{equation}\label{deltaurhok0hercr1term}
\begin{array}{ll}
{\Big |}\rho_l\int_{{\mathbb R}^n}K^{\mbox{ell}}_{n,i}(.-y)
{\Big (} \left( \sum_{j,m=1}^n|c_{jm}|\left( v^{\rho,l,k}_{m,j}+v^{\rho,l,k-1}_{m,j}\right)(s,y) \right)  \delta v^{\rho,l,k}_{j,m}(.,y
 {\Big)}\\
\\\left( 1+|y|^{q-2\max_{|\alpha|\leq 2}m_{0,\alpha,0}-2n-2}\right){\Big |}_{C^0\left( (l-1,l),L^{\infty}\right)}\\
\\
\leq {\Big |}\rho_lC_K
{\Big (} \left( \sum_{j,m=1}^n|c_{jm}|\left( v^{\rho,l,k}_{m,j}+v^{\rho,l,k-1}_{m,j}\right) \right)  \delta v^{\rho,l,k}_{j,m} {\Big)}\times\\
\\
\times\left( 1+|.|^{q-2\max_{|\alpha|\leq 2}m_{0,\alpha,0}-n}\right) {\Big |}_{C^0\left( (l-1,l),L^{\infty}\right)}
\end{array}
\end{equation}
Hence, for $|\alpha|\leq 1$ we have
\begin{equation}\label{deltaurhok0hercr1intermediate}
\begin{array}{ll}
{\big |}D^{\alpha}_x\delta v^{\rho,l,k+1}_i{\big |}_{L^{\infty}\times L^{\infty}}\leq \\
\\
\leq {\big |}\rho_lC_{G}\left( V_{B}\left[v^{\rho,l,k}\right] \delta v^{\rho,l,k}_i+V_{B}\left[\delta v^{\rho,l,k}\right] v^{\rho,l,k}_i\right)\left( 1+|.|^{q-2\max_{|\alpha|\leq 2}m_{0,\alpha,0}-2n-2}\right){\big |}_{C^0\left( (l-1,l),L^{\infty}\right)}\\
\\
+{\Big |}\rho_lC_KC_G
{\Big (} \left( \sum_{j,m=1}^n|c_{jm}|\left( v^{\rho,l,k}_{m,j}+v^{\rho,l,k-1}_{m,j}\right) \right)  \delta v^{\rho,l,k}_{j,m} {\Big)}\left( 1+|.|^{q-2\max_{|\alpha|\leq 2}m_{0,\alpha,0}-n}\right){\Big |}_{C^0\left( (l-1,l),L^{\infty}\right)}
\end{array}
\end{equation}
This means that for some constant $c(n)$ which depends only on the dimension $n$ and on the order $m\geq 2$ (which determines the number of terms involved in our estimates) we have
\begin{equation}\label{deltaurhok0hercr1}
\begin{array}{ll}
\max_{j\in \left\lbrace 1,\cdots ,n\right\rbrace }\sup_{\tau\in [l-1,l],x\in {\mathbb R}^n}{\big |}D^{\alpha}_x\delta v^{\rho,l,k+1}_i(\tau,x){\big |}\leq\\ 
\\
\rho_lc(n)\left(  2C_BC_GC^l_k+C_K\sum_{j,m=1}^n|c_{jm}|\left( C^l_k+C^l_{k-1}\right)\right)\left( 1+|.|^{q-2\max_{|\alpha|\leq 2}m_{0,\alpha,0}-n}\right)\times\\
\\
\times \max_{j\in \left\lbrace 1,\cdots ,n\right\rbrace }\sum_{|\alpha|\leq 1}\sup_{\tau\in [l-1,l],x\in {\mathbb R}^n}{\big |}D^{\alpha}_x\delta v^{\rho,l,k}_j(\tau,x){\big |}.
\end{array}
\end{equation}
This means that an upper bound for the contraction constant for 
$$\sum_{|\alpha|\leq 1}\sup_{\tau\in [l-1,l],x\in {\mathbb R}^n}\left(1+|x|^n \right) {\big |}D^{\alpha}_x\delta v^{\rho,l,k}_j(\tau,x){\big |}$$ is
\begin{equation}
\rho_lc(n)\left(  2C_BC_GC^l_k+C_K\sum_{j,p=1}^nC^m_{jp}\left( C^l_k+C^l_{k-1}\right)\right)\left(1+|x|^{q-2\max_{|\alpha|\leq 2}m_{0,\alpha,0}} \right)
\end{equation}
where we may use the constants $C^m_{jp}$ defined in the section on the statement of the contraction result. Note that $m$ denotes here the order of spatial derivatives considered and at this point $m=1$ is sufficient. For higher order estimates $m\geq 2$ has to be adapted accordingly.
Since
\begin{equation}
v^{\rho,l,k}_i=v^{\rho,l,1}_i+\sum_{p=2}^{k}\delta v^{\rho,l,k}_i
\end{equation}
and we observe that the order of spatial polynomial decay is inherited by the higher order correction terms, we know that the order of spatial decay at infinity of $v^{\rho,l,k}_i$ is less or the the same as the order of spatial decay behavior of $v^{\rho,l,1}_i$.
Hence, we have
\begin{equation}
C^l_k\lesssim C^l_1\lesssim \frac{1}{1+|.|^{q-\max_{|\alpha|\leq 2}m_{0,\alpha,0}}},
\end{equation}
and
\begin{equation}
\begin{array}{ll}
\sum_{|\alpha|\leq 1}\sup_{\tau\in [l-1,l]}{\big |}D^{\alpha}_x\delta v^{\rho,l,k}_j(\tau,.){\big |}\\
\\
\lesssim \sum_{|\alpha|\leq 1}\sup_{\tau\in [l-1,l]}{\big |}D^{\alpha}_x\delta v^{\rho,l,1}_j(\tau,.){\big |}\lesssim \frac{1}{1+|.|^{q-\max_{|\alpha|\leq 2}m_{0,\alpha,0}}}
\end{array}
\end{equation}
Hence for $|\alpha|\leq 1$ we get the contraction result for
\begin{equation}\label{contractionconstant}
\rho_l\leq \frac{1}{c(n)\left(  \left( 2C_BC_G+C_K\sum_{j,p=1}^nc_{jp}\right) 2\left( C^{l-1}+1\right)\right) }
\end{equation}

In order to estimate $\sum_{|\alpha|\leq m}\sup_{\tau\in [l-1,l],x\in {\mathbb R}^n}{\big |}D^{\alpha}_x\delta v^{\rho,l,k}_j(\tau,x){\big |}$ for $m\geq 2$ we need the local adjoint for the truncations of the densities $G^l_h$ for local estimates around the argument $x$. We may then shift one derivative to the integrand involving the approximations of the value functions of order $k$ as explained above. Generic adaption of the constant $c(n)$ (depending only on dimens on and the number of terms involved in the representations of the increments $\delta v^{\rho,l,k}_i$ and their derivatives) leads to the same constant as in (\ref{contractionconstant}). For the stronger norms we also need the following additional consideration.
For given $x\in {\mathbb R}^n$ we write
\begin{equation}
\begin{array}{ll}
G^l_H(\tau,x;s,y)=\phi^x_{\epsilon}(x)G^l_H(\tau,x;s,y)+(1-\phi^x_{\epsilon}(x))G^l_H(\tau,x;s,y)\\
\\
=\phi^x_{\epsilon}(x)
G^{l,*}_H(s,y;\tau,x)+(1-\phi^x_{\epsilon}(x))G^l_H(\tau,x;s,y),
\end{array}
\end{equation}
where for small $\epsilon >0$ we know that a local adjoint $G^{l,*}_H$ exists.
Spatial derivatives of order $\alpha$ of the summand $(1-\phi^x_{\epsilon})G^l_H$ can be estimated by the Kusuoka Stroock estimates.
This is also true for multiindices $\alpha$ with $|\alpha|\geq 2$. For the other summand $\phi^x_{\epsilon}G^l_H$ we only have local integrability for derivatives up to first order. For this summand we can use the adjoint and shift  spatial derivatives to the approximating value functions $v^{\rho,l,k}_i,~\delta v^{\rho,l,k}_i$ and $v^{\rho,l-1}_i(l-1,.),~\delta v^{\rho,l-1}_i(l-1,.)$. We may then use the representations
 \begin{equation}\label{scalparasystlin10vhercrend1}
\begin{array}{ll}
\delta v^{\rho,l,1}_i(\tau,x)=\\
\\
\int_{{\mathbb R}^n}v^{r,\rho,l-1}_i(l-1,y)\left( \phi^x_{\epsilon}(x)G^{l,*}_H(s,y;\tau,x)+(1-\phi^x_{\epsilon}(x))G^l_H(\tau,x;s,y)\right) dy\\
\\
-v^{r,\rho,l-1}_i(l-1,x)\\
\\ 
-\rho_l\int_{l-1}^{\tau}\int_{{\mathbb R}^n}V_{B}\left[v^{r,\rho,l-1}(l-1,.)\right] v^{r,\rho,l-1}_i(l-1,y)\times\\
\\
\times\left( \phi^x_{\epsilon}(x)G^{l,*}_H(s,y;\tau,x)+(1-\phi^x_{\epsilon}(x))G^l_H(\tau,x;s,y)\right) dyds+\\
\\
\rho_l\int_{l-1}^{\tau}\int_{{\mathbb R}^n}\int_{{\mathbb R}^n}\sum_{j,m=1}^n \left(c_{jm} \frac{\partial v^{r,\rho,l-1}_m}{\partial x_j}(l-1,.)\frac{\partial v^{r,\rho,l-1}_j}{\partial x_m}(l-1,.)\right) (\tau,y)\times\\
\\
\times\frac{\partial}{\partial x_i}K^{\mbox{ell}}_n(z-y)\left( \phi^x_{\epsilon}(x)G^{l,*}_H(s,y;\tau,x)+(1-\phi^x_{\epsilon}(x))G^l_H(\tau,x;s,y)\right) dydzds,
\end{array}
\end{equation} 
and
\begin{equation}\label{deltaurhok0hercrend2}
\begin{array}{ll}
\delta v^{\rho,l,k+1}_i(\tau,x)=\\
\\ 
-\rho_l\int_{l-1}^{\tau}\int_{{\mathbb R}^n}\left( V_{B}\left[v^{\rho,l,k}\right] \delta v^{\rho,l,k}_i+V_{B}\left[\delta v^{\rho,l,k}\right] v^{\rho,l,k}_i\right) (s,y)\times\\
\\
\times \left( \phi^x_{\epsilon}(x)G^{l,*}_H(s,y;\tau,x)+(1-\phi^x_{\epsilon}(x))G^l_H(\tau,x;s,y)\right)dyds\\
\\
+\rho_l\int_{l-1}^{\tau}\int_{{\mathbb R}^n}\int_{{\mathbb R}^n}K^{\mbox{ell}}_{n,i}(z-y)
{\Big (} c_{jm}\left( \sum_{j,m=1}^n\left( v^{\rho,l,k}_{m,j}+v^{\rho,l,k-1}_{m,j}\right)(s,y) \right)  \delta v^{\rho,l,k}_{j,m}(s,y) {\Big)}\times\\
\\
\times \left( \phi^x_{\epsilon}(x)G^{l,*}_H(s,y;\tau,x)+(1-\phi^x_{\epsilon}(x))G^l_H(\tau,x;s,y)\right)dydzds.
\end{array}
\end{equation}
Spatial derivatives are then treated as described. Here we may use Leibniz rule where we note that for $\tau-s>0$ we have 
\begin{equation}
\begin{array}{ll}
D^{\alpha}_x\phi^x_{\epsilon}(x)G^{l,*}_H(s,y;\tau,x)=\sum_{|\beta|\leq |\alpha|}\binom{\alpha}{\beta}D^{\alpha-\beta}\phi^x_{\epsilon}(x)D^{\beta}G^{l}_H(\tau,x;s,y)\\
\\
=\sum_{|\beta|\leq |\alpha|}\binom{\alpha}{\beta}D^{\alpha-\beta}\phi^x_{\epsilon}(x)D^{\beta}_yG^{l,*}_H(s,y;\tau,x)
\end{array}
\end{equation}
The derivatives of order $|\beta|>1$ are the shifted by partial integration. Then we can proceed as before in the case of $C^0\times H^1$-norms. 
 \footnotetext[1]{
\texttt{{kampen@wias-berlin.de, kampen@mathalgorithm.de}}.}


\end{document}